
\documentclass[reqno,11pt]{amsart}
\usepackage{amsmath,amssymb,amsfonts}

\usepackage{tikz}
\usetikzlibrary{matrix,arrows}
\usepackage{verbatim}


\usepackage[pagebackref,colorlinks,citecolor={red!50!black},linkcolor={blue!80!black}]{hyperref}

%
\renewcommand*{\backref}[1]{}
\renewcommand*{\backrefalt}[4]{%
    \scriptsize%
    {
    \ifcase #1 (\textcolor{red}{Uncited.})%
          \or (Cited\ on p.~#2)%
          \else (Cited\ on pp.~#2)%
    \fi%
    }
}

\usepackage[mathcalasscript,partialup]{kpfonts}
\usepackage[utf8]{inputenc}

\title{Unbounded algebraic derivators}


\author[L. Alonso]{Leovigildo Alonso Tarr\'{\i}o}
\address[L. A. T.]{CITMAGA\\
Departamento de Matem\'a\-ticas\\
Universidade de Santiago de Compostela\\
E-15782  Santiago de Compostela, Spain}
\email{leo.alonso@usc.es}

\author[B. \'Alvarez]{Beatriz \'Alvarez D\'iaz}
\address[B. A. D.]{CITMAGA\\
Departamento de Matem\'a\-ticas\\
Universidade de Santiago de Compostela\\
E-15782  Santiago de Compostela, Spain}
\email{beatrizalvarez.diaz@usc.es}

\author[A. Jerem\'{\i}as]{Ana Jerem\'{\i}as L\'opez}
\address[A. J. L.]{CITMAGA\\
Departamento de Matem\'a\-ticas\\
Universidade de Santiago de Compostela\\
E-15782  Santiago de Compostela, Spain}
\email{ana.jeremias@usc.es}


\subjclass[2010]{18G80 (primary); 18G35, 13D45, 20J06 (secondary)}

\date{\emph{Released}: September 13, 2023; \emph{typeset}: \today}

\theoremstyle{plain}
\newtheorem{thm}{Theorem}[section]
\newtheorem{lem}[thm]{Lemma}
\newtheorem{cor}[thm]{Corollary}
\newtheorem{prop}[thm]{Proposition}

\theoremstyle{remark}
\newtheorem*{rem}{Remark}

\theoremstyle{definition}

\newtheorem*{ack}{Acknowledgements}
\newtheorem{cosa}[thm]{}

\newtheorem*{fund}{Funding}

\numberwithin{equation}{thm}

\newcommand{\SA}{\mathsf{A}}
\newcommand{\SB}{\mathsf{B}}
\newcommand{\SC}{\mathsf{C}}

\newcommand{\SH}{\mathsf{H}}
\newcommand{\SI}{\mathsf{I}}
\newcommand{\SJ}{\mathsf{J}}
\newcommand{\SK}{\mathsf{K}}
\newcommand{\SL}{\mathsf{L}}

\newcommand{\ST}{\boldsymbol{\mathsf{T}}}

\newcommand{\CCC}{\boldsymbol{\mathsf{C}}}
\renewcommand{\D}{\boldsymbol{\mathsf{D}}}

\newcommand{\LL}{\boldsymbol{\mathsf{L}}}
\newcommand{\R}{\boldsymbol{\mathsf{R}}}
\newcommand{\A}{\mathsf{A}}

\newcommand{\es}{\boldsymbol{\mathsf{1}}}
\newcommand{\ts}{\mathsf{t}}

\newcommand{\NN}{\mathbb{N}}
\newcommand{\ZZ}{\mathbb{Z}}

\newcommand{\ia}{{\mathfrak a}}

\newcommand{\ip}{{\mathfrak p}}

\newcommand{\ggmm}{\boldsymbol{\gamma}}
\newcommand{\aall}{\boldsymbol{\alpha}}
\newcommand{\SGamma}{\mathsf{\Gamma}\!}

\newcommand{\dirlim}[1]{\begin{array}[t]{c} {\rm lim}\\[-7.5 pt]
 {\longrightarrow} \\[-7.5 pt] {\scriptstyle {#1}} \end{array}}
\newcommand{\invlim}[1]{\begin{array}[t]{c} {\rm lim}\\[-7.5 pt]
 {\longleftarrow} \\[-7.5 pt] {\scriptstyle {#1}} \end{array}}
 
 \newcommand{\shdirlim}{\underset{\longrightarrow}{\mathrm{lim}}}
 \newcommand{\shinvlim}{\underset{\longleftarrow}{\mathrm{lim}}}



\newcommand{\lto}{\longrightarrow}
\newcommand{\xto}{\xrightarrow}

\newcommand{\lot}{\longleftarrow}
\newcommand{\xot}{\xleftarrow}

\newcommand{\epi}{\twoheadrightarrow}

\newcommand{\inc}{\hookrightarrow}
\newcommand{\iso}{\mathrel{\tilde{\to}}}

\newcommand{\liso}{\mathrel{\tilde{\lto}}}
\newcommand{\losi}{\mathrel{\tilde{\lot}}}

\newcommand*{\longrightrightarrows}{
\ensuremath{%
\makebox[0pt][l]
{\raisebox{0.3ex}{\ensuremath{\longrightarrow}}}%
\raisebox{-0.3ex}{\ensuremath{\longrightarrow}}%
}}

\newcommand{\toto}{\longrightrightarrows}

\newcommand{\imp}{\Rightarrow}


\DeclareMathOperator{\Hom}{Hom}

\DeclareMathOperator{\Img}{Im}

\DeclareMathOperator{\aut}{Aut}
\DeclareMathOperator{\spec}{Spec}

\DeclareMathOperator{\h}{H}
\DeclareMathOperator{\id}{id}
\DeclareMathOperator{\iid}{\mathsf{id}}

\DeclareMathOperator{\ran}{Ran}
\DeclareMathOperator{\lan}{Lan}
\DeclareMathOperator{\cyl}{Cyl}
\newcommand{\md}{\text{-}\mathsf{Mod}}

\newcommand{\op}{\mathsf{op}}

\newcommand{\dia}{\mathsf{dia}}
\newcommand{\ddia}{\boldsymbol{\mathsf{dia}}}
\newcommand{\loc}{\mathsf{S}}
\newcommand{\dos}{\boldsymbol{\mathsf{2}}}

\DeclareMathOperator{\cat}{\mathsf{Cat}}
\DeclareMathOperator{\bigcat}{\mathsf{CAT}}

\DeclareMathOperator{\bff}{\mathbf{f}}
\DeclareMathOperator{\bgg}{\mathbf{g}}
\DeclareMathOperator{\bhh}{\mathbf{h}}


\newcommand{\PDC}{\boldsymbol{\mathcal{C}}}

\newcommand{\PDD}{\boldsymbol{\mathcal{D}}}

\newcommand{\ul}{\text{\smash{\raisebox{-1ex}{\scalebox{1.4}{$\ulcorner$}}}}}
\newcommand{\lr}{\text{\smash{{\scalebox{1.5}{$\lrcorner$}}}}}


\newcommand{\ie}{{\it i.e.~}}
\newcommand{\cfr}{{\it cf.~}}

\hyphenation{Gro-then-dieck}


\begin{document}

\begin{abstract} 
We show that the unbounded derived category of a Grothendieck category with enough projective objects is the base category of a derivator whose category of diagrams is the full 2-category of small categories. With this structure, we give a description of the localization functor associated to a specialization closed subset of the spectrum of a commutative noetherian ring. In addition, using the derivator of modules, we prove some basic theorems of group cohomology for complexes of representations over an arbitrary base ring.
\end{abstract}

\maketitle
\tableofcontents

\section*{Introduction}

In this review paper we prove that the derived category of a Grothendieck category with enough projective objects is the base category of a derivator. The word ``unbounded'' in the title is used in two senses: first, we consider unbounded complexes of objects of the initial abelian category; and second, the 2-category of diagrams on which homotopy co/limits are defined, namely, the full 2-category of small categories. So, there is no bound neither on complexes nor in the shape of the co/limit diagrams.

The point of this paper is to give a new proof of a more general theorem of Cisinski in the setting of derived categories, relying exclusively on classical methods of homological algebra and avoiding any appearance of model categories. Let us recall Cisinki's result: it shows that a complete and cocomplete model category defines a derivator by associating to every small category the localization of the diagram category by the levelwise weak equivalences \cite[Th\'eor\`eme 6.11]{IDC}. This theorem is extended in \cite{LCF} by looking at the connection between the homotopy of correspondences of small categories and the bicategory of derivators with cocontinuous morphisms. In a further work, \cite{CDer} Cisinski extends the result giving minimal conditions for a category to define a derivator (with domain the finite directed categories). A nice exposition of Cisinki's theorem may be found in \cite{bld}.

The theory of derivators was developed by Grothendieck and Heller (under the name ``homotopy theories'') in order to study homotopy phenomena ``without models''. Stable derivators may be regarded as a minimal enhancement of triangulated categories, a well-established framework to study stable homotopy. The success of triangulated categories is well-known as this concept permits the study of quotients and localizations, and it is rich enough to provide theorems like Brown representability. However, the definition has also some well-known drawbacks whose paradigm is the non functoriality of the cone.

There are several model structures in categories of unbounded complexes. A good reference for this is \cite{cisdeg}, from which it follows that Cisinki's result is applicable in this setting. It is however completely classical to establish the basic properties of derived categories by means of homological algebra without recourse to a specific choice of model structure  bypassing the full theory of homotopical algebra. The basic results on unbounded derived categories were developed by Spaltenstein \cite{S}, and expanded by B\"okstedt and Neeman \cite{BN}. In both references, only homological algebra methods (additive homotopies, cones, cylinders and resolutions) are used.

In this work we stick to these methods. This approach provides some benefits. One is that it lowers the bar to get into this theory. Another one is that the structure inherent to the derivator is made explicit in terms of simple homological constructions. In the spirit of a review paper, we give two applications of these ideas. The first one gives the description of local cohomology by Koszul complexes. It generalizes the well-known case of a closed subset of the spectrum of a commutative noetherian ring to the case of a subset stable for specialization. The second one derives some basic results in group cohomology from the basic structure of the derivator.

Let us describe then with greater detail the precise contents of this work.

The first section is devoted to recall known results casted in a language and notations convenient for our purposes, and setting the basic notations that will be used throughout the paper. The main observation is that the diagram category of complexes over a Grothendieck category with enough projectives has this property too. Therefore all such categories possess all co/limits and can be organized in a representable derivator as stated in \ref{derC}. We use it to give a construction of the derivator associated to the derived category by deriving the relevant functors. We give a detailed description of the Kan extension functors $u_*$ and $u_!$ that will be of use along the paper.

Section \ref{dos} gives the verification of the axioms of derivator, see \cite[Definition 1.5]{gr13}. The axioms (Der1) and (Der2) follow from the structure already present in complexes. Axiom (Der3) is proved by exhibiting that the derived functors of $u_*$ and $u_!$  satisfy it, as they are defined by Kan extensions. Axiom (Der4) uses an argument on preservation of $q$-projective (and $q$-injective) objects for the functors from the co/slice categories presented in Lemma \ref{qconserv}. In the next section we prove that the derivator is strong (Der5) \cite[Definition 1.8]{gr13}. For this, we make an explicit homotopy rectification of squares using basic homological tools, namely, cylinders and homotopies.

In section \ref{cuatro} we prove our main result, Theorem \ref{main}: for a Grothendieck category $\SA$ with enough projective objects, its associated derivator $\PDD_{\!\!\SA}$ is stable. A consequence of this construction is that the classical cone construction agrees with the natural cone provided by the derivator, showing its true intrinsic nature.

The following sections complete the paper by giving applications of this framework. Section \ref{cinco} gives a description of the localization functor associated to a specialization closed subset $Y$ of the spectrum of a commutative noetherian ring $A$, see Theorem \ref{mainloc}. In the case where $Y$ is closed, countable homotopy colimits \emph{a la} B\"okstedt and Neeman \cite{BN} suffice. When $Y$ is more general we need a more general diagram of Koszul complexes to extend the description. Our methods show that this approach fits naturally in the context of derivators.

Finally, in section \ref{seis} we show that the framework of the derivator provides a more general (arbitrary base ring, complexes as coefficients) and simpler approach to some basic theorems of group cohomology. The derived Frobenius reciprocity is a restatement of (Der3). Shapiro's Lemma follows again from a standard argument on preservation of $q$-projective (and $q$-injective) objects. Finally, we give a version of Lyndon-Hochschild-Serre, interpreted as a composition of derived functors that follows from 2-functoriality of $(-)_*$ and $(-)_!$, see  Theorem \ref{LHSclas}.

Let us say a few words about the prerequisites. Throughout the paper we will use the basic language from \cite{currante}, especially concerning Kan extensions. Our basic reference for derived categories will be \cite{yellow}. The construction of resolutions in taken from the ideas in \cite{BN}, made explicit in \cite{AJS}. The conventions regarding derivators are taken from \cite{gr13}.

\section{The representable derivator of complexes}

\begin{cosa} \textbf{Grothendieck categories with enough projectives.}
Let us fix the basic standing assumption. 
 Let $\SA$ be a Grothendieck category, \ie an abelian category with a generator and satisfying the AB5 condition: small filtered colimits are exact. Furthermore, we will always assume that $\SA$ possesses enough projective objects. An equivalent condition for $\SA$ to have enough projectives is that there is a set of \emph{projective} generators. These categories arise naturally as categories of modules over ``rings with several objects''. The simplest and most important case is the category $R\md$, where $R$ is a ring (which we assume unital, as usual). For readers not interested in maximal generality we recommend to assume tacitly that $\SA = R\md$ throughout the paper. 
\end{cosa}

\begin{cosa} \textbf{Categories of diagrams.}
 For any category $\SA$ and any small category $\SI$ denote as usual by $\SA^{\SI}$ the category of functors from $\SI$ to $\SA$.  We will refer to the category $\SA^{\SI}$ as the \emph{category of diagrams in $\SA$ with shape} $\SI$.
 \end{cosa}

\begin{cosa} \textbf{Diagrams of complexes.}
 We denote, as usual, the category of complexes over $\SA$ by $\CCC(\SA)$. Notice the canonical isomorphism of categories $\CCC(\SA)^{\SI} \cong \CCC(\SA^{\SI})$ that we will use  without further notice along the paper. 
 For a functor $u \colon \SI \to \SJ$, that represents a change of shape of diagrams, and a functor $F \colon \SJ \to \CCC(\SA)$ (a diagram) we associate $u^*F \colon \SI \to \CCC(\SA)$ defined by $u^*F := F \circ u$. This defines a functor
\[
u^* \colon \CCC(\SA)^{\SJ} \lto \CCC(\SA)^{\SI}
\]
from diagrams of shape ${\SJ}$ to diagrams of shape ${\SI}$. 
\end{cosa}

\begin{cosa} \textbf{The derivator of complexes.} \label{derC}
Associated to $\SA$, consider the (strict) 2-functor
\[
\PDC_{\!\!\SA} \colon \cat^\op \lto \bigcat
\]
defined by $\PDC_{\!\!\SA}(\SI) := \CCC(\SA^{\SI})$, $\PDC_{\!\!\SA }(u) := u^*$ and similarly for natural transformations. Using the notation given in \cite[Example 1.2 (1)]{gr13} this is the representable prederivator $\PDC_{\!\!\SA } = y(\CCC(\SA))$.

A Grothendieck category $\SA$ has all (small) limits and colimits, and so does its associated category of complexes $\CCC(\SA)$. This property gives $\PDC_{\!\!\SA }$ the structure of derivator. Next we spell out the relevant structure. Denote by $\es$ the terminal category. Let us describe first the action for the canonical functor $c \colon \SI \to \es$. Note that $c^*$ is the constant system indexed by $\SI$ and we have the adjunctions $\shdirlim \dashv c^* \!\dashv \shinvlim{}$ as in \cite[X.1]{currante} (where $c^*$ is denoted $\Delta$)
\[
\begin{tikzpicture}
      \node (G) {$\PDC_{\!\!\SA }(\SI)$};  
      \node[node distance=3cm, right of = G] (H) {$\PDC_{\!\!\SA }(\es)$};
      \draw[->, bend left=30] (G) to node [above] {\footnotesize $\shdirlim{}$} (H);
      \draw[->, bend right=30] (G) to node [below] {\footnotesize $\shinvlim{}$} (H);
      \draw[<-,] (G) to node [above] {\footnotesize $c^*$} (H);
\end{tikzpicture}
\]
In general, let $u \colon \SI \to \SJ$ be a functor and let
\[
u^* \colon \CCC(\SA)^{\SJ} \lto \CCC(\SA)^{\SI}
\]
be its associated natural transformation. This functor is interpreted as
\[
u^* \colon \PDC_{\!\!\SA }(\SJ) \lto \PDC_{\!\!\SA }(\SI)
\]
An alternative notation for $u^*$ might be $\PDC_{\!\!\SA }(u)$ but we will stick to the usual one.
\end{cosa}

\begin{cosa} \label{expdesadj} 
\textbf{Adjoints to the change of shape functor.}
As before, let $u \colon \SI \to \SJ$ be a functor between small categories. The functor $u^* \colon \PDC_{\!\!\SA }(\SJ) \to \PDC_{\!\!\SA }(\SI)$ possesses an adjoint at either side that may be depicted as:
\[
\begin{tikzpicture}
      \node (G) {$\PDC_{\!\!\SA }(\SI)$};  
      \node[node distance=3cm, right of = G] (H) {$\PDC_{\!\!\SA }(\SJ)$};
      \draw[->, bend left=30] (G) to node [above] {\footnotesize $u_!$} (H);
      \draw[->, bend right=30] (G) to node [below] {\footnotesize $u_*$} (H);
      \draw[<-,] (G) to node [above] {\footnotesize $u^*$} (H);
\end{tikzpicture}
\]
representing a chain of adjunctions $u_! \dashv u^* \dashv u_*$. Specifically, for $F \in \PDC_{\!\!\SA }(\SI)$ these functors are given by the Kan extensions $u_! F = \lan_u F$ and, dually, $u_* F = \ran_u F$, notation as in \cite[X.3]{currante}. It follows from the definition of Kan extensions that the adjunctions hold.

Let us make these definitions more explicit with the help of co/limits. 

For $j \in \SJ$, denote by $\SI/j$ the relative slice category. The objects of $\SI/j$ are pairs  $(i, u(i) \to j)$  with $u(i) \to j$ a morphism in $\SJ$. An arrow from $(i, u(i) \to j)$ to $(i', u(i') \to j)$ is a morphism $i \to i'$ such that $u(i) \to u(i')$ commutes with the structural arrows to $j$. Denote by $\pi_j \colon \SI/j \to \SI$ the forgetful functor that takes $(i, u(i) \to j)$ into $i$ and analogously for arrows.

The functor
\[
u_! \colon \CCC(\SA)^{\SI} \lto \CCC(\SA)^{\SJ}
\]
is defined for $j \in \SJ$ by
\begin{equation} \label{uadmirado}
 u_!(F)(j) := \dirlim{\SI/j} (F \circ \pi_j).
\end{equation}
The functoriality of $u_!(F)$ follows from the existence of the commutative diagram
\[
\begin{tikzpicture}
      \draw[white] (0cm,1.25cm) -- +(0: \linewidth)
      node (G) [black, pos = 0.4] {$\SI/j$}
      node (H) [black, pos = 0.6] {$\SI/j'$};
      \draw[white] (0cm,0.25cm) -- +(0: \linewidth)
      node (E) [black, pos = 0.5] {$\SI$};
      \draw [->] (G) -- (H) node[above, midway, scale=0.75]{$\SI/\beta$};
      \draw [->] (G) -- (E) node[auto, swap, midway, scale=0.75]{$\pi_j$};
      \draw [<-] (E) -- (H) node[auto, swap, midway, scale=0.75]{$\pi_{j'}$};  
\end{tikzpicture}
\]
associated to an arrow $\beta \colon j \to j'$ in $\SJ$, where $\SI/\beta \colon \SI/j \to \SI/j'$ is the functor  that takes an object $(i, u(i) \to j)$ to the composition $(i, u(i) \to j \to j')$, so we have that 
\[
 u_!(F)(\beta) \colon \dirlim{\SI/j} (F \circ \pi_j) \lto
\dirlim{\SI/j'} (F \circ \pi_{j'}) 
\]
is induced by the universal property of colimits. Indeed, notice that it holds that $F \circ \pi_j = F \circ \pi_{j'} \circ \SI/\beta$.

Dually, let us define $j\backslash\SI$, the relative coslice category. Its objects are pairs $(i, j \to u(i))$ with $j \to u(i)$ a morphism in $\SJ$. Its arrows correspond to morphisms $i \to i'$ such that $u(i) \to u(i')$ commutes with the structural maps from $j$. Let $\varpi_j \colon j\backslash\SI \to \SI$ the forgetful functor taking $(i, j \to u(i))$ to $i$ and similarly for arrows.

The functor
\[
u_* \colon \CCC(\SA)^{\SI} \lto \CCC(\SA)^{\SJ}
\]
is defined, for $j \in \SJ$, by
\begin{equation} \label{uestrella}
 u_*(F)(j) := \invlim{j\backslash\SI} (F \circ \varpi_j)
\end{equation}
The functoriality of $u_*(F)$ follows (as in the previous case) from the commutative diagram
\[
\begin{tikzpicture}
      \draw[white] (0cm,1.25cm) -- +(0: \linewidth)
      node (G) [black, pos = 0.4] {$j'\backslash\SI$}
      node (H) [black, pos = 0.6] {$j\backslash\SI$};
      \draw[white] (0cm,0.25cm) -- +(0: \linewidth)
      node (E) [black, pos = 0.5] {$\SI$};
      \draw [->] (G) -- (H) node[above, midway, scale=0.75]{$\beta\backslash\SI$};
      \draw [->] (G) -- (E) node[auto, swap, midway, scale=0.75]{$\varpi_{j'}$};
      \draw [<-] (E) -- (H) node[auto, swap, midway, scale=0.75]{$\varpi_j$};  
\end{tikzpicture}
\]
associated to an arrow $\beta \colon j \to j'$,	 where $\beta\backslash\SI \colon j'\backslash\SI \to j\backslash\SI$ is a functor that takes an object $(i, j' \to u(i))$ to the composition $(i, j \to j' \to u(i))$. The morphism $u_*(F)(\beta)$ is induced by $\beta\backslash\SI$ using the universal property of limits. The construction is dual to the previous one.
\end{cosa}


\section{The basic axioms}\label{dos}

\begin{cosa}
We keep denoting by $\SA$  a Grothendieck category with enough projective objects. From the  category of complexes in $\SA$ we obtain the derived category by inverting quasi-isomorphisms and denote it by $\D(\SA)$. For the basics and notations on these categories we follow \cite{vtc}.  Using this construction, let us introduce another prederivator as a strict 2-functor, the \emph{homological derivator} of $\SA$ as follows:
\[
\PDD_{\!\!\SA } \colon \cat^\op \lto \bigcat
\]
defined over objects by $\PDD_{\!\!\SA }(\SI) := \D(\SA^{\SI})$ and over functors $u \colon \SI \to \SJ$ by taking $\PDD_{\!\!\SA }(u) := u^*$, where the functor $u^*$ is precomposing with $u$ as before. Notice that $u^*$ extends to the corresponding derived categories because $u^*$ is an exact functor. Similarly for natural transformations. 

\end{cosa}

In what follows, we check that $\PDD_{\!\!\SA }$ is actually a derivator. The first two properties are straightforward observations.

\begin{prop}\label{der1}
 The 2-functor $\PDD_{\!\!\SA }$ takes coproducts into products. In other words, the axiom \emph{(Der1)} in \cite{gr13} is satisfied.
\end{prop}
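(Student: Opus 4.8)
The plan is to reduce the statement for $\PDD_\SA$ to the analogous fact for the underlying category of complexes, which is elementary. Recall that $\PDD_\SA(\SI) = \D(\SA^\SI)$, and that for a family of small categories $\{\SI_k\}_{k \in K}$ the coproduct $\coprod_k \SI_k$ in $\cat$ satisfies $\SA^{\coprod_k \SI_k} \cong \prod_k \SA^{\SI_k}$ as abelian categories, with the canonical inclusions $\iota_k \colon \SI_k \hookrightarrow \coprod_k \SI_k$ inducing the projection functors $\iota_k^*$ on diagram categories. Passing to complexes, $\CCC(\SA^{\coprod_k \SI_k}) \cong \prod_k \CCC(\SA^{\SI_k})$, and this identification is compatible with the $\iota_k^*$. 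So (Der1) holds at the level of $\PDC_\SA$ on the nose.

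Next I would observe that quasi-isomorphisms in $\CCC(\SA^{\coprod_k \SI_k}) \cong \prod_k \CCC(\SA^{\SI_k})$ are detected objectwise over $K$: a morphism $(f_k)_k$ is a quasi-isomorphism exactly when each $f_k$ is, because cohomology in a product category is computed componentwise (cohomology commutes with the relevant products here, as $K$ is just a set and the product is a product of abelian categories). Therefore the localization of $\prod_k \CCC(\SA^{\SI_k})$ at the componentwise quasi-isomorphisms is equivalent to $\prod_k \D(\SA^{\SI_k})$. This is a standard fact about Verdier (or Gabriel--Zisman) localization of a product of categories at a product of multiplicative systems; the calculus of fractions is performed componentwise, and a roof over $K$ is exactly a family of roofs. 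Hence the canonical functor
\[
\PDD_\SA\Bigl(\coprod_k \SI_k\Bigr) = \D\Bigl(\SA^{\coprod_k \SI_k}\Bigr) \lto \prod_k \D(\SA^{\SI_k}) = \prod_k \PDD_\SA(\SI_k),
\]
induced by the family $(\iota_k^*)_k$, is an equivalence of categories, which is exactly the assertion of (Der1).

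The only point requiring a word of care — and the closest thing to an obstacle — is the interchange of cohomology with products over the index set $K$, and the componentwise calculus of fractions; both are routine because $K$ is a discrete index set and products of abelian categories are computed levelwise, so no exactness subtleties arise (unlike for a genuine limit over a nontrivial shape). I would also note that the case $K = \emptyset$ gives the empty product, i.e. the terminal (zero) category, recovering the condition that $\PDD_\SA$ of the empty category be trivial, so nothing special is needed there. Everything else is a formal unwinding of definitions, so the proof is short.
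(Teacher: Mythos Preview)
Your proof is correct and follows essentially the same approach as the paper: both reduce to the representable case $\PDC_\SA$, observe that quasi-isomorphisms in the product are detected componentwise, and conclude that the equivalence descends to the derived categories. The paper's argument is slightly terser, simply asserting that the equivalence of complex categories ``preserves quasi-isomorphisms'' and hence induces the desired equivalence after localization, without spelling out the componentwise calculus of fractions or the empty case.
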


\begin{proof}
The 2-functor $\PDC_{\!\!\SA }$ sends coproducts into products by representability. For any family of small categories $\{\SI_{\lambda}\}_{\lambda \in \Lambda}$, the equivalence 
\[
\CCC\left(\A^{\coprod_{\lambda \in \Lambda} \SI_{\lambda}}\right)  \cong 
\prod_{\lambda \in \Lambda}  \CCC\left(\A^{\SI_{\lambda}}\right)
\]
 preserves quasi-isomorphisms therefore induces an equivalence
 \[
\D\left(\A^{\coprod_{\lambda \in \Lambda} \SI_{\lambda}}\right)  \cong 
\prod_{\lambda \in \Lambda}  \D\left(\A^{\SI_{\lambda}}\right)
\]
as wanted.
\end{proof}

\begin{cosa}\label{funnynot1}
 Let $\SI$ be a small category and $i \in \SI$ an object, otherwise thought as a functor $i \colon \es \to \SI$. It defines a functor $i^* \colon  \PDD_{\!\!\SA }(\SI) \to \PDD_{\!\!\SA }(\es)$. Given $F, G \in \PDD_{\!\!\SA }(\SI)$ and a morphism $f \colon F \to G$ in $\PDD_{\!\!\SA }(\SI)$ denote
$F_i := i^*F$ and $f_i := i^* f$.
\end{cosa}

\begin{prop}\label{der2}
A morphism $f \colon F \to G$ in $\PDD_{\!\!\SA }(\SI)$ is an isomorphism if, and only if $f_i \colon F_i \to G_i$  is an isomorphism in $\PDD_{\!\!\SA }(\es)$ for every $i \in \SI$. In other words, the axiom \emph{(Der2)} in \cite{gr13} is satisfied.
\end{prop}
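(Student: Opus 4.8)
The plan is to reduce the statement to the corresponding fact about ordinary derived categories, namely that a morphism of complexes over a Grothendieck category is a quasi-isomorphism if and only if it induces isomorphisms on all cohomology objects, combined with the observation that cohomology can be computed pointwise. First I would recall that for any small category $\SI$, the canonical isomorphism of categories $\CCC(\SA^\SI) \cong \CCC(\SA)^\SI$ already recorded in the excerpt means that a morphism $f$ in $\CCC(\SA^\SI)$ is the same thing as a compatible family of morphisms of complexes, and that for each object $i \in \SI$ the functor $i^* \colon \CCC(\SA^\SI) \to \CCC(\SA)$ is simply evaluation at $i$, hence exact and commuting with the formation of cohomology: $\h^n(F_i) = \h^n(F)_i = (\h^n F)(i)$ for the cohomology presheaf $\h^n F \in \SA^\SI$.

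Next I would assemble the equivalences. One direction is trivial: if $f$ is an isomorphism in $\PDD_\SA(\SI) = \D(\SA^\SI)$, then since each $i^*$ descends to an exact functor $\D(\SA^\SI) \to \D(\SA)$ (as noted when $\PDD_\SA$ was introduced), $f_i = i^* f$ is an isomorphism in $\D(\SA)$ for every $i$. For the converse, suppose $f_i$ is an isomorphism in $\D(\SA)$ for every $i \in \SI$; equivalently $f_i$ is a quasi-isomorphism, i.e. $\h^n(f_i) \colon \h^n(F_i) \to \h^n(G_i)$ is an isomorphism in $\SA$ for all $n \in \ZZ$ and all $i \in \SI$. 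By the pointwise computation of cohomology this says precisely that $\h^n(f) \colon \h^n(F) \to \h^n(G)$ is a morphism in $\SA^\SI$ which is an isomorphism objectwise; since isomorphisms in a functor category are detected objectwise, $\h^n(f)$ is an isomorphism in $\SA^\SI$ for every $n$. Hence $f$ is a quasi-isomorphism in $\CCC(\SA^\SI)$ and therefore an isomorphism in $\D(\SA^\SI) = \PDD_\SA(\SI)$.

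The only point requiring a modicum of care is that the notion of isomorphism in the derived category is the right one to test: a morphism in $\D(\SA^\SI)$ represented by a roof $F \xot G' \to G$ is invertible if and only if both legs are quasi-isomorphisms, and a morphism coming from an honest chain map $f \colon F \to G$ is invertible in $\D$ if and only if $f$ is a quasi-isomorphism — this is standard (see \cite{yellow}) and holds for the Grothendieck category $\SA^\SI$ just as for $\SA$. So there is no real obstacle here; the content of the proof is entirely the exactness of evaluation functors together with the fact that both quasi-isomorphisms and isomorphisms in a diagram category are detected pointwise. I would therefore keep the write-up short, essentially citing \cite[Example 1.6]{gr13} or the analogous elementary fact for derived categories of module-valued diagrams, and spelling out only the pointwise-cohomology identification.

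\begin{proof}
Recall the canonical identification $\CCC(\SA^\SI) = \CCC(\SA)^\SI$, under which, for $i \in \SI$, the functor $i^*$ is evaluation at $i$. Since evaluation is exact, it commutes with cohomology: for $F \in \CCC(\SA^\SI)$ and $n \in \ZZ$ one has $\h^n(F_i) = (\h^n F)(i)$, where $\h^n F \in \SA^\SI$. Also, as noted above, $i^*$ descends to an exact functor $\D(\SA^\SI) \to \D(\SA)$.

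If $f \colon F \to G$ is an isomorphism in $\PDD_\SA(\SI) = \D(\SA^\SI)$, then applying the functor $i^*$ yields that $f_i$ is an isomorphism in $\D(\SA) = \PDD_\SA(\es)$ for every $i \in \SI$.

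Conversely, assume $f_i$ is an isomorphism in $\D(\SA)$ for every $i \in \SI$. Then each $f_i$ is a quasi-isomorphism, so $\h^n(f_i) \colon \h^n(F_i) \to \h^n(G_i)$ is an isomorphism in $\SA$ for all $n \in \ZZ$ and all $i \in \SI$. By the pointwise computation of cohomology this means that the morphism $\h^n(f) \colon \h^n F \to \h^n G$ in $\SA^\SI$ is an isomorphism at every object $i$, hence an isomorphism in $\SA^\SI$, for every $n$. Therefore $f$ is a quasi-isomorphism in $\CCC(\SA^\SI)$ and thus an isomorphism in $\D(\SA^\SI) = \PDD_\SA(\SI)$.
\end{proof}
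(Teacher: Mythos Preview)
Your proposal is correct and follows essentially the same route as the paper: both reduce to the fact that a morphism of diagrams of complexes is a quasi-isomorphism precisely when it is so pointwise. The only cosmetic difference is that the paper explicitly says ``after taking appropriate resolutions we may represent any $f$ by an actual morphism of diagrams of complexes'' inside the proof, whereas you discuss this reduction in your preamble but then silently assume it in the formal argument (writing ``each $f_i$ is a quasi-isomorphism'' presupposes $f$ is already a chain map); it would be cleaner to move that one sentence into the proof itself.
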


\begin{proof}
It is clear that if $f$ is an isomorphism, then all the $f_i$ are. Conversely, after taking appropriate resolutions we may represent any $f \colon F \to G$ in $\D(\SA^\SI)$  by an actual morphism of diagrams of complexes. In this case, if all the $f_i$ are quasi-isomorphisms then so is $f$, whence the conclusion. 
\end{proof}

\begin{cosa}
Let  $u \colon \SI \to \SJ$ be a functor of small categories. By virtue of \cite[Proposition 4.3]{AJS}, every object in $\PDC_{\!\!\SA }(\SI)$ has a $q$-projective resolution because $\SA^\SI$ is a Grothendieck category with a projective generator (because so is $\SA$). The existence of $\LL{}u_!$, the left derived functor of $u_!$, follows then from (the dual of) \cite[Proposition 2.2.6.]{yellow}. Similarly $\R{}u_*$, the right derived functor of  $u_*$, exists thanks to the existence of $q$-injective resolutions in $\PDC_{\!\!\SA }(\SI)$, by \cite[Theorem 5.4]{AJS}. Recall that a $\Delta$-adjunction is simply an adjunction where the unit, or equivalently the counit, is a $\Delta$-functor, see \cite[Lemma-Definition 3.3.1]{yellow}
\end{cosa}

\begin{prop}\label{der3}
For a functor $u \colon \SI \to \SJ$, we have a chain of $\Delta$-adjunctions 
\[
\LL{}u_! \dashv u^* \!\dashv \R{}u_*
\] that is depicted in the following diagram:
\[
\begin{tikzpicture}
      \node (G) {$\PDD_{\!\!\SA }(\SI)$};  
      \node[node distance=3cm, right of = G] (H) {$\PDD_{\!\!\SA }(\SJ)$};
      \draw[->, bend left=30] (G) to node [above] {\footnotesize $\LL{}u_!$} (H); 
      \draw[->, bend right=30] (G) to node [below] {\footnotesize $\R{}u_*$} (H);
      \draw[<-,] (G) to node [above] {\footnotesize $u^*$} (H);
\end{tikzpicture}
\]
In other words, the axiom \emph{(Der3)} in \cite{gr13} is satisfied.
\end{prop}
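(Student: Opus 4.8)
The plan is to derive (Der3) from the underived adjunction $u_!\dashv u^*\dashv u_*$ of \ref{expdesadj} together with the standard machinery for deriving an adjoint pair of functors between triangulated categories. First I would recall from the excerpt that $\PDC_\SA(\SI)=\CCC(\SA^\SI)$ has enough $q$-projectives (by \cite[Proposition 4.3]{AJS}, since $\SA^\SI$ is Grothendieck with a projective generator) and enough $q$-injectives (by \cite[Theorem 5.4]{AJS}), so the left derived functor $\LL u_!$ and the right derived functor $\R u_*$ exist as $\Delta$-functors on $\D(\SA^\SI)$; these are computed by $\LL u_!F=u_!P$ for a $q$-projective resolution $P\xrightarrow{\sim}F$ and $\R u_*G=u_*I$ for a $q$-injective resolution $G\xrightarrow{\sim}I$. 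The functor $u^*$ is exact, hence descends to $\D(\SA^\SI)\to\D(\SA^\SJ)$ (and in the other direction) without any derivation, and it is automatically a $\Delta$-functor.

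The key step is then to show that the adjunctions are inherited at the level of derived categories. For $\LL u_!\dashv u^*$: given $F\in\PDD_\SA(\SI)$ and $G\in\PDD_\SA(\SJ)$, choose a $q$-projective resolution $P\xrightarrow{\sim}F$ in $\CCC(\SA^\SI)$ and a $q$-injective resolution $G\xrightarrow{\sim}I$ in $\CCC(\SA^\SJ)$. Since $u^*$ sends quasi-isomorphisms to quasi-isomorphisms, $\Hom_{\D(\SA^\SJ)}(\LL u_!F,G)=\Hom_{\D(\SA^\SJ)}(u_!P,I)$, and because $u_!P$ is $q$-projective (this is the crucial preservation statement — see below) and $I$ is $q$-injective, this Hom in the derived category is computed by chain-homotopy classes of maps, $\Hom_{\K(\SA^\SJ)}(u_!P,I)$. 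The underived adjunction $u_!\dashv u^*$ on complexes is additive and compatible with homotopies, so $\Hom_{\K(\SA^\SJ)}(u_!P,I)\cong\Hom_{\K(\SA^\SI)}(P,u^*I)$, and finally $u^*I$ is $q$-injective (again a preservation statement, or simply: $u^*$ has the exact left adjoint $u_!$, hence preserves $q$-injectives) and $P$ is $q$-projective, so this equals $\Hom_{\D(\SA^\SI)}(P,u^*I)=\Hom_{\D(\SA^\SI)}(F,u^*G)$. Naturality in $F$ and $G$ is routine. The adjunction $u^*\dashv\R u_*$ is proved by the dual argument, using that $u^*$ has the exact right adjoint $u_*$, hence preserves $q$-projectives, and that $u_*I$ is $q$-injective. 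That the unit and counit are $\Delta$-transformations — so that these are $\Delta$-adjunctions in the sense of \cite[Lemma-Definition 3.3.1]{yellow} — follows because all the functors involved are $\Delta$-functors and the adjunction isomorphisms above are visibly compatible with the triangulated structure; alternatively one invokes the general principle that an adjunction between triangulated categories in which one of the two functors is triangulated forces the other to be triangulated and the (co)unit to be a morphism of triangulated functors.

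I expect the main obstacle to be the preservation claim that $u_!$ sends $q$-projective complexes in $\CCC(\SA^\SI)$ to $q$-projective complexes in $\CCC(\SA^\SJ)$, which is precisely what makes $\LL u_!$ computable by $q$-projectives and what allows the derived-category Hom to collapse to homotopy classes; the dual claim for $\R u_*$ and $q$-injectives under $u_*$ is the other half. These are exactly the kind of statements the excerpt flags will be handled by Lemma \ref{qconserv} on co/slice categories; in the proof I would simply cite that lemma (or the adjoint-functor observation that a functor with an exact right, resp. left, adjoint preserves $q$-projectives, resp. $q$-injectives) and not reprove it here. The remaining bookkeeping — checking naturality of the adjunction isomorphism and that composing with resolution quasi-isomorphisms does not affect anything — is routine and I would dispatch it in one or two sentences.
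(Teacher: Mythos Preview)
Your approach is different from the paper's and essentially sound, but it contains two incorrect claims and one misattribution that you should fix.

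\textbf{The errors.} You write that $u^*$ preserves $q$-injectives because ``$u^*$ has the exact left adjoint $u_!$'', and dually that $u^*$ preserves $q$-projectives because $u_*$ is exact. Neither $u_!$ nor $u_*$ is exact in general: they are pointwise Kan extensions, computed as (co)limits over (co)slice categories, and such (co)limits need not be exact. Fortunately you do not need these claims. In your chain
\[
\Hom_{\D(\SJ)}(u_!P,I)=\Hom_{\K(\SJ)}(u_!P,I)\cong\Hom_{\K(\SI)}(P,u^*I)=\Hom_{\D(\SI)}(P,u^*I),
\]
the first equality holds because $I$ is $q$-injective (or, independently, because $u_!P$ is $q$-projective, which \emph{is} true since $u^*$ is exact), and the last because $P$ is $q$-projective. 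The $q$-injectivity of $u^*I$ is never used. So simply delete those two sentences; the argument survives intact. In fact you can drop the resolution $G\to I$ altogether and run the chain with $G$ in place of $I$, using only that $u_!P$ is $q$-projective.

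\textbf{The misattribution.} Lemma~\ref{qconserv} is about the functors $\pi_j^*$ and $\varpi_j^*$ from (co)slice categories, used later in the proof of (Der4); it says nothing about $u_!$ or $u_*$. The preservation statement you actually need, that $u_!$ preserves $q$-projectives, follows immediately from the exactness of its right adjoint $u^*$, as you note parenthetically.

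\textbf{Comparison with the paper.} The paper does not go through Hom-set bijections at all. It constructs the unit $\eta_!$ and counit $\varepsilon_!$ directly from the underived unit and counit by inserting $q$-projective resolutions, and then verifies the triangle identities by drawing two commutative diagrams that reduce them to the underived triangle identities; the adjunction $u^*\dashv\R u_*$ is handled dually. This route avoids any preservation statement entirely. Your Hom-set approach is a perfectly standard alternative and arguably shorter once cleaned up; the paper's approach has the advantage of making the unit and counit explicit, which is useful when one later needs to manipulate them (as in the construction of Beck--Chevalley maps).
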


\begin{proof}
It is enough to construct the unit and counit of the adjunction and check that they satisfy the triangle identities \cite[Ch. 4, Theorem 2]{currante}.
To establish the adjunction $\LL{}u_! \dashv u^*$ we need to define the unit and counit
\[
\eta_! \colon \iid \lto u^* \LL{}u_! \quad \varepsilon_! \colon \LL{}u_! u^* \lto \iid 
\]
and check they satisfy the so-called triangle identities. These maps will arise from their underived versions. Indeed, by considering a $q$-projective resolution $P_{u^*F} \iso u^*F$ for $F \in \PDD_{\!\!\SA }(\SJ)$, define the counit map $\varepsilon_!$ as the compostion
\[
\varepsilon_! \colon 
\LL{}u_! u^* F \liso u_! P_{u^*F} \lto u_! u^* F \overset{\varepsilon_{\CCC}}{\lto} F
\]
where $\varepsilon_{\CCC} \colon u_!u^* \to \iid$ denotes the underived counit. Analogously define
\[
\eta_! \colon 
G \losi P_G \overset{\eta_{\CCC}}\lto u^* u_! P_G \liso u^*\LL{}u_! G
\]
where $\eta_{\CCC} \colon \iid \to u^*u_!$ denotes the underived unit. Recall that $u^*$ preserves quasi-isomorphisms.

We have to check that the following compositions 
\begin{align}
u^* \xto{\eta u^*}\,      &u^* \LL{}u_! u^*   \xto{u^* \varepsilon} u^* 
\label{triuno}\\
\LL{}u_! \xto{\LL{}u_! \eta}\, &\LL{}u_! u^* \LL{}u_! \xto{\LL{}u_! \varepsilon} \LL{}u_! \label{tridos}
\end{align}
are identities. For this matter, let $F \in \PDD_{\!\!\SA }(\SJ)$ and $G \in \PDD_{\!\!\SA }(\SI)$. Choose $q$-projective resolutions $P_F \iso F$ and $P_G \iso G$. We will see first that the composition \eqref{triuno} is the identity. Let $P_{u^*F} \iso u^*F$ be a $q$-projective resolution. Consider the commutative diagram
\[
\begin{tikzpicture}
\matrix (m) [matrix of math nodes,
row sep=2.5em, column sep=3em,
text height=1.5ex, text depth=0.25ex]{
u^*F      & u^*u_!u^*F      & u^*F \\
u^*P_F    & u^*u_!P_{u^*F}  &  \\
u^*F      & u^*\LL{}u_!u^*F & u^*F \\          
};
\path[->,font=\scriptsize,>=angle 90]
(m-1-1) edge node[above] {via $\eta_{\CCC}$} (m-1-2)  
(m-2-1) edge node[above] {via $\eta_{\CCC}$} (m-2-2)
        edge node[left] {$\wr$} (m-1-1)
        edge node[left] {$\wr$} (m-3-1)
(m-2-2) edge node[left] {$\wr$} (m-3-2)
        edge node[left] {$can$} (m-1-2)
(m-1-2) edge node[above] {via $\varepsilon_{\CCC}$} (m-1-3)
(m-3-1) edge node[below] {via $\eta_!$} (m-3-2)
(m-3-2) edge node[below] {via $\varepsilon_!$} (m-3-3);
\draw [double] (m-1-3) to (m-3-3);
\draw [double] (m-1-1) to[bend right=45] (m-3-1);
\end{tikzpicture}
\]
The top row is the identity corresponding to the underived adjunction thus the bottom, which is \eqref{triuno} applied to $F$, also is.

For \eqref{tridos}, let $P_G \liso G$ be a $q$-projective resolution.  The diagram
\[
\begin{tikzpicture}
\matrix (m) [matrix of math nodes,
row sep=2.5em, column sep=3em,
text height=1.5ex, text depth=0.25ex]{
u_!P_G     & u_!u^*u_!P_G           & u_!P_G \\
           & u_!P_{u^*u_!P_G}       &  \\
\LL{}u_!G  & \LL{}u_!(u^*\LL{}u_!G) & \LL{}u_!G \\          
};
\path[->,font=\scriptsize,>=angle 90]
(m-1-1) edge node[above] {via $\eta_{\CCC}$} (m-1-2)
        edge node[left] {$\wr$} (m-3-1)
(m-1-2) edge node[above] {via $\varepsilon_{\CCC}$} (m-1-3)
(m-2-2) edge node[left] {$can$} (m-1-2)
        edge node[left] {$\wr$} node[right] {$\rho$} (m-3-2)
(m-1-3) edge node[right] {$\wr$} (m-3-3)
(m-3-1) edge node[below] {via $\eta_!$} (m-3-2)
(m-3-2) edge node[below] {via $\varepsilon_!$} (m-3-3);
\end{tikzpicture}
\]
is commutative. The vertical quasi-isomorphism $\rho$ is the composition 
\[
u_!P_{u^* u_!P_G} \liso \LL{}u_!(u^* u_!P_G) \liso \LL{}u_!(u^* \LL{}u_!G)
\]
where the first isomorphism is defined through a $q$-projective resolution $P_{u^* u_! P_G} \iso u^* u_!P_G$. It follows that the top row is the identity and so is the bottom one.

Let us change the side of the structure. We have to construct adjunction $u^* \dashv \R{}u_*$ and we will do it by giving its unit and counit. We want thus to define natural morphisms
\[
\eta_* \colon \iid \lto \R{}u_*u^* \quad \varepsilon_* \colon u^*\R{}u_* \lto \iid 
\]
Let $G \iso \SI_G$ be a $q$-injective resolution of $G \in \PDD_{\!\!\SA }(\SI)$. Let $\varepsilon_*$ be the composite
\[
\varepsilon_* \colon 
u^*\R{}u_*G \liso u^*u_*\SI_G \overset{\varepsilon_{\CCC}}{\lto} \SI_G \losi G
\]
where $\varepsilon_{\CCC} \colon u_*u^* \to \iid$ denotes the underived counit. Let $F \in \PDD_{\!\!\SA }(\SJ)$ and $u^*F \iso \SI_{u^*F}$ a $q$-injective resolution.
The composed morphism 
\[
\eta_* \colon
F \overset{\eta_{\CCC}}\lto u_*u^*F  \liso u_*\SI_{u^*F} \liso \R{}u_*u^* F
\]
defines the natural transformation $\eta_*$.

Now we have to check that the following compositions 
\begin{align}
u^* \xto{u^*\eta}\, &u^* \R{}u_*u^* \xto{\varepsilon u^*} u^* \label{tritres}\\
\R{}u_* \xto{\eta\R{}u_*}\, &\R{}u_*u^*\R{}u_* \xto{\R{}u_*\varepsilon} \R{}u_* \label{tricuatro}
\end{align}
are the identity. 
This fact follows from arguments dual to the ones used for the 
 identities \eqref{triuno} and \eqref{tridos}. We leave the details to the reader.

Finally, by \cite[Lemma-Definition 3.3.1]{yellow} it is enough to recall that the unit (or counit) of the purported adjunctions are $\Delta$-functorial to ensure the adjunctions are isomorphisms of $\Delta$-functors.
\end{proof}

\begin{cosa}\label{funnynot2}
 For the canonical functor $c \colon \SI \to \es$ note that
\[
\LL c_! = \LL\shdirlim{} \quad \R{}c_* = \R\shinvlim{}.
\]
Now Proposition \ref{der3} is expressed as the following chain of $\Delta$-functorial adjunctions
\[
\begin{tikzpicture}
      \node (G) {$\PDD_{\!\!\SA }(\SI)$};  
      \node[node distance=3cm, right of = G] (H) {$\PDD_{\!\!\SA }(\es)$};
      \draw[->, bend left=30] (G) to node [above] {\footnotesize $\LL\shdirlim{}$} (H);
      \draw[->, bend right=30] (G) to node [below] {\footnotesize $\R\shinvlim{}$} (H);
      \draw[<-,] (G) to node [above] {\footnotesize $c^*$} (H);
\end{tikzpicture}
\]
 Notice that in this case $\PDD_{\!\!\SA }(\es) = \D(\SA)$. The functors $\LL\shdirlim{}$ and $\R\shinvlim{}$ play in the derived category the roles of homotopy colimit and homotopy limit, respectively.
\end{cosa}

\begin{cosa} \textbf{The Beck-Chevalley maps.}
 Consider a commutative square (up to isomorphism) of functors:
\begin{equation}\label{cuadradito}
\begin{tikzpicture}[baseline=(current  bounding  box.center)]
\matrix (m) [matrix of math nodes,
row sep=3em, column sep=3em,
text height=1.5ex, text depth=0.25ex]{
\SI'  & \SJ' \\
\SI   & \SJ \\          
};
\path[->,font=\scriptsize,>=angle 90]
(m-1-1) edge node[above] {$u'$} (m-1-2)
        edge node[left]  {$v'$} (m-2-1)
(m-1-2) edge node[right] {$v$} (m-2-2)
(m-2-1) edge node[below] {$u$} (m-2-2);
\end{tikzpicture}
\end{equation}
From the natural isomorphism $v'^* u^* \iso u'^* v^*$ we obtain morphisms
\begin{align*}
u'_! v'^* \lto v^* u_! \\ v^* u_* \lto u'_* v'^* 
\end{align*}
defined by composing the following strings of natural maps, respectively,
\begin{align*}
u'_! v'^* &\xto{\text{ via }\eta} u'_! v'^* u^* u_! \liso
u'_! u'^* v^* u_! \xto{\text{ via }\varepsilon}  v^* u_! \\
v^* u_* &\xto{\text{ via }\eta} u'_* u'^* v^* u_* \liso
u'_* v'^* u^* u_* \xto{\text{ via }\varepsilon} u'_* v'^* 
\end{align*}
These maps are discussed in detail in \cite[Section 1.2]{gr13} as a ``calculus of mates''.

The next proposition gives conditions for these morphisms to be isomorphisms. We begin with a previous lemma.
\end{cosa}

\begin{lem}\label{qconserv}
 For a functor $u \colon \SI \to \SJ$, and $j \in \SJ$, denote by $\pi_j \colon \SI/j \to \SI$ and $\varpi_j \colon j\backslash\SI \to \SI$ the canonical functors considered in \ref{expdesadj}. Then, $\pi_j^*$ preserves $q$-projective objects and, dually, $\varpi_j^*$ preserves $q$-injective objects.
\end{lem}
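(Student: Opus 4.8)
The plan is to reduce to the description, due to \cite{BN} and made precise in \cite{AJS}, of $q$-projective complexes as mapping telescopes of bounded-above complexes of projective objects, and to verify that $\pi_j^*$ is compatible with every ingredient of that description; the case of $\varpi_j^*$ and $q$-injectives is strictly dual. Since $\pi_j^*$ is precomposition with $\pi_j$ it is computed pointwise, hence exact, and it commutes with arbitrary coproducts, with shifts, with mapping cones and with direct summands; in particular it preserves quasi-isomorphisms and chain-homotopy equivalences. The one point that is not formal, and which I expect to be the crux, is that $\pi_j^*$ sends a projective object of $\SA^{\SI}$ to a projective object of $\SA^{\SI/j}$.

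To see this, recall that every projective of $\SA^{\SI}$ is a direct summand of a coproduct of representable projectives $L_i Q := \lan_i Q$ (with $i \in \SI$ and $Q$ projective in $\SA$), for which $L_i Q(i') = \coprod_{\SI(i,i')} Q$. A direct pointwise computation gives, for an object $(i',\sigma\colon u(i')\to j)$ of $\SI/j$,
\[
 \pi_j^*(L_i Q)(i',\sigma) \;=\; \coprod_{f\in\SI(i,i')} Q \;\cong\; \Bigl(\,\coprod_{\alpha\colon u(i)\to j} L^{\SI/j}_{(i,\alpha)} Q\,\Bigr)(i',\sigma),
\]
naturally in $(i',\sigma)$, because to give $f\in\SI(i,i')$ is the same as to give a pair $(\alpha,f)$ with $\alpha=\sigma\circ u(f)$ and $f\in(\SI/j)\bigl((i,\alpha),(i',\sigma)\bigr)$. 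Hence $\pi_j^*(L_i Q)$ is a coproduct of representable projectives of $\SA^{\SI/j}$, so it is projective; as $\pi_j^*$ preserves coproducts and summands, it preserves projectivity in general.

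With this the argument closes formally. The functor $\pi_j^*$ now takes bounded-above complexes of projectives of $\SA^{\SI}$ to bounded-above complexes of projectives of $\SA^{\SI/j}$, which are $q$-projective. Given a $q$-projective $P\in\CCC(\SA^{\SI})$, choose by \cite[Proposition 4.3]{AJS} a $q$-projective resolution $T\iso P$ with $T$ the mapping telescope of a sequence of bounded-above complexes of projectives; since $T$ and $P$ are both $q$-projective, $T\iso P$ is a homotopy equivalence, its cone being an acyclic $q$-projective complex and hence contractible. Applying $\pi_j^*$ yields a homotopy equivalence $\pi_j^* T \simeq \pi_j^* P$, and $\pi_j^* T$ is the mapping telescope of bounded-above complexes of projectives of $\SA^{\SI/j}$; such a telescope is $q$-projective, being obtained from those complexes by a coproduct and a mapping cone, operations under which $q$-projectivity is stable. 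Therefore $\pi_j^* P$ is $q$-projective. Dually, $\varpi_j^*$ is pointwise, hence exact and compatible with products, shifts, fibres and summands; one checks as above that it sends the corepresentable injectives $R_i E := \ran_i E$ to products $\prod_{\gamma\colon j\to u(i)} R^{j\backslash\SI}_{(i,\gamma)} E$, so it preserves injective objects and hence bounded-below complexes of injectives, and combining this with the ``homotopy limit tower'' description of $q$-injective complexes from \cite[Theorem 5.4]{AJS} shows that $\varpi_j^*$ preserves $q$-injectivity. Throughout, only coproducts, products, cones and fibres enter---never exactness of products beyond what holds in any abelian category---so the argument is valid for an arbitrary Grothendieck category $\SA$ with enough projective objects.
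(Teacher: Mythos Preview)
Your proof is correct, but it follows a different route from the paper's. The paper argues purely by adjunction: since $\pi_j^* \dashv \pi_{j*}$, it suffices to show that $\pi_{j*}$ is exact (an exact right adjoint forces the left adjoint to preserve $q$-projectives, via $\Hom_K(\pi_j^*P,N)\cong\Hom_K(P,\pi_{j*}N)$). The paper then computes $\pi_{j*}$ pointwise and shows, by a cofinality argument, that the discrete fibre $\bar\pi_j^{-1}(i)$ is cofinal in the relevant coslice, so that $(\pi_{j*}F)(i)$ is just a product over $\bar\pi_j^{-1}(i)$; exactness then comes from the fact that products are exact in $\CCC(\SA)$ when $\SA$ has projective generators. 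The dual half is handled symmetrically, with $\varpi_{j!}$ reducing to a coproduct.

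Your approach instead verifies directly that $\pi_j^*$ carries the representable projectives $L_iQ$ to coproducts of representable projectives of $\SA^{\SI/j}$, and then bootstraps through the explicit telescope description of $q$-projectives from \cite{AJS}. This is longer but has a small conceptual payoff: as you observe, it never invokes exactness of infinite products, because closure of $q$-projectives under coproducts is already a homotopy-category statement. The paper's argument is shorter and more uniform (one adjunction trick covers everything), but it does lean on AB4$^*$, which is available here only because of the standing hypothesis of projective generators. Either way the hypotheses are the same in the end; you simply trade one use of ``enough projectives'' (exact products) for another (the explicit form of the resolution).
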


\begin{proof}
We need to show first that $\pi_j^*$ preserves $q$-projective objects. Since we have the adjunction $\pi_j^* \dashv \pi_{j *}$ this will follow from the fact that $\pi_{j *}$ is an exact functor. It is enough to check it element-wise. Notice first that, for every $F \in \PDC_{\!\!\SA }(\SI/j)$ and every $\mathbf{i} \in \SI/j$ with $\mathbf{i} = (i, u(i) \to j)$,
\[
\pi_{j *}(F)(\mathbf{i}) \ = \invlim{i \backslash (\SI/j)} (F \circ \varpi'_i)
\]
where $\varpi'_i \colon i \backslash (\SI/j) \to \SI/j$ is the canonical functor.
The discrete fiber $\bar\pi_j^{-1}(i)$, formed by those objects in the preimage of $i$ with only identities as morphisms, is identified with a cofinal subcategory of $i \backslash (\SI/j)$. To see this, let $\sigma \colon \bar\pi_j^{-1}(i) \to i \backslash (\SI/j)$ be the fully faithful functor defined on objects by 
\[
(i, u(i) \overset{\beta}\to j) \rightsquigarrow 
(i, u(i) \overset{\beta}\to j, i \overset{\id}\to i).
\]
The subcategory $\bar\pi_j^{-1}(i)$ is cofinal because the functor $\sigma$ possesses a right adjoint, namely $\sigma' \colon i \backslash (\SI/j) \to  \bar\pi_j^{-1}(i)$ defined on objects by 
\[(i', u(i') \overset{\beta'}\to j, i \overset{\gamma}\to i') \rightsquigarrow 
(i, u(i) \overset{u(\gamma)}\to u(i') \overset{\beta'}\to j).\]
Notice that $\sigma' \sigma = \id_{\bar\pi_j^{-1}(i)}$. In other words, the functor $\sigma$ is cofinal, see \cite[Definition 2.5.1]{ks}. With this in mind, we have that, for any morphism $v$ in $\PDC_{\!\!\SA }(\SI/j)$,
\[
\invlim{i \backslash (\SI/j)} v \circ \varpi'_i \cong
\invlim{\bar\pi_j^{-1}(i)} v \circ \varpi'_i \circ \sigma =
\prod_{\bar\pi_j^{-1}(i)} v(\mathbf{i})
\]
and products are exact in $\CCC(\SA)$, because $\SA$ has projective generators.

Now we need that $\varpi_j^*$ preserves $q$-injective objects. By the adjunction $\varpi_{j !} \dashv \varpi_j^*$ it suffices that $\varpi_{j !}$ is exact. As before, we argue element-wise. We have for every $F \in \PDC_{\!\!\SA }(j\backslash\SI)$ and every $\mathbf{i} \in j\backslash\SI$,
\[
\varpi_{j !}(F)(\mathbf{i}) \ = \dirlim{(j\backslash\SI)/ i} (F \circ \pi'_i)
\]
where $\pi'_i \colon (j\backslash\SI)/ i \to j\backslash\SI$ is the canonical functor. 
Arguing dually as before we see that the discrete fiber $\bar\varpi_j^{-1}(i)$ is final\footnote{Or \emph{co-cofinal} in the terminology of \cite[Definition 2.5.1]{ks}.} in $(j\backslash\SI)/ i$ through the final functor $\varsigma \colon \bar\varpi_j^{-1}(i) \to (j\backslash\SI)/ i$ defined in a dual fashion to $\sigma$. Let $w$ be a morphism in $\PDC_{\!\!\SA }(j\backslash\SI)$. We have the isomorphisms
\[
\dirlim{(j\backslash\SI)/ i} w \circ \pi'_i \cong
\dirlim{\bar\varpi_j^{-1}(i)} w \circ \pi'_i \circ \varsigma =
\coprod_{\bar\varpi_j^{-1}(i)} w(\mathbf{i})
\]
and dually as before, we conclude because coproducts are exact in $\CCC(\SA)$.
\end{proof}

\begin{rem}
 The argument is adapted from the proof of Der4 in \cite[Theorem 6.1.7]{bld} but the context here is somewhat simpler.
\end{rem}

\begin{prop}\label{der4}
For a functor $u \colon \SI \to \SJ$, and $j \in \SJ$, consider the commutative diagrams of functors
\[
\begin{tikzpicture}
\matrix (m) [matrix of math nodes,
row sep=3em, column sep=3em,
text height=1.5ex, text depth=0.25ex]{
\SI/j  & \es \\
\SI    & \SJ \\          
};
\path[->,font=\scriptsize,>=angle 90]
(m-1-1) edge node[above] {$p$} (m-1-2)
        edge node[left] {$\pi_j$} (m-2-1)
(m-1-2) edge node[right] {$j$} (m-2-2)
(m-2-1) edge node[below] {$u$} (m-2-2);
\end{tikzpicture}
\qquad
\begin{tikzpicture}
\matrix (m) [matrix of math nodes,
row sep=3em, column sep=3em,
text height=1.5ex, text depth=0.25ex]{
j\backslash\SI  & \es \\
\SI          & \SJ \\          
};
\path[->,font=\scriptsize,>=angle 90]
(m-1-1) edge node[above] {$q$} (m-1-2)
        edge node[left] {$\varpi_j$} (m-2-1)
(m-1-2) edge node[right] {$j$} (m-2-2)
(m-2-1) edge node[below] {$u$} (m-2-2);
\end{tikzpicture}
\]
The corresponding Beck-Chevalley natural transformations 
\begin{align}
 \LL p_!\pi_j^* \lto j^* \LL u_! \label{BCDad}\\
 j^*\R{}u_* \lto \R{}q_*\varpi_j^*\label{BCDest}
\end{align}
are isomorphisms. In other words, the axiom \emph{(Der4)} in \cite{gr13} is satisfied.
\end{prop}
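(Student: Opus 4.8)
The plan is to prove \eqref{BCDad} in detail and then obtain \eqref{BCDest} by the dual argument, interchanging $q$-projectives with $q$-injectives, colimits with limits, and the slice $\SI/j$ with the coslice $j\backslash\SI$. The homological input is exactly Lemma \ref{qconserv}; everything else is a reduction of the Beck--Chevalley map to the underived pointwise Kan extension formulas \eqref{uadmirado} and \eqref{uestrella}, where it is tautologically an isomorphism. In particular, the proof should show that (Der4) holds here for the same reason it holds for representable derivators, the only extra ingredient being that resolutions can be chosen compatibly with $\pi_j^*$ and $\varpi_j^*$.

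Concretely, for \eqref{BCDad} I would fix $F \in \PDD_\SA(\SI)$ and choose a $q$-projective resolution $\rho \colon P \iso F$ in $\PDC_\SA(\SI)$, which exists because $\SA^\SI$ is a Grothendieck category with a projective generator. By construction of the left derived functor $\LL u_! F \cong u_! P$, hence, evaluating at $j$ and using \eqref{uadmirado} together with the fact that $p_!$ for the canonical $p \colon \SI/j \to \es$ is the colimit over $\SI/j$,
\[
j^* \LL u_! F \;\cong\; u_!(P)(j) \;=\; \dirlim{\SI/j}(P \circ \pi_j) \;=\; p_!\,\pi_j^* P .
\]
On the other hand $\pi_j^*$ is exact, so it preserves quasi-isomorphisms and $\pi_j^*\rho \colon \pi_j^* P \iso \pi_j^* F$ is one; by Lemma \ref{qconserv} the diagram $\pi_j^* P$ is $q$-projective, so $\pi_j^* P \iso \pi_j^* F$ is a $q$-projective resolution and $\LL p_!\,\pi_j^* F \cong p_!\,\pi_j^* P$. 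Thus both sides of \eqref{BCDad} are canonically identified with $\dirlim{\SI/j}(P \circ \pi_j)$.

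The one point requiring care is to verify that, under these two identifications, the morphism \eqref{BCDad} is this canonical identification rather than some other automorphism. At the underived level this is immediate: by the pointwise formula \eqref{uadmirado} the functors $p_!\pi_j^*$ and $j^* u_!$ are both literally $F \mapsto \dirlim{\SI/j}(F \circ \pi_j)$, and the mate of \ref{cuadradito} (formed from the units and counits of $p_! \dashv p^*$ and $u_! \dashv u^*$, with the canonical $2$-cell of the comma square) is the identity between them. Since $P$ is $q$-projective, evaluating $\LL p_!\pi_j^* \to j^*\LL u_!$ on $P$ uses no further resolution, so every derived unit and counit specialises to its underived counterpart and the derived mate becomes this same tautological map; tracing this through the triangle identities shows \eqref{BCDad} is an isomorphism.

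For \eqref{BCDest} I would run the dual argument: choose a $q$-injective resolution $F \iso I$ in $\PDC_\SA(\SI)$ (which exists by \cite[Theorem 5.4]{AJS}), so that $\R u_* F \cong u_* I$ and, by \eqref{uestrella},
\[
j^* \R u_* F \;\cong\; u_*(I)(j) \;=\; \invlim{j\backslash\SI}(I \circ \varpi_j) \;=\; q_*\,\varpi_j^* I ,
\]
while by exactness of $\varpi_j^*$ and Lemma \ref{qconserv} the diagram $\varpi_j^* I \iso \varpi_j^* F$ is a $q$-injective resolution, whence $\R q_*\,\varpi_j^* F \cong q_*\,\varpi_j^* I$. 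The identification of \eqref{BCDest} with the canonical comparison is dual to the case just treated, so it too is an isomorphism. I expect the $2$-cell/coherence bookkeeping in the third step to be the only genuinely fiddly part; the rest is formal.
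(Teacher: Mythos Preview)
Your proposal is correct and follows essentially the same approach as the paper: reduce the derived Beck--Chevalley map to the underived one (which is an isomorphism by the pointwise Kan extension formulas, i.e.\ by the representable derivator $\PDC_\SA$) by choosing a $q$-projective (resp.\ $q$-injective) resolution and invoking Lemma~\ref{qconserv} to ensure that $\pi_j^*$ (resp.\ $\varpi_j^*$) preserves it. The only difference is that you spell out the coherence check that the derived mate specializes to the underived one on a resolution, whereas the paper leaves this implicit.
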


\begin{proof}
The fact that $\PDC_{\!\!\SA }$ is a derivator \eqref{derC} provides us the following isomorphisms
\begin{align}
 p_!\pi_j^* &\liso j^* u_!        \label{BCCad}\\ 
 j^* u_*    &\liso q_*\varpi_j^*. \label{BCCest}
\end{align}
Let us check \eqref{BCDad} first. Let $F \in \PDD_{\!\!\SA }(\SI)$ and let $P_F \iso F$ be a $q$-projective resolution. We have that
\[
\LL p_!\pi_j^* (F) \cong \LL p_!\pi_j^* (P_F) \cong p_!\pi_j^* (P_F)
\]
where the last canonical isomorphism is due to the fact that $\pi_j^* (P_F)$ is also $q$-projective by Lemma \ref{qconserv}. Furthermore,
\[
j^*\LL u_!(F) \cong j^* \LL u_! (P_F) \cong j^* u_! (P_F).
\]
By the isomorphism \eqref{BCCad}, it holds that $p_!\pi_j^* (P_F) \cong j^* u_! (P_F)$ so \eqref{BCDad} is established.

Let us check now \eqref{BCDest}. Let $F \in \PDD_{\!\!\SA }(\SI)$ and $F \iso I_F$ be a $q$-injective resolution. We have
\[
 j^*\R{}u_*(F) \cong  j^*\R{}u_* (I_F) \cong  j^*u_* (I_F)
\]
On the other hand
\[
\R{}q_*\varpi_j^* (F) \cong \R{}q_*\varpi_j^* (I_F) \cong q_*\varpi_j^* (I_F)
\]
where the last canonical isomorphism is due to the fact that $\varpi_j^* (I_F)$ is also $q$-injective by Lemma \ref{qconserv}, dual case. Now we can use the isomorphism \eqref{BCCest}, $j^*u_* (I_F) \cong q_*\varpi_j^* (I_F)$ and we conclude.
\end{proof}

\begin{rem}
Let  $F \in \PDD_{\!\!\SA }(\SI)$, using the notations in \ref{funnynot1}, $j^*\LL u_!(F) \cong (\LL u_!F)_j$ and $j^*\R{}u_*(F) \cong (\R{}u_*F)_j$. Also, in the setting of \ref{funnynot2}, the functors $p$, $q$ are identified with the canonical functor $c$. With this notations, formulas \eqref{BCDad} and \eqref{BCDest} become
 \[
 \LL\shdirlim\, \pi_j^*(F) \liso (\LL u_!F)_j \quad \text{ and } \quad
 (\R{}u_*F)_j \liso \R\shinvlim\, \varpi_j^*(F)
 \]
respectively.  These formulas may be expressed concisely as the fact that derived co/limits are computed pointwise.
\end{rem}
 
\section{The strong derivator of derived categories}\label{tres}

\begin{cosa} \textbf{The underlying diagram functor.}
Let us consider, for two small categories $\SI$ and $\SJ$, the canonical isomorphism
\[
\PDC_{\!\!\SA }(\SI \times \SJ) = \CCC(\SA^{\SI \times \SJ}) \liso \CCC(\SA^{\SJ})^\SI = \PDC_{\!\!\SA }(\SJ)^\SI
\]
Denote the composed isomorphism as
\[
\dia_{\SI, \SJ} \colon \PDC_{\!\!\SA }(\SI \times \SJ) \lto \PDC_{\!\!\SA }(\SJ)^\SI
\]
 It yields the following diagram
\begin{equation}\label{underdia}
\begin{tikzpicture} [baseline=(current  bounding  box.center)]
\matrix (m) [matrix of math nodes,
row sep=3em, column sep=3em,
text height=1.5ex, text depth=0.25ex]{
\PDC_{\!\!\SA }({\SI \times \SJ}) & \PDC_{\!\!\SA }({\SJ})^\SI \\
\PDD_{\!\!\SA }({\SI \times \SJ}) & \PDD_{\!\!\SA }({\SJ})^\SI \\          
};
\path[->,font=\scriptsize,>=angle 90]
(m-1-1) edge node[above] {$\dia_{\SI, \SJ}$} (m-1-2)
        edge node[left] {$Q$} (m-2-1)
(m-1-2) edge node[right] {$Q^{\SI}$} (m-2-2)
(m-2-1) edge node[below] {$\ddia_{\SI, \SJ}$} (m-2-2);
\end{tikzpicture}
\end{equation}
where by $Q, Q^{\SI}$ we denote the canonical quotient functors. The existence of $\ddia_{\SI, \SJ}$ is guaranteed by the fact that $Q^{\SI} \circ \dia_{\SI, \SJ}$ takes quasi-isomorphisms into isomorphisms. Notice that $\ddia_{\SI, \SJ}$ need not be an equivalence of categories. The following proposition gives the next best property about this functor.
\end{cosa}

\begin{cosa}
 We will denote by $\dos$ the category corresponding to the poset $\{0 \to 1\}$ sometimes called the ``walking arrow''. Notice that for an arbitrary category $\SC$, the functor category $\SC^{\dos}$ is the arrow category of $\SC$. The category $\dos$ is denoted $[1]$ in \cite{gr13}. We have opted for a different notation to avoid a conflict with the notation for the shift functor in complex categories.
\end{cosa}

\begin{prop}\label{der5}
The underlying functor 
\[
 \ddia_{\dos, \SJ} \colon \PDD_{\!\!\SA }(\dos \times {\SJ}) \to \PDD_{\!\!\SA }({\SJ})^{\dos}
\]
is full and essentially surjective for each category $\SJ$. In other words, the derivator $\PDD_{\!\!\SA }$ is strong, otherwise said, the axiom \emph{(Der5)} in \cite{gr13} is satisfied.
\end{prop}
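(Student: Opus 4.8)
The plan is to check the two defining properties of (Der5) separately. Essential surjectivity will be a soft consequence of the fact that a morphism out of a $q$-projective complex in the derived category is represented by an honest chain map; fullness is the substantial point and will be obtained by an explicit rectification of a homotopy-commutative square using a mapping cylinder — the "basic homological tools" announced in the introduction.

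\textbf{Essential surjectivity.} Given an object $(f\colon X \to Y)$ of $\PDD_\SA(\SJ)^{\dos}$, choose representatives $X, Y \in \CCC(\SA^{\SJ})$ and a $q$-projective resolution $p\colon P \iso X$, which exists by \ref{derC}. Since $P$ is $q$-projective, $\Hom_{\D(\SA^{\SJ})}(P, Y) = \Hom_{\K(\SA^{\SJ})}(P, Y)$, so the composite $f\circ\overline p$ is represented by an honest morphism of complexes $g\colon P \to Y$. Under $\CCC(\SA^{\dos\times\SJ}) \cong \CCC(\SA^{\SJ})^{\dos}$ the arrow $(g\colon P \to Y)$ is an object of $\PDD_\SA(\dos\times\SJ)$ whose image under $\ddia_{\dos,\SJ}$ is, by \eqref{underdia}, the arrow $(\overline g\colon P \to Y)$; since $\overline g = f\circ\overline p$ in $\D(\SA^{\SJ})$, the pair $(\overline p, \id_Y)$ is an isomorphism in $\PDD_\SA(\SJ)^{\dos}$ onto $(f\colon X \to Y)$.

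\textbf{Fullness: reduction.} Since $\ddia_{\dos,\SJ}$ is a functor, it suffices to prove fullness for objects in a class meeting every isomorphism class. Choosing a $q$-projective resolution in $\CCC(\SA^{\dos\times\SJ})$ of a representative of a given object, we may assume it is an honest arrow of complexes $\alpha\colon P_0 \to P_1$; moreover the two evaluation functors $\CCC(\SA^{\SJ})^{\dos} \to \CCC(\SA^{\SJ})$ admit right adjoints ($M \mapsto (M \to 0)$ and $M \mapsto (M \overset{\id}{\to} M)$) which preserve acyclic complexes, hence preserve $q$-projectives, so we may further assume that both $P_0$ and $P_1$ are $q$-projective. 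The target may likewise be taken to be an honest arrow $\beta\colon G_0 \to G_1$. Now let $\xi = (s_0, s_1)$ be a morphism in $\PDD_\SA(\SJ)^{\dos}$ from $(\overline\alpha\colon P_0\to P_1)$ to $(\overline\beta\colon G_0 \to G_1)$. By $q$-projectivity of $P_0, P_1$ we represent $s_0, s_1$ by honest chain maps $\sigma_0\colon P_0 \to G_0$ and $\sigma_1\colon P_1 \to G_1$, and the relation $\overline\beta\,\overline{\sigma_0} = \overline{\sigma_1}\,\overline\alpha$ in $\D(\SA^{\SJ})$ together with $q$-projectivity of $P_0$ forces a chain homotopy $h$ with $\beta\sigma_0 - \sigma_1\alpha = dh + hd$.

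\textbf{Fullness: rectification.} Replace the arrow $(P_0 \overset{\alpha}{\to} P_1)$ by the quasi-isomorphic $(P_0 \overset{i_0}{\to} \cyl(\alpha))$, where $P_0 \overset{i_0}{\to} \cyl(\alpha) \overset{\pi}{\to} P_1$ is the mapping-cylinder factorization ($\pi i_0 = \alpha$, and $\pi$ is a homotopy equivalence with one-sided inverse $i_1$, $\pi i_1 = \id$). Define a strict morphism of arrows $(\sigma_0, t_1)\colon (P_0 \overset{i_0}{\to} \cyl(\alpha)) \to (G_0 \overset{\beta}{\to} G_1)$ in $\CCC(\SA^{\dos\times\SJ})$ by letting $t_1\colon \cyl(\alpha) \to G_1$ be built on the three summands of $\cyl(\alpha)$ out of $\beta\sigma_0$, the homotopy $h$, and $\sigma_1$; with the matching sign conventions for the differential of $\cyl(\alpha)$, the identity for $h$ is precisely what makes $t_1$ a chain map, and one checks $t_1 i_0 = \beta\sigma_0$ and $t_1 i_1 = \sigma_1$. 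Composing with the isomorphism $(\id_{P_0}, \overline\pi)$ in $\PDD_\SA(\dos\times\SJ)$ yields a morphism $\widetilde\xi\colon (P_0\overset{\alpha}{\to}P_1) \to (G_0\overset{\beta}{\to}G_1)$ with $\ddia_{\dos,\SJ}(\widetilde\xi) = (\overline{\sigma_0},\, \overline{t_1}\,\overline\pi^{-1}) = (s_0, s_1) = \xi$, using $\overline\pi^{-1} = \overline{i_1}$ in $\D(\SA^{\SJ})$ and $t_1 i_1 = \sigma_1$. The only delicate point I expect is the sign bookkeeping in the definition of $t_1$ and of the differential of $\cyl(\alpha)$ (so that "chain map" genuinely falls out of the homotopy relation), together with keeping track of the direction of the homotopy equivalence $\pi$, so that the induced arrow in $\PDD_\SA(\SJ)^{\dos}$ is literally $\xi$ and not merely isomorphic to it.
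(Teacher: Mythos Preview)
Your proof is correct and follows essentially the same route as the paper's: both arguments use the mapping cylinder of the source arrow to rectify the homotopy-commutative square, defining the map out of $\cyl(\alpha)$ from the two vertical chain maps and the chosen homotopy. The only differences are cosmetic --- you work with $q$-projective resolutions of the source while the paper uses $q$-injective resolutions, and you are more explicit than the paper about why the components $P_0,P_1$ inherit $q$-projectivity and about performing the final composition with the levelwise quasi-isomorphism $(\id,\pi)$ inside $\PDD_\SA(\dos\times\SJ)$ rather than in the target.
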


\begin{proof}
 Let us see that it is essentially surjective. Let $[\phi \colon F \to G]$ be an object of $\PDD_{\!\!\SA }({\SJ})^{\dos}$. It corresponds to a diagram 
 \[
F \xot{\sigma} F' \xto{\psi}  G
 \]
 in $\PDC_{\!\!\SA }({\SJ})$ with $\sigma$ a quasi-isomorphism. In other words, we represent $\phi$ as the fraction $\psi / \sigma$. By taking a $q$-injective resolution $G \to I_G$, it induces a map of complexes $\phi' \colon F \to I_G$ such that $\phi = Q^{\dos}(\phi')$.  By the equivalence of categories $\dia_{\dos, \SJ}$ there is a 
 $\phi'' \in \PDC_{\!\!\SA }({\dos \times \SJ})$ whose image is $\phi'$. Now, by the commutativity of the square \eqref{underdia} the image under $\ddia_{\dos, \SJ}$ of $Q(\phi'')$ is $\phi$.
 
To complete the proof, we have to prove that $\ddia_{\dos, \SJ}$ is surjective on maps. After taking $q$-injective resolutions, any object in $\PDD_{\!\!\SA }({\SJ})^{\dos}$ is represented by map of complexes. Let $[\varphi] = [\varphi \colon F \to G]$ be such an object. A morphism in $\PDD_{\!\!\SA }({\SJ})^{\dos}$, $\ggmm \colon [\varphi] \to [\varphi']$ is represented by a square of morphisms of complexes
\begin{equation}\label{maphot}
\begin{tikzpicture} [baseline=(current  bounding  box.center)]
\matrix (m) [matrix of math nodes,
row sep=3em, column sep=3em,
text height=1.5ex, text depth=0.25ex]{
F  & G \\
F' & G' \\          
};
\path[->,font=\scriptsize,>=angle 90]
(m-1-1) edge node[above] {$\varphi$} (m-1-2)
        edge node[left] {$\gamma_1$} (m-2-1)
(m-1-2) edge node[right] {$\gamma_2$} (m-2-2)
(m-2-1) edge node[below] {$\varphi'$} (m-2-2);
\draw [shorten >=0.2cm,shorten <=0.2cm, ->, dashed] (m-1-1) -- (m-2-2) node[auto, midway,font=\scriptsize]{$s$};
\end{tikzpicture}
\end{equation}
that commutes \emph{up to homotopy}. We specify the homotopy, a graded map $s \colon F[1] \to G'$ such that $\gamma_2 \varphi - \varphi' \gamma_1 = d s + s d$. We denote $\ggmm = (\gamma_1, \gamma_2, s)$. To get a preimage through $\ddia_{\dos, \SJ}$ for $\ggmm$ it is enough to find a quasi-isomorphic square that commutes in $\PDC_{\!\!\SA }({\SJ})$. We will show that this object is lifted to $\PDC_{\!\!\SA }({\SJ})^{\dos} \cong \PDC_{\!\!\SA }({\dos \times \SJ})$ and it will provide the desired preimage in $\PDD_{\!\!\SA }({\dos \times \SJ})$.

Consider also the following diagram 
\begin{equation}\label{mapiso}
\begin{tikzpicture} [baseline=(current  bounding  box.center)]
\matrix (m) [matrix of math nodes,
row sep=3em, column sep=3em,
text height=1.5ex, text depth=0.25ex]{
F  & G \\
F  & \cyl(\varphi) \\          
};
\path[->,font=\scriptsize,>=angle 90]
(m-1-1) edge node[above] {$\varphi$} (m-1-2)
        edge node[left] {$\id$} (m-2-1)
(m-1-2) edge node[right] {$\alpha$} (m-2-2)
(m-2-1) edge node[below] {$\iota$} (m-2-2);
\draw [shorten >=0.2cm,shorten <=0.2cm, ->, dashed] (m-1-1) -- (m-2-2) node[auto, midway,font=\scriptsize]{$t$};
\end{tikzpicture}
\end{equation}
where, as usual, for $n \in \ZZ$,
\[
\cyl(\varphi)^n = F^{n+1} \oplus F^n \oplus G^n \quad \text{ with differential } \quad
\footnotesize{
\begin{pmatrix}
-d       & 0 & 0 \\
\id      & d & 0 \\
-\varphi & 0 & d
\end{pmatrix}},
\]
the morphism $\iota$ is the inclusion in the second component and $\alpha$ is the inclusion in the third component. This diagram is commutative up to homotopy. Indeed, let the graded map $t \colon F[1] \to \cyl(\varphi)$ be defined as the inclusion in the first component. It holds that $\iota - \alpha \varphi = d t + t d$, verifying the claim.

The morphism $\alpha$ has a homotopy inverse, namely $\beta:= (0~ \varphi~ \id)$. In fact $\beta \alpha = \id$ while $\alpha \beta$ is just homotopic to $\id$ through the homotopy given by the matrix
\[ 
{\footnotesize
\begin{pmatrix}
0 & \id & 0 \\
0 & 0   & 0 \\
0 & 0   & 0
\end{pmatrix}}
\]
It holds $\cyl(\varphi) \cong G$ in the derived category, therefore we get an isomorphism 
$\aall \colon [\varphi] \iso [\iota]$ in $\PDD_{\!\!\SA }({\SJ})^{\dos}$ given by $\aall = (\id, \alpha, 0)$.

We promised to replace \eqref{maphot} by a commutative diagram in $\PDC_{\!\!\SA }({\SJ})$.  
Let $\tilde{\gamma}_2 \colon \cyl(\varphi) \to G'$ be defined by the matrix 
\[(-s\quad \gamma_2\varphi-d_{G'}s-sd_F\quad \gamma_2).\]
 The morphism $\tilde{\gamma}_2$ is a map of complexes because
\begin{align*}
 d_{G'} \tilde{\gamma}_2 &= 
 (-d_{G'}s \quad d_{G'}\gamma_2\varphi - d_G s d_F\quad d_{G'}\gamma_2) =\\
& = (-d_{G'}s\quad \gamma_2\varphi d_{F} - d_G s d_F\quad \gamma_2 d_{G'}) 
\,\,\, = \tilde{\gamma}_2 d_{\cyl(\varphi)}.
\end{align*}
The square
 \begin{equation}\label{maprect}
\begin{tikzpicture} [baseline=(current  bounding  box.center)]
\matrix (m) [matrix of math nodes,
row sep=3em, column sep=3em,
text height=1.5ex, text depth=0.25ex]{
F  & \cyl(\varphi) \\
F' & G' \\          
};
\path[->,font=\scriptsize,>=angle 90]
(m-1-1) edge node[above] {$\iota$} (m-1-2)
        edge node[left] {$\gamma_1$} (m-2-1)
(m-1-2) edge node[right] {$\tilde{\gamma}_2$} (m-2-2)
(m-2-1) edge node[below] {$\varphi'$} (m-2-2);
\end{tikzpicture}
\end{equation}
is commutative \emph{already in} $\PDC_{\!\!\SA }({\SJ})$. This provides a morphism $\tilde{\ggmm} \colon [\iota] \to [\varphi']$ given by $\tilde{\ggmm} = (\gamma_1, \tilde{\gamma}_2, 0)$. It remains to check that $\ggmm = \tilde{\ggmm} \circ \aall$ in $\PDD_{\!\!\SA }({\SJ})^{\dos}$. This amounts to check that $\varphi' \gamma_1$ is homotopic to $\tilde{\gamma}_2 \alpha \varphi$. Notice that 
\begin{align*}
 \varphi' \gamma_1 &= \gamma_2 \varphi - d_{G'}s - s d_F\\
 \tilde{\gamma}_2 \alpha \varphi & = \gamma_2 \varphi
\end{align*}
whose difference is null by the homotopy $s$, so we conclude.

The fact that $\aall$ is an isomorphism in $\PDD_{\!\!\SA }({\SJ})^{\dos}$ implies that $\ggmm$ and $\tilde{\ggmm}$ are isomorphic. Otherwise said, the squares \eqref{maphot} and \eqref{maprect} correspond to the same object in $\PDD_{\!\!\SA }({\SJ})^{\dos}$. But $\tilde{\ggmm}$, which is represented by the former square, has a preimage in $\PDD_{\!\!\SA }({\dos \times \SJ})$ because it is a commutative square of complexes not just up to homotopy.
\end{proof}

\begin{thm}
Let $\SA$ be a Grothendieck category with enough projective objects. The prederivator $\PDD_{\!\!\SA }$ is a strong derivator that is defined over all the 2-category of diagrams $\cat$.
\end{thm}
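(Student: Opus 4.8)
The plan is simply to collect the results of Sections~\ref{dos} and~\ref{tres}. By construction $\PDD_\SA \colon \cat^\op \to \bigcat$ is a strict $2$-functor, hence a prederivator in the sense of \cite[Definition 1.1]{gr13}, and its five defining axioms have been checked one at a time: axiom (Der1) is Proposition~\ref{der1}, axiom (Der2) is Proposition~\ref{der2}, axiom (Der3) is Proposition~\ref{der3}, axiom (Der4) is Proposition~\ref{der4}, and the strengthened axiom (Der5) is Proposition~\ref{der5}. Thus $\PDD_\SA$ is a strong derivator, and it only remains to argue that it takes values in $\cat$ rather than merely in $\bigcat$.

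For this, I would observe that for each small category $\SI$ the category $\SA^\SI$ is again a Grothendieck category with enough projective objects, as recorded in \ref{derC}, so that $\CCC(\SA^\SI)$ is a Grothendieck category by \cite[Proposition 14.1.3]{ks}. In particular every complex admits a $q$-injective resolution by \cite[Theorem 5.4]{AJS}, so for $X, Y \in \D(\SA^\SI)$ and a chosen $q$-injective resolution $Y \iso I_Y$ one has a natural bijection
\[
\Hom_{\D(\SA^\SI)}(X, Y) \cong \Hom_{\K(\SA^\SI)}(X, I_Y),
\]
and the right-hand side is a set, being a quotient of the set $\Hom_{\CCC(\SA^\SI)}(X, I_Y)$ of morphisms of complexes. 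Hence each $\PDD_\SA(\SI)$ is locally small, i.e.\ an object of $\cat$; the transition functors $u^*$ are evidently functors of such categories, so $\PDD_\SA$ factors through $\cat$.

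There is no genuine obstacle here beyond correctly invoking the earlier propositions; the only point that is not purely formal bookkeeping is the local smallness of the derived categories, which as indicated above reduces to the existence of $q$-injective resolutions established in \cite{AJS}.
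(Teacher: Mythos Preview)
Your proof is correct and follows the same approach as the paper, which simply cites Propositions~\ref{der1}--\ref{der5}. You go slightly further by spelling out why each $\PDD_\SA(\SI)$ is locally small via $q$-injective resolutions, a point the paper leaves implicit.
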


\begin{proof}
It follows from Propositions \ref{der1}, \ref{der2}, \ref{der3}, \ref{der4} and \ref{der5}.
\end{proof}

\section{Stability}\label{cuatro}

We begin with a trivial observation.

\begin{prop}\label{derpoint}
 Let $\SA$ be a Grothendieck category with enough projective objects. The strong derivator  $\PDD_{\!\!\SA }$ is pointed.
\end{prop}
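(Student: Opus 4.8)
The plan is to verify that $\PDD_\SA$ satisfies the standard definition of a \emph{pointed} derivator, namely that the base category $\PDD_\SA(\es) = \D(\SA)$ has a zero object, equivalently that the unique functor $\varnothing \to \es$ from the empty category induces a zero object situation, or most simply: that $\D(\SA)$ is a pointed category (has an object which is both initial and terminal). Since $\PDD_\SA(\es) = \D(\SA)$ is a triangulated category (indeed it will be shown stable in Theorem~\ref{main}, but one does not need that here), it already has a zero object, namely the zero complex $0$.

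First I would recall that in the Grothendieck category $\SA$ the zero object $0$ gives the zero complex in $\CCC(\SA)$, which is $q$-projective and $q$-injective, and whose image in $\D(\SA)$ is a zero object: $\Hom_{\D(\SA)}(0, X) = \Hom_{\D(\SA)}(X, 0) = 0$ for every $X$, since any morphism in the derived category out of (resp.\ into) an acyclic complex that admits $0$ as a $q$-projective (resp.\ $q$-injective) representative factors through $0$. Hence $\D(\SA)$ is a pointed category. More conceptually, I would invoke that $\PDD_\SA$ is a derivator (the previous theorem) and that $\D(\SA)$ being additive and triangulated is automatically pointed; in the language of \cite{gr13}, a derivator $\PDD$ is pointed precisely when $\PDD(\es)$ has a zero object, so the statement reduces to this observation.

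The only thing to check beyond "the base category has a zero object" — depending on which equivalent formulation of "pointed derivator" one adopts — is that for the inclusion $\varnothing \hookrightarrow \es$ of the empty category, the adjoints $\LL\varnothing_!$ and $\R\varnothing_*$ exist (which they do, by Proposition~\ref{der3}, since $\PDD_\SA$ is already a derivator) and that their values at $\es$ are the zero object. Since $\PDC_\SA(\varnothing)$ is the terminal category (one object, the empty diagram), $\varnothing_!$ and $\varnothing_*$ applied to it are the colimit and limit of the empty diagram in $\CCC(\SA)$, i.e.\ the initial object $0$ and the terminal object $0$ respectively; deriving changes nothing since $0$ is both $q$-projective and $q$-injective. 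Therefore the canonical map from the initial object to the terminal object of $\D(\SA)$ is an isomorphism, which is exactly the condition that $\PDD_\SA$ is pointed.

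I do not anticipate a genuine obstacle here — this is, as the paper says, a trivial observation — the only mild subtlety is bookkeeping about which of the several equivalent definitions of "pointed derivator" from \cite{gr13} one uses, and noting that all of them amount to $\D(\SA)$ having a zero object, which is immediate because $\D(\SA)$ is triangulated (or just additive). So the proof is essentially: \emph{The derived category $\D(\SA) = \PDD_\SA(\es)$ is triangulated, hence pointed; by \cite[Definition/Remark on pointed derivators]{gr13} this is equivalent to $\PDD_\SA$ being pointed.} \qed
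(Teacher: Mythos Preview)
Your proposal is correct and takes essentially the same approach as the paper: both observe that $\PDD_\SA(\es) = \D(\SA)$ (and indeed every $\PDD_\SA(\SI)$) is an additive category and hence has a zero object, which is exactly the definition of a pointed derivator. The paper's proof is a single sentence to this effect; your additional bookkeeping about the empty diagram and $q$-projective/$q$-injective resolutions of $0$ is harmless elaboration but not needed.
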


\begin{proof}
It is clear that $\PDD_{\!\!\SA }(\es)$ (and, in fact, all $\PDD_{\!\!\SA }(\SI)$) are pointed, being additive categories.
\end{proof}

We will look now at the stability property of $\PDD_{\!\!\SA }$. This property is interesting because, as it is well-known, stability is a \emph{property} of derivators while to be a triangulated category is a \emph{structure} we impose. The following discussion expresses the important fact that the usual triangulation defined via mapping cones agrees with the natural triangulation that arises from the derivator.

\begin{cosa} \textbf{Co/cartesian squares.}
We recall here a few basic notations from \cite{gr13}. Let $\square$ be the small category given by the following poset
\[
\begin{tikzpicture} [baseline=(current  bounding  box.center)]
\matrix (m) [matrix of math nodes,
row sep=2em, column sep=2em,
text height=1.5ex, text depth=0.25ex]{
(0,0)  & (1,0) \\
(0,1)  & (1,1) \\          
};
\path[->,font=\scriptsize,>=angle 90]
(m-1-1) edge (m-1-2)
        edge (m-2-1)
(m-1-2) edge (m-2-2)
(m-2-1) edge (m-2-2);
\end{tikzpicture}
\]
Let the following inclusions of subcategories $i_{\ul} \colon \ul \to \square$ and 
$i_\lr \colon \lr \to \square$ be given, respectively, by the subposets: 
\[
\begin{tikzpicture} [baseline=(current  bounding  box.center)]
\matrix (m) [matrix of math nodes,
row sep=2em, column sep=2em,
text height=1.5ex, text depth=0.25ex]{
(0,0)  & (1,0) \\
(0,1)  & \\          
};
\path[->,font=\scriptsize,>=angle 90]
(m-1-1) edge (m-1-2)
        edge (m-2-1);
\end{tikzpicture}
 \qquad
\begin{tikzpicture} [baseline=(current  bounding  box.center)]
\matrix (m) [matrix of math nodes,
row sep=2em, column sep=2em,
text height=1.5ex, text depth=0.25ex]{
       & (1,0) \\
(0,1)  & (1,1) \\          
};
\path[->,font=\scriptsize,>=angle 90]
(m-1-2) edge (m-2-2)
(m-2-1) edge (m-2-2);
\end{tikzpicture}
\]
We say that an object in $\PDD_{\!\!\SA }(\square)$ is \emph{cocartesian} if it lies in the essential image of $\LL{}i_{\ul !} \colon \PDD_{\!\!\SA }(\ul) \to \PDD_{\!\!\SA }(\square)$. Dually, it is called \emph{cartesian} if it lies in the essential image of $\R{}i_{\lr *} \colon \PDD_{\!\!\SA }(\lr) \to \PDD_{\!\!\SA }(\square)$.
\end{cosa}

\begin{lem}\label{cocarttridis}
Let $F \in \PDD_{\!\!\SA }(\square)$. The object $F$ is cocartesian if, and only if, there is a representative in $\PDC_{\!\!\SA }(\square)$ denoted by 
\begin{equation}\label{cocartsq}
 \begin{tikzpicture}[baseline=(current  bounding  box.center)]
\matrix (m) [matrix of math nodes,
row sep=3em, column sep=3em,
text height=2ex,
text depth=0.25ex]{
	F_{00} & F_{10}  \\
	F_{01} & F_{11}  \\
};
\path[->, font=\scriptsize]
(m-1-1) edge node[above] {$\varphi_{E}$} (m-1-2)
        edge node[left] {$\varphi_{S}$} (m-2-1)
(m-1-2) edge node[right] {} (m-2-2)
(m-2-1) edge node[below] {} (m-2-2);
\end{tikzpicture}  
\end{equation}
such that there is distinguished triangle in $\PDD_{\!\!\SA }(\es)$
\begin{equation}\label{cocarttri}
F_{00} \overset{\alpha}\lto F_{10} \oplus F_{01} \lto F_{11} \overset{+}\lto 
\end{equation}
with $\alpha := (\varphi_{E} \quad - \varphi_{S})^{\ts}$.
\end{lem}

\begin{proof}
If $F$ is cocartesian, we have that $F = \LL{}i_{\ul !}F'$ where $F'$ denotes the subdiagram $F' := (F_{00}, F_{10}, F_{01}, \varphi_{E}, \varphi_{S})$. By taking a $q$-projective resolution of $F'$, we may asume that $F = i_{\ul !}F'$, in other words $F_{11} = F_{10} \coprod_{F_{00}} F_{01}$ in $\CCC(\SA)$. In this case we have an exact sequence in $\CCC(\SA)$:
\[
0 \lto F_{00} \overset{\alpha}\lto F_{10} \oplus F_{01} 
  \lto F_{10} \amalg_{F_{00}} F_{01} \lto 0.
\]
This exact sequence gives a distinguished triangle in $\D(\SA)$:
\[
F_{00} \overset{\alpha}\lto F_{10} \oplus F_{01} 
                       \lto F_{10} \amalg_{F_{00}} F_{01}
            \overset{+}\lto .
\]

Conversely, the existence of a commutative square \eqref{cocartsq} (where we will assume that all complexes are $q$-projective), such that the triangle \eqref{cocarttri} is distinguished,  expresses the fact that there is an isomorphism $F_{11} \iso \SC(\alpha)$ in $\PDD_{\!\!\SA}(\es)$, where $\SC(\alpha)$ denotes the usual mapping cone of the morphisms of complexes $\alpha$. On the other hand $F_{10} \amalg_{F_{00}} F_{01}$ is the cokernel of $\alpha$, thus isomorphic to $F_{11}$ which implies that $F$ is cocartesian.
\end{proof}

We have also a dual statement.

\begin{lem}\label{carttridis}
Let $G \in \PDD_{\!\!\SA }(\square)$. The object $G$ is cartesian if, and only if, there is a representative in $\PDC_{\!\!\SA }(\square)$ that we denote by 
\begin{equation}\label{cartsq}
\begin{tikzpicture}[baseline=(current  bounding  box.center)]
\matrix (m) [matrix of math nodes,
row sep=3em, column sep=3em,
text height=2ex,
text depth=0.25ex]{
	G_{00} & G_{10}  \\
	G_{01} & G_{11}  \\
};
\path[->, font=\scriptsize]
(m-1-1) edge node[above] {} (m-1-2)
        edge node[left] {} (m-2-1)
(m-1-2) edge node[right] {$\psi_{S}$} (m-2-2)
(m-2-1) edge node[below] {$\psi_{E}$} (m-2-2);
\end{tikzpicture}  
\end{equation}
such that there is distinguished triangle in $\PDD_{\!\!\SA }(\es)$
\begin{equation}\label{carttri}
G_{00} \lto G_{10} \oplus G_{01} \overset{\beta}\lto  G_{11} \overset{+}\lto 
\end{equation}
with $\beta := (\psi_{S} \quad \psi_{E})$. 
\end{lem}

\begin{proof}
 It is similar to the previous one. If $G$ is cartesian, we have that $G = \R{}i_{\lr *}G'$ with $G' = (G_{10}, G_{10}, G_{10}, \psi_{S}, \psi_{E})$. Replacing $G'$ by a $q$-injective resolution we assume that $G = i_{\lr *}G'$, in which case we have that $G_{00} = G_{10} \times_{G_{11}} G_{01}$ in $\CCC(\SA)$. Thus we have the following exact sequence
 \[
 0 \lto G_{10} \times_{G_{11}} G_{01} \lto G_{10} \oplus G_{01} 
                       \overset{\beta}\lto  G_{11} \lto 0
\]
in $\CCC(\SA)$ with $\beta =  (\psi_{S} \quad \psi_{E})$. As before, this gives a distinguished triangle in $\D(\SA)$
\[
G_{10} \times_{G_{11}} G_{01} \lto G_{10} \oplus G_{01} 
               \overset{\beta}\lto  G_{11} \overset{+}\lto .
\]

Conversely, the existence of a commutative square formed by $q$-injective complexes \eqref{cartsq}, such that the associated triangle \eqref{cocarttri} is distinguished, yields isomorphisms in $\D(\SA)$
\[
\SC(\beta)[-1] \liso G_{10} \times_{G_{11}} G_{01}.
\]
But $G_{10} \times_{G_{11}} G_{01}$ is the kernel of $\beta$, thus isomorphic to $G_{00}$ which implies that $G$ is cartesian.
\end{proof}


\begin{cosa}
 According to \cite[Definition 4.1]{gr13}, a strong derivator is called \emph{stable} if it is pointed and if it satisfies the following property: a square is cocartesian if and only if it is cartesian. We refer to \emph{loc.~cit.} for further comments on this notion. A square that is both cartesian and cocartesian is called a \emph{bicartesian} square.
\end{cosa}

\begin{thm}\label{main}
 Let $\SA$ be a Grothendieck category with enough projective objects. The strong derivator  $\PDD_{\!\!\SA }$ is stable.
\end{thm}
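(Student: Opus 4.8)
The plan is to combine the two preceding lemmas, which already give an essentially complete picture. By Proposition \ref{derpoint} the derivator $\PDD_\SA$ is pointed, so the only thing left is the equivalence: an object of $\PDD_\SA(\square)$ is cocartesian if and only if it is cartesian. First I would fix $F \in \PDD_\SA(\square)$ with underlying diagram
\[
\begin{tikzpicture}[baseline=(current  bounding  box.center)]
\matrix (m) [matrix of math nodes,
row sep=3em, column sep=3em,
text height=2ex, text depth=0.25ex]{
	F_{00} & F_{10}  \\
	F_{01} & F_{11}  \\
};
\path[->, font=\scriptsize]
(m-1-1) edge node[above] {$\varphi_{0010}$} (m-1-2)
        edge node[left] {$\varphi_{0001}$} (m-2-1)
(m-1-2) edge node[right] {$\varphi_{1011}$} (m-2-2)
(m-2-1) edge node[below] {$\varphi_{0111}$} (m-2-2);
\end{tikzpicture}
\]
and set $\alpha := \varphi_{0010} - \varphi_{0001} \colon F_{00} \to F_{10} \oplus F_{01}$ and $\beta := (\varphi_{1011}~{-}\varphi_{0111}) \colon F_{10} \oplus F_{01} \to F_{11}$ (the sign chosen so that $\beta\alpha = 0$ reflects the commutativity of the square). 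The key point is that the three objects $F_{00}$, $F_{10} \oplus F_{01}$, $F_{11}$ together with $\alpha$ and $\beta$ fit into a single candidate triangle
\[
F_{00} \overset{\alpha}\lto F_{10} \oplus F_{01} \overset{\beta}\lto F_{11} \overset{+}\lto,
\]
and Lemma \ref{cocarttridis} says $F$ is cocartesian iff this triangle (read via $\alpha$) is distinguished, while Lemma \ref{carttridis} says $F$ is cartesian iff this same triangle (read via $\beta$) is distinguished.

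The heart of the argument is therefore the observation that these two triangles coincide, up to sign, as candidate triangles in $\D(\SA)$: the map $\alpha$ into $F_{10} \oplus F_{01}$ and the map $\beta$ out of it are, respectively, the first and second maps of one and the same three-term sequence. Concretely, I would note that in Lemma \ref{cocarttridis} the third term $F_{11}$ is identified with $\SC(\alpha)$, while in Lemma \ref{carttridis} the first term $F_{00}$ is identified with $\SC(\beta)[-1]$; in a triangulated category a triangle $X \to Y \to Z \overset{+}\to$ is distinguished if and only if its rotation $Z[-1] \to X \to Y \overset{+}\to$ is distinguished (axiom TR2, rotation of triangles, see \cite[Chap. 1, 1.1]{vtc}). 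Hence ``the triangle $F_{00} \to F_{10}\oplus F_{01} \to \SC(\alpha) \overset{+}\to$ is distinguished'' and ``the triangle $\SC(\beta)[-1] \to F_{10}\oplus F_{01} \to F_{11} \overset{+}\to$ is distinguished'' are logically equivalent once one knows the underlying maps agree, which is exactly what the explicit sign bookkeeping above provides.

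So the proof runs: by Proposition \ref{derpoint}, $\PDD_\SA$ is pointed. Given $F \in \PDD_\SA(\square)$, Proposition \ref{der5} lets us represent it by an honest commutative square of $q$-projective complexes, and then Lemma \ref{cocarttridis} characterizes cocartesianness by the triangle \eqref{cocarttri}. The same square, now resolved by $q$-injectives, is handled by Lemma \ref{carttridis}, which characterizes cartesianness by the triangle \eqref{carttri}. Since \eqref{cocarttri} and \eqref{carttri} are the two halves of a single candidate triangle $F_{00} \overset{\alpha}\to F_{10}\oplus F_{01} \overset{\beta}\to F_{11} \overset{+}\to$ — one being the rotation of the other after the identifications $F_{11} \cong \SC(\alpha)$ and $F_{00} \cong \SC(\beta)[-1]$ — the axiom TR2 yields that \eqref{cocarttri} is distinguished iff \eqref{carttri} is. Therefore $F$ is cocartesian iff it is cartesian, which is precisely stability in the sense of \cite[Definition 4.1]{gr13}.

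The main obstacle I anticipate is not conceptual but bookkeeping: one must be careful that the middle object $F_{10} \oplus F_{01}$ and the two maps $\alpha$, $\beta$ appearing in Lemmas \ref{cocarttridis} and \ref{carttridis} really are literally the same (with consistent signs) rather than merely ``the same up to isomorphism and rearrangement of summands'', since otherwise one only gets an isomorphism of candidate triangles and must invoke that distinguished triangles are closed under isomorphism as well. Making the sign conventions in $\alpha = \varphi_{0010}-\varphi_{0001}$ and $\beta = (\psi_{0010}~\psi_{0001})$ compatible — so that $\beta\alpha=0$ on the nose and the rotation matches — is the delicate point; everything else is a direct appeal to the already-established Lemmas and the rotation axiom.
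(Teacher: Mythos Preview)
Your proposal is correct and follows essentially the same route as the paper: both arguments reduce stability to the observation that Lemmas \ref{cocarttridis} and \ref{carttridis} characterize cocartesian and cartesian squares, respectively, via the \emph{same} candidate triangle $F_{00}\to F_{10}\oplus F_{01}\to F_{11}\overset{+}\to$ being distinguished. The only cosmetic difference is that the paper makes the comparison by passing explicitly from a $q$-projective resolution $P$ to a $q$-injective resolution $I$ and exhibiting an isomorphism of the two resulting triangles, whereas you argue more abstractly; your detour through the rotation axiom TR2 is not actually needed (and the paper does not use it), since the proofs of the two lemmas already produce the same triangle with both maps $\alpha$ and $\beta$ present, so the two distinguishedness conditions are literally identical rather than related by rotation.
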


\begin{proof}
By Proposition \ref{derpoint}, it is pointed. Let $F \in \PDD_{\!\!\SA }(\square)$ be cocartesian. For the sake of clarity denote by $P \to F$ a $q$-projective resolution and $F \to I$ a $q$-injective resolution. There are isomorphims $P_{i j} \iso I_{i j}$ in $\PDD_{\!\!\SA }(\es)$ for every $i, j \in \{0, 1\}$. By Lemma \ref{cocarttridis} the following triangle
\[
P_{00} \overset{\alpha_P}\lto  P_{10} \oplus P_{01} \overset{\beta_P}\lto  P_{11} \overset{+}\lto 
\]
is distinguished. 
There is an isomorphism of triangles
\[
\begin{tikzpicture}
\matrix (m) [matrix of math nodes,
row sep=3em, column sep=3em,
text height=2ex,
text depth=0.25ex]{
P_{00}  & P_{10} \oplus P_{01} & P_{11} & \ \\
I_{00}  & I_{10} \oplus I_{01} & I_{11} & \ \\
};
\path[->, font=\scriptsize]
(m-1-1) edge node [auto] {$\alpha_P$} (m-1-2)
        edge node [auto] {$\wr$}      (m-2-1)
(m-1-2) edge node [auto] {$\beta_P$}  (m-1-3)
        edge node [auto] {$\wr$}      (m-2-2)
(m-1-3) edge node [auto] {+}          (m-1-4)
        edge node [auto] {$\wr$}      (m-2-3)
(m-2-1) edge node [below] {$\alpha_I$} (m-2-2)
(m-2-2) edge node [below] {$\beta_I$} (m-2-3)
(m-2-3) edge node [auto] {+} (m-2-4);
\end{tikzpicture} 
\] 
Therefore the square $F$ is cartesian again by the equivalence provided by Lemma \ref{carttridis}. 

Reversing the argument, assuming that $F$ is cartesian, the lower triangle is distinguished by Lemma \ref{carttridis}. By the depicted isomorphism, so is the upper one. As a consequence, $F$ is cocartesian by Lemma \ref{cocarttridis}, and this completes the proof.
\end{proof}

\begin{cor}
 Let $\SA$ be a Grothendieck category with enough projective objects. The triangulated structure in $\PDD_{\!\!\SA }(\es)$ induced by the property of being stable agrees with the usual triangulation of $\D(\SA)$.
\end{cor}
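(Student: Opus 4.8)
The plan is to compare the two triangulated structures on the common underlying category $\PDD_\SA(\es) = \D(\SA)$ piece by piece: first the suspension functors, then the two classes of distinguished triangles. Throughout I use that, by Theorem \ref{main}, $\PDD_\SA$ is stable, so cocartesian and cartesian squares in $\PDD_\SA(\square)$ coincide and the machinery of \cite[Section 4]{gr13} equips $\PDD_\SA(\es)$ with its intrinsic triangulation.

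First I would treat the suspension. Recall from \cite{gr13} that the suspension $\Sigma X$ of $X \in \PDD_\SA(\es)$ is computed by completing the span $0 \ot X \lto 0$ to a cocartesian square with $X$, $0$ in the top row and $0$, $\Sigma X$ in the bottom row, and reading off the bottom-right vertex. Since by Theorem \ref{main} this square is bicartesian, Lemma \ref{cocarttridis} applies with $\alpha$ the zero map $X \to 0 \oplus 0$; by the Remark following that Lemma, $\Sigma X \iso \SC(\alpha)$, and $\SC$ of the zero map out of $X$ is the complex shift $X[1]$. Naturality of the square-completion construction makes this identification natural in $X$, so the derivator suspension $\Sigma$ is naturally isomorphic to the translation functor $[1]$ of $\D(\SA)$.

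Next I would compare distinguished triangles. By definition (\cite{gr13}), a triangle is distinguished in the derivator-triangulation iff it is isomorphic to one of the form $X \xto{f} Y \lto Z \overset{+}\lto \Sigma X$ obtained from a cocartesian square with $X$, $Y$ on top and $0$, $Z$ below, so that $Z$ is the cofiber of $f$. For such a square, Lemma \ref{cocarttridis} (with $\alpha = f$) produces a triangle $X \xto{f} Y \oplus 0 \lto Z \overset{+}\lto$ that is distinguished in the \emph{usual} triangulation of $\D(\SA)$, and the proof of that Lemma exhibits it as the triangle attached to the semi-split short exact sequence $0 \to X \to Y\oplus 0 \to Z \to 0$, i.e. (up to the canonical isomorphism $Z \iso \SC(f)$) the standard mapping-cone triangle of $f$, connecting morphism included. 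Hence every derivator-distinguished triangle is classically distinguished. Conversely, every classically distinguished triangle is isomorphic to the standard triangle $X \xto{f} Y \lto \SC(f) \overset{+}\lto X[1]$ of a map of complexes, and since $\SC(f)$ is a model for the cofiber of $f$ this triangle arises from a cocartesian square as above, hence is derivator-distinguished. Both structures thus have the same underlying category, the same shift (by the previous step), and the same distinguished triangles, so they agree.

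The main obstacle is not a deep one but a matter of care: one must check that the \emph{third} morphism of the triangle that the derivator attaches to a cocartesian square — a priori pinned down only up to the axioms of a stable derivator — really is the classical connecting map $\SC(f) \to X[1]$, rather than merely some map fitting into a distinguished triangle with the correct first two arrows. This is precisely what the explicit semi-split presentation in the proof of Lemma \ref{cocarttridis}, combined with axiom TR3, secures; with that Lemma in hand the Corollary reduces to unwinding the definitions of \cite{gr13}.
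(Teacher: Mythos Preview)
Your proposal is correct and follows essentially the same route as the paper: both arguments hinge on applying Lemma \ref{cocarttridis} to the cocartesian square built on a map $f \colon X \to Y$ with a zero in the lower-left corner, thereby identifying the derivator cofiber with the classical cone $C(f)$. Your write-up is a bit more explicit than the paper's---you separately verify $\Sigma \cong [1]$ and spell out both inclusions of distinguished triangles---while the paper simply notes that the derivator's third vertex $(\LL i_{\ul !}[\varphi_{\ul}])_{11}$ is, via Lemma \ref{cocarttridis}, the mapping cone, and leaves the rest implicit.
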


\begin{proof}
Let $\varphi \colon X \to Y$ be a morphism in $\D(\SA)$. Denote by $[\varphi]$ a preimage in $\D(\SA^{\dos})$ that exists by Proposition \ref{der5}. Let $j \colon \SH \to \ul$ be the canonical inclusion where $\SH$ is the subcategory corresponding to $(0,0) \to (0,1)$. Denote $[\varphi_{\ul}] := j_* [\varphi]$. Notice that the underlying diagram of $[\varphi_{\ul}]$ has the following shape
\[
\begin{tikzpicture} [baseline=(current  bounding  box.center)]
\matrix (m) [matrix of math nodes,
row sep=2em, column sep=2em,
text height=1.5ex, text depth=0.25ex]{
X  & Y \\
0  & \\          
};
\path[->,font=\scriptsize,>=angle 90]
(m-1-1) edge node [auto] {$\varphi$} (m-1-2)
        edge node [auto] {} (m-2-1);
\end{tikzpicture}
\]

According to \cite[Beginning of \S 4.2]{gr13} the third point in the distinguished triangle with base $\varphi$ induced by $\PDD_{\!\!\SA }$ is $(\LL{}i_{\ul !} j_* [\varphi])_{11} = (\LL{}i_{\ul !}[\varphi_{\ul}])_{11}$. In other words, applying Lemma \ref{cocarttridis} to the present situation, we obtain a distinguished triangle
\begin{equation*}
X \overset{\varphi}\lto Y \lto (\LL{}i_{\ul !}[\varphi_{\ul}])_{11} \overset{+}\lto 
\end{equation*}
which is identified with the distinguished triangle
\begin{equation*}
X \overset{\varphi}\lto Y \lto C(\varphi) \overset{+}\lto 
\end{equation*}
 associated to $\varphi$ by the usual triangulation of $\D(\SA) = \PDD_{\!\!\SA }(\es)$.
\end{proof}

\begin{rem} \noindent
\begin{enumerate}
 \item[1.] In the proof of the corollary for the construction of the cone, we might as well have used Lemma \ref{carttridis} for the construction of the (negatively shifted) cone.
 \item[2.] The same property applies to the categories $\PDD_{\!\!\SA }(\SI)$ for $\SI$ a small category by considering the corresponding shifted derivators as in \cite[Proposition 4.3]{gr13}.
\end{enumerate}
\end{rem}

\section{Description of smashing localizations}\label{cinco}

\begin{cosa}
 As an application of the framework given by the homological derivator, we place ourselves in the following situation. Let $A$ be a commutative noetherian ring. We will denote by $A\md$ its category of modules. We will continuously use all along the section the properties of the derivator $\PDD_{\!\!A\md}$.
\end{cosa}

\begin{cosa} \textbf{Localizing subcategories.}
For a triangulated category $\ST$ possessing coproducts, a triangulated subcategory stable for coproducts is called a localizing subcategory.

 According to \cite[Theorem 3.3.]{Nct}, localizing subcategories in $\D(A\md)$ correspond to subsets of $\spec(A)$. A localizing subcategory $\loc$ of $\D(A\md)$ is determined by the residue fields $\kappa(\ip)$ of primes $\ip \in \spec(A)$ that it contains. As a matter of fact, a localizing subcategory generated by a set of objects determines a localizing functor. If $Y \subset \spec(A)$, let $\loc_Y$ be the smallest localizing subcategory of $\D(A\md)$ which contains $\kappa(\ip)$ for all $\ip \in Y$. By \cite[Proposition 5.1]{AJS} there is a localization functor $\ell_Y \colon \D(A\md) \to \D(A\md)$ such that its kernel is $\loc_Y$. By \cite[Lemma 3.5.]{Nct}, the localizing functor $\ell_Y$ is characterized by
\begin{equation}\label{carlocY}
 \ell_Y(\kappa(\ip)) = 
\begin{cases} 
 \kappa(\ip) & \text{if } \ip \notin Y \\ 
           0 & \text{if } \ip \in Y .
\end{cases}
\end{equation}
\end{cosa}

\begin{cosa} \textbf{Smashing localizing subcategories.}
Let $Y$ be a specialization-closed subset, in other words, an arbitrary union of closed subsets of $\spec(A)$. It follows from \cite[Corollary 3.4.]{Nct} that in this case $\loc_Y$ is generated by compact objects, \ie perfect complexes of $A$-modules. The theorem that asserts the existence of $\ell_Y$ does not give a useful, manageable description of it. We will give an explicit description of $\ell_Y$ in terms of perfect complexes and standard functors.

Let us recall some facts from \cite[\S 1]{AJS} that will be convenient in order to achieve our goal. Associated to a localization $(\ell, \eta)$ there is an acyclization\footnote{Note that $\gamma$ is denoted $\ell^a$ in \cite[Proposition 1.6]{AJS}.} $(\gamma, \zeta)$ whose properties are dual to $(\ell, \eta)$ and such that they mutually determine each other. For every $M \in \D(A\md)$ there is a distinguished triangle
\[
\gamma M \xto{\zeta_M} M \xto{\eta_M} \ell M \overset{+}\lto 
\]
For the localizing subcategory $\loc_Y$, having in mind \eqref{carlocY}, the functor $\gamma_Y$ is characterized by
\begin{equation} \label{caracyc}
\gamma_Y(\kappa(\ip)) = 
\begin{cases} 
 \kappa(\ip) & \text{if } \ip \in Y \\ 
           0 & \text{if } \ip \notin Y .
\end{cases} 
\end{equation}
 
 We are going to give an explicit description of $\gamma_Y$. From this, one obtains immediately a description of $\ell_Y$.
\end{cosa}

\begin{cosa} \label{defdelsys}
\textbf{A category associated to a specialization closed subset.} 
The support of an ideal  $\ia \subset A$ is the set of primes in $\spec(A)$ that contain it, in other words, the support of the $A$-module $A/\ia$.
 
 Let us define a small category associated to the closed subsets that make up the specialization closed subset\footnote{A specialization closed subset is an arbitary union of closed subsets.} $Y$ and denote it by $\SI_Y$:
  
\begin{description}
 \item[Objects] Homomorphisms $\bff \colon A^n \to A$ ($n \in \NN$), such that the ideal $\Img(\bff)$ is supported into $Y$. 
 \item[Morphisms] An arrow $\overline{\varphi} \colon \bff \to \bgg$ is defined as a  homomorphism of free modules $\varphi \colon A^n \to A^m$ such that the triangle
\[
\begin{tikzpicture} [baseline=(current  bounding  box.center)]
\matrix (m) [matrix of math nodes,
row sep=2em, column sep=2.5em,
text height=1.5ex, text depth=0.25ex]{
A^n &   & A^m\\
    & A &  \\          
};
\path[->,font=\scriptsize,>=angle 90]
(m-1-1) edge node [auto] {$\varphi$} (m-1-3)
        edge node [auto, swap] {$\bff$} (m-2-2)
(m-1-3) edge node [auto] {$\bgg$} (m-2-2);
\end{tikzpicture}
\]
is commutative. 
\end{description}

It is customary to describe an object $\bff$ giving its values on the canonical basis of $A^n$, that is $\bff := (f_1, \dots, f_n)$. Notice that $\Img(\bff) = \langle f_1, \dots, f_n \rangle$. We will commit a slight abuse of notation and call \emph{the support of} $\bff$ the support of the ideal $\Img(\bff)$.
\end{cosa}

\begin{lem} \label{cofil}
 The category $\SI_Y$ is cofiltered.
\end{lem}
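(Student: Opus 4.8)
The plan is to verify directly the three defining conditions of a cofiltered category (equivalently, that $\SI_Y^{\op}$ is filtered in the sense of \cite[Definition 3.1.1]{ks}): that $\SI_Y$ is non-empty, that any two objects receive arrows from a common third object, and that any parallel pair of arrows is equalized by precomposition with a suitable arrow. I would take the three in turn.

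\emph{Non-emptiness.} The homomorphism $(1)\colon A\to A$ is always an object of $\SI_Y$, since $\Img((1))=A$ and $\supp(A/A)=\varnothing\subseteq Y$. (If $Y\neq\varnothing$ one may exhibit a less degenerate witness: choose a non-empty closed subset $\supp(A/I)\subseteq Y$ and, using that $A$ is noetherian, a finite generating set $I=\langle f_1,\dots,f_r\rangle$, so that $(f_1,\dots,f_r)\in\SI_Y$.) \emph{Common sources.} Given $\bff=(f_1,\dots,f_n)$ and $\bgg=(g_1,\dots,g_m)$ in $\SI_Y$, I would take
\[
\bhh:=(f_ig_j)_{1\le i\le n,\ 1\le j\le m}\colon A^{nm}\lto A,
\]
whose image is the product ideal $\Img(\bff)\cdot\Img(\bgg)$. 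Its support is $\supp(A/\Img(\bff))\cup\supp(A/\Img(\bgg))$, which lies in $Y$ precisely because $Y$ is specialization closed; hence $\bhh\in\SI_Y$. It carries arrows $\overline{\varphi}\colon\bhh\to\bff$ and $\overline{\psi}\colon\bhh\to\bgg$ given by the homomorphisms $e_{ij}\mapsto g_je_i\in A^n$ and $e_{ij}\mapsto f_ie_j\in A^m$, the defining triangles commuting by the termwise identity $\bff(g_je_i)=f_ig_j=\bgg(f_ie_j)$.

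\emph{Equalizing parallel arrows.} This is the point I expect to require the most care, and the main obstacle. Given parallel arrows $\overline{\varphi},\overline{\psi}\colon\bff\to\bgg$ — that is, $\varphi,\psi\colon A^n\to A^m$ with $\bgg\varphi=\bgg\psi=\bff$ — set $\delta:=\varphi-\psi$, so that $\bgg\circ\delta=0$. The natural candidate for an equalizing arrow is $\overline{\chi}\colon\bkk\to\bff$, where $\chi$ is a free presentation $A^p\epi\ker(\delta)\inc A^n$ (finite generation of $\ker\delta$ from noetherianity) and $\bkk:=\bff\circ\chi$: then $\Img(\chi)\subseteq\ker(\delta)$ forces $\varphi\chi=\psi\chi$, hence $\overline{\varphi}\,\overline{\chi}=\overline{\psi}\,\overline{\chi}$. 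What still has to be secured — and is the heart of the matter — is that $\bkk$ again lies in $\SI_Y$, i.e.\ that $\supp(A/\bff(\ker\delta))\subseteq Y$; this is not automatic, since $\bff(\ker\delta)$ may be a proper sub-ideal of $\Img(\bff)$ with larger support. The plan to handle it is to exploit that one is free to replace $\bff$ and $\bgg$ beforehand by a common refinement of themselves (produced by the previous step): precomposing $\overline{\varphi}$ and $\overline{\psi}$ with an arrow $\overline{\theta}\colon\bff'\to\bff$ replaces $\delta$ by $\delta\circ\theta$, and it suffices to equalize the refined pair $\overline{\varphi}\,\overline{\theta},\overline{\psi}\,\overline{\theta}\colon\bff'\to\bgg$, since the resulting arrow then equalizes the original pair after composition with $\overline{\theta}$. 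Choosing $\theta$ so that $\ker(\delta\theta)$ surjects onto $\Img(\bff')$ under $\bff'$ makes the support computation go through, and the specialization-closedness of $Y$ is used once more to keep $\bff'$ in $\SI_Y$. Carrying out this reduction carefully, keeping track of the compositions in $\SI_Y$, is the technical core; the remaining bookkeeping is routine.
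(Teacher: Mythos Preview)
Your treatment of non-emptiness and of common sources is correct and essentially matches the paper: the paper uses the same product-ideal construction (written there as $\bff\otimes\bgg$) to verify the cospan-completion axiom PS1, and handles connectedness via $\bff\oplus\bgg$.

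The equalizing step is where the real difficulty lies, and you are right to flag it. In fact the paper's own proof glosses over precisely this point: it builds the same candidate $\bhh=\bff\circ\psi$ with $\Img(\psi)=\ker(\varphi_1-\varphi_2)$ and then asserts that ``the support of $\bhh$ is contained in the support of $\bff$''. But $\Img(\bhh)\subseteq\Img(\bff)$ gives the \emph{reverse} inclusion of supports, so that assertion is unjustified --- your instinct that $\supp(A/\bff(\ker\delta))\subseteq Y$ ``is not automatic'' is exactly the issue.

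Unfortunately your proposed refinement cannot close the gap, and in fact the statement fails as written. Take $A=k[t]$, $Y=V(t)$, and set $\bff=(t,t)\colon A^2\to A$, $\bgg=(t,0)\colon A^2\to A$; both have image $(t)$, so both lie in $\SI_Y$. Define $\varphi_1,\varphi_2\colon A^2\to A^2$ by $\varphi_1(e_j)=e_1$ and $\varphi_2(e_j)=e_1+e_2$ for $j=1,2$; then $\bgg\varphi_i=\bff$, so $\overline{\varphi_1},\overline{\varphi_2}\colon\bff\to\bgg$ are parallel in $\SI_Y$. One computes $\ker(\varphi_1-\varphi_2)=A\cdot(e_1-e_2)$ and $\bff(e_1-e_2)=t-t=0$. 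Hence \emph{every} $\psi\colon A^p\to A^2$ with $\varphi_1\psi=\varphi_2\psi$ satisfies $\bff\psi=0$, so the only possible $\bhh$ is the zero map, whose support is $\spec(A)\not\subseteq Y$. Your precomposition trick does not help either: for any $\overline{\theta}\colon\bff'\to\bff$ one has $\theta(\ker(\delta\theta))\subseteq\ker\delta$, whence $\bff'(\ker(\delta\theta))=\bff\bigl(\theta(\ker(\delta\theta))\bigr)\subseteq\bff(\ker\delta)=0$ again. So neither your argument nor the paper's can be completed for $\SI_Y$ as defined; the equalizing axiom genuinely fails, and some modification of the morphisms of $\SI_Y$ (or passage to a cofinal subcategory) is needed before the lemma can hold.
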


\begin{proof}
 According to \cite[I, D\'efinition 2.7.]{sga41}, to ensure that it is pseudo-cofiltered, we have to check two conditions.
 
\begin{itemize}
 \item[PS 1.] A diagram in $\SI_Y$
\[
\begin{tikzpicture} [baseline=(current  bounding  box.center)]
\matrix (m) [matrix of math nodes,
row sep=1em, column sep=2.5em,
text height=1.5ex, text depth=0.25ex]{
    & \bff &  \\ 
    &      & \bhh \\
    & \bgg &  \\          
};
\path[->,font=\scriptsize,>=angle 90]
(m-1-2) edge node [auto] {$\overline{\varphi}$} (m-2-3)
(m-3-2) edge node [auto, swap] {$\overline{\psi}$} (m-2-3);
\end{tikzpicture}
\]
can be completed to a commutative diagram
\[
\begin{tikzpicture} [baseline=(current  bounding  box.center)]
\matrix (m) [matrix of math nodes,
row sep=1em, column sep=2.5em,
text height=1.5ex, text depth=0.25ex]{
                   & \bff &  \\ 
\bff \otimes \bgg  &      & \bhh \\
                   & \bgg &  \\          
};
\path[->,font=\scriptsize,>=angle 90]
(m-1-2) edge node [auto] {$\overline{\varphi}$} (m-2-3)
(m-2-1) edge node [auto] {$\overline{\id \otimes \bgg}$} (m-1-2)
        edge node [auto, swap] {$\overline{\bff \otimes \id}$} (m-3-2)
(m-3-2) edge node [auto, swap] {$\overline{\psi}$} (m-2-3);
\end{tikzpicture}
\]
Let us describe the new objects added. By $\bff \otimes \bgg \colon A^n \otimes A^m \to A$ we denote the covector defined by $(\bff \otimes \bgg)(e \otimes e') := \bff(e) \bgg(e')$ for $e \in A^n$ and $e' \in A^m$. Notice that 
\[
\Img(\bff \otimes \bgg) = \langle f_ig_j \,/ \, i \in \{1,\dots, n\}, j \in \{1,\dots, m\}\rangle.
\] 
It is clear that the support of $\bff \otimes \bgg$ is contained in $Y$, in fact, it is the union of  the supports of $\bff$ and $\bgg$. The maps $\overline{\id \otimes \bgg}$ and $\overline{\bff \otimes \id}$ are also described similarly.

 \item[PS 2.] Let $\overline{\varphi}_1, \overline{\varphi}_2 \colon \bff \toto \bgg$ be two parallel arrows in $\SI_Y$. The axiom asks for an arrow that makes them equal. Let $N = \ker(\varphi_1 - \varphi_2)$. As $N$ is finitely generated, we may choose a surjection $A^p \epi N$ and let $\psi$ be the composition $A^p \epi N \to A^n$. Let $\bhh := \bff \circ \psi$. We have thus defined a morphism $\overline{\psi} \colon \bhh \to \bff$. It is clear that $\varphi_1 \psi = \varphi_2 \psi$. So, to conclude, we need to check that $\bhh \in \SI_Y$. But this is clear because the support of $\bhh$ agrees with the support of $\bff$.
\end{itemize}
 
To prove that it is cofiltered, observe that the category is clearly non empty, so it remains to prove that it is connected. Let $\bff$ and $\bgg$ be objects of $\SI_Y$. Let $\bff \oplus \bgg$ be the covector associated to the sequence $f_1, \dots, f_n, g_1, \dots, g_m$. The following diagram in $\SI_Y$ will suffice
 \[
\begin{tikzpicture} [baseline=(current  bounding  box.center)]
\matrix (m) [matrix of math nodes,
row sep=1em, column sep=2.5em,
text height=1.5ex, text depth=0.25ex]{
   & \bff &  \\ 
   &      & \bff \oplus \bgg \\
   & \bgg &  \\          
};
\path[->,font=\scriptsize,>=angle 90]
(m-1-2) edge  (m-2-3)
(m-3-2) edge  (m-2-3);
\end{tikzpicture}
\]
if we prove that $\bff \oplus \bgg \in \SI_Y$. The support of $\bff \oplus \bgg$ is the intersection of the supports of $\bff$ and $\bgg$, therefore it is contained in $Y$.
This completes the proof.
\end{proof}

\begin{cosa} \textbf{A system of Koszul complexes associated to $Y$.}
We denote by $K(\bff)$ the Koszul complex associated to $\bff$. See \cite[\S 17.4]{Eca}. As a graded module this is just $\bigwedge^{\bullet}\!A^n$, the exterior algebra of $A^n$ \emph{with the opposite grading}. Its differential is described shortly as contraction with the covector $\bff$. For an explicit description of the differential see \emph{loc. cit.} where it is denoted $\delta_{\bff}$. Notice that these complexes are perfect, as they are bounded and their objects are finitely generated free modules.

Let $\overline{\varphi} \colon \bff \to \bgg$ be  an arrow  in $\SI_Y$. It induces a map $K(\overline{\varphi}) \colon K(\bff) \to K(\bgg)$ which is just $\bigwedge^{\bullet}\!\varphi$. It is straightforward to check that it commutes with the differentials. This construction defines a functor $\SK_Y \colon \SI_Y \to \CCC(A\md)$.
\end{cosa}

\begin{cosa} \textbf{Algebraic supports on $Y$.}
 We define, using the category $\SI_Y$, a functor that assigns to a complex $M$ in the derived category, roughly speaking, the part supported in $Y$. Consider first the functor 
\[
\SK'_Y(M) \colon \SI_Y^{\op} \lto \CCC(A\md)
\]
defined by
\[
\SK'_Y(M)(\bff) = \Hom^{\bullet}(K(\bff), M).
\]
Notice that it yields a functor:
\[
\SK'_Y \colon \CCC(A\md) \lto \CCC\left(A\md^{\,\SI_Y^{\op}}\right)
\]
whose derived functor
\[
\R{}\SK'_Y \colon \D(A\md) \lto \D\left(A\md^{\,\SI_Y^{\op}}\right)
\]
satisfies that $\R{}\SK'_Y(M)$ corresponds to the diagram $\bff \mapsto \R{}\Hom^{\bullet}(K(\bff), M)$. The functor of complex of algebraic supports on $Y$ is an endofunctor 
\[
\SGamma_Y \colon \D(A\md) \to \D(A\md) \quad \text{defined as} \quad 
\SGamma_Y(M) = \LL{}\shdirlim{}\, \R{}\SK'_Y(M).
\]

Let us unfold this definition. Notice that $K(\bff)$ are $q$-projective because they are bounded complexes of free modules, therefore 
\[
\R{}\Hom^{\bullet}(K(\bff), M) \cong \Hom^{\bullet}(K(\bff), M).
\] 
On the other hand, by Lemma \ref{cofil}, the category $\SI_Y^{\op}$ is filtered, therefore directed colimits with values in modules are exact (\cfr~\cite[III, Corollaire 2.10.]{sga41}). In conclusion, the object $\SGamma_Y(M)$ may be represented in the derived category as
\begin{equation} \label{cartors}
 \SGamma_Y(M) = \dirlim{\bff \in \SI_Y}\!\Hom^{\bullet}(K(\bff), M).
\end{equation}
\end{cosa}

\begin{cosa} \textbf{The canonical transformation}.
With the previous notations, take for $\bff \in \SI_Y$ the natural morphism 
\begin{equation}\label{pseudoepi}
p_{\bff} \colon A \to K(\bff)
\end{equation}
which is the identity on degree 0. It induces a natural transformation
\begin{equation}\label{zetanat}
 \zeta_Y \colon \SGamma_Y(M) \lto M
\end{equation}
applying the contravariant functor $\shdirlim{}\Hom^{\bullet}(-, M)$ to the morphism \eqref{pseudoepi}.
\end{cosa}

\begin{prop}\label{gamistor}
 The pair $(\SGamma_Y, \zeta_Y)$ is an acyclization functor.
\end{prop}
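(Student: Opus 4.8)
The plan is to verify that the pair $(\SGamma_Y, \zeta_Y)$ satisfies the characterization of an acyclization functor: equivalently (by the discussion in \cite[\S 1]{AJS}), that $\SGamma_Y$ is an idempotent triangulated endofunctor whose essential image is the localizing subcategory $\loc_Y$, with $\zeta_Y$ the structural counit. Since acyclizations are determined by the localizing subcategory that is their essential image, and $\loc_Y$ is characterized by \eqref{caracyc}, it suffices to prove two things: (1) $\SGamma_Y(M) \in \loc_Y$ for every $M \in \D(\SA)$, and the cofiber of $\zeta_M$ lies in the right-orthogonal (equivalently, $\SGamma_Y(\kappa(\ip)) = \kappa(\ip)$ for $\ip \in Y$ and $=0$ for $\ip \notin Y$); (2) $\zeta_Y$ exhibits $\SGamma_Y(M)$ as a colocalization, i.e. $\SGamma_Y(\zeta_M)$ is an isomorphism and $\SGamma_Y$ kills the cofiber of $\zeta_M$.

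\textbf{First I would} establish the key finiteness input: for a single $\bff = (f_1,\dots,f_n)$ with support in $Y$, the complex $\Hom^\bullet(K(\bff),M)$ computes the derived functor $\R\Gamma_{\langle\bff\rangle}$-type object only after passing to the colimit; more precisely, following the classical picture (\cite{AJS}, \cite{Eca}), for a \emph{closed} subset $V(\ia)$ one has $\R\Gamma_{V(\ia)}(M) \cong \dirlim{t} \Hom^\bullet(K(\bff^t),M)$ where $\bff^t = (f_1^t,\dots,f_n^t)$, and these "powers" are cofinal among Koszul complexes supported in $V(\ia)$. Since $Y = \bigcup_\lambda V(\ia_\lambda)$ is an arbitrary union of closed sets, and $\SI_Y$ by Lemma \ref{cofil} is cofiltered (so $\SI_Y^{\op}$ is filtered), the colimit \eqref{cartors} glues these local pieces: $\SGamma_Y(M) = \dirlim{\bff\in\SI_Y}\Hom^\bullet(K(\bff),M)$ is a filtered homotopy colimit of the $\R\Gamma_{V(\langle\bff\rangle)}(M)$. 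Because each $K(\bff)$ is a perfect complex whose homology is supported in $V(\langle\bff\rangle)\subseteq Y$, each $\Hom^\bullet(K(\bff),M)$ lies in the localizing subcategory generated by perfect complexes supported in $Y$, which by \cite[Corollary 3.4]{Nct} is exactly $\loc_Y$; and $\loc_Y$, being localizing, is closed under the filtered homotopy colimit $\LL\shdirlim{}$. This gives $\SGamma_Y(M)\in\loc_Y$.

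\textbf{Next} I would compute on residue fields to pin down $\zeta_Y$. For $\ip\notin Y$: every $\bff\in\SI_Y$ has $\langle\bff\rangle\not\subseteq\ip$, so some $f_i$ is invertible in $\kappa(\ip)$, forcing $K(\bff)\otimes\kappa(\ip)$ to be acyclic, hence $\Hom^\bullet(K(\bff),\kappa(\ip))\simeq 0$; passing to the colimit, $\SGamma_Y(\kappa(\ip)) = 0$. For $\ip\in Y$: there is $\bff_0\in\SI_Y$ with $\langle\bff_0\rangle\subseteq\ip$ minimally (e.g. generators of an ideal defining a closed set through $\ip$ inside $Y$), and among such the powers $\bff_0^t$ are cofinal; the standard computation gives $\dirlim{t}\Hom^\bullet(K(\bff_0^t),\kappa(\ip)) \simeq \kappa(\ip)$ with the canonical map from $\zeta_Y$ being the identity. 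Thus $\SGamma_Y$ satisfies exactly \eqref{caracyc}, which together with step (1) and \cite[Lemma 3.5]{Nct} forces $\SGamma_Y$ to agree with the acyclization $\gamma_Y$ and $\zeta_Y$ with $\zeta$. Idempotence and the colocalization property for $\zeta_Y$ then follow formally, since an acyclization is uniquely characterized by its essential image $\loc_Y$.

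\textbf{The hard part will be} the cofinality argument underlying step (3): showing that for $\ip \in Y$ the "power system" $\{\bff_0^t\}_t$ — or more robustly, the Koszul complexes $K(\bff)$ with $\sqrt{\langle\bff\rangle}$ cutting out a closed set through $\ip$ — is cofinal in $\SI_Y$ as seen by $\kappa(\ip)$, and that the resulting colimit is genuinely $\kappa(\ip)$ and not merely "supported at $\ip$". This requires care because $\SI_Y$ does not literally contain the maps $f_i \mapsto f_i^t$ as honest morphisms in the sense of \ref{defdelsys} unless $\langle f_i^t\rangle \subseteq \langle f_1,\dots,f_n\rangle$; one resolves this by noting $\langle\bff^t\rangle\subseteq\langle\bff\rangle$ always holds, so $\bff^t\to\bff$ (lifting $e_i\mapsto f_i^{t-1}e_i$) is a legitimate arrow, and then invoking the classical Koszul-cofinality (\cite[\S 17.4]{Eca}, \cite{AJS}) to identify the colimit. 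The remaining verifications — that $\zeta_Y$ as defined in \eqref{zetanat} is natural and triangulated, that $\SGamma_Y$ preserves triangles (it is a composite of the triangulated $\R\SK'_Y$ and the triangulated $\LL\shdirlim{}$) — are routine.
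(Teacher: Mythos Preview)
Your approach is genuinely different from the paper's, and it contains a circularity that the paper avoids.

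The paper proves the proposition by directly checking the idempotence criterion of \cite[Proposition~1.7]{AJS}: it shows $\SGamma_Y\SGamma_Y(M)\cong\SGamma_Y(M)$ via the elementary identity $K(\bff)\otimes K(\bgg)\cong K(\bff\oplus\bgg)$ (exterior algebra takes coproducts to tensor products), and then verifies $\SGamma_Y(\zeta_M)=\zeta_{\SGamma_Y(M)}$ by inspecting the obvious commutative square. No residue fields appear at all; the computation on $\kappa(\ip)$ is deferred to Theorem~\ref{mainloc}, which \emph{uses} Proposition~\ref{gamistor} as input.

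Your plan, by contrast, tries to prove $\SGamma_Y$ is an acyclization by identifying it with the known acyclization $\gamma_Y$ through the residue-field characterization \eqref{caracyc}. The gap is the claimed equivalence in your item~(1): knowing $\SGamma_Y(\kappa(\ip))$ agrees with $\gamma_Y(\kappa(\ip))$ is \emph{not} equivalent to knowing that the cofiber of $\zeta_M$ lies in $\loc_Y^\perp$ for arbitrary $M$. Neeman's \cite[Lemma~3.5]{Nct} says that a localization (or acyclization) functor is determined by its values on residue fields---but it presupposes that the functor in question already \emph{is} a localization. Your sentence ``Idempotence and the colocalization property for $\zeta_Y$ then follow formally'' is therefore circular: you need idempotence (or equivalently $\SL_Y(M)\in\loc_Y^\perp$ for all $M$) to invoke the uniqueness, not the other way around.

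To repair your approach you would have to show directly that $\SL_Y(M)\in\loc_Y^\perp$ for every $M$, i.e.\ that $\Hom^\bullet(K(\bgg),\SL_Y(M))$ is acyclic for each $\bgg\in\SI_Y$. Unwinding this with compactness of $K(\bgg)$ and the definition of $\SL_Y$ brings you right back to the identity $K(\bgg)\otimes K(\bff)\cong K(\bgg\oplus\bff)$ and a cofinality argument---that is, to the paper's idempotence computation. So the residue-field work, while correct and eventually needed for Theorem~\ref{mainloc}, does not shortcut the present proposition.
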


\begin{proof}
 In view of \cite[Proposition 1.7]{AJS} we have to check that $\SGamma_Y$ is idempotent via $\zeta_Y$ \eqref{zetanat} and the two possible realizations of idempotence agree.
 To check idempotence, let $M \in \D (A\md)$. Consider the chain of isomorphisms:
 
\begin{align}
 \SGamma_Y\SGamma_Y(M) & = \dirlim{\bff \in \SI_Y}\Hom^{\bullet}(K(\bff), \dirlim{\bgg \in \SI_Y}\Hom^{\bullet}(K(\bgg), M)) \label{zeroth} \\
            & \cong \dirlim{\bff, \bgg \in \SI_Y}\Hom^{\bullet}(K(\bff) \otimes K(\bgg), M)
                                                                          \label{prim}\\
            & \cong \dirlim{\bff, \bgg \in \SI_Y}\Hom^{\bullet}(K(\bff \oplus \bgg), M) \label{sec}\\
            & \cong \dirlim{\bhh \in \SI_Y}\Hom^{\bullet}(K(\bhh), M) \label{terc}\\
            & = \SGamma_Y(M).  \notag  
\end{align}
Notice that the isomorphism \eqref{zeroth} is obtained applying twice \eqref{cartors} using finiteness of the Koszul complex, \eqref{prim} by adjunction and exactness of cofiltered colimits and \eqref{sec} because the exterior algebra transforms coproducts into tensor products. The last isomorphism, \eqref{terc} is just a relabelling of the objects in $\SI_Y$.

To end the proof we have to check that $\SGamma_Y(\zeta_M) = \zeta_{\SGamma_Y(M)}$. This follows for the commutativity of the following square
\[
\begin{tikzpicture}
\matrix (m) [matrix of math nodes,
row sep=3em, column sep=3em,
text height=2ex,
text depth=0.25ex]{
\SGamma_Y\SGamma_Y(M) & \SGamma_Y(M)  \\
\SGamma_Y(M)          & M  \\
};
\path[->, font=\scriptsize]
(m-1-1) edge node[above] {$\SGamma_Y(\zeta_M)$} (m-1-2)
        edge node[left] {$\zeta_{\SGamma_Y(M)}$} (m-2-1)
(m-1-2) edge node[right] {$\zeta_M$} (m-2-2)
(m-2-1) edge node[below] {$\zeta_M$} (m-2-2);
\end{tikzpicture}  
\]
Unravelling the definitions through \eqref{cartors} and applying \eqref{sec}, the square becomes
\[
\begin{tikzpicture}
\matrix (m) [matrix of math nodes,
row sep=3em, column sep=4em,
text height=2ex,
text depth=0.25ex]{
\shdirlim{}\Hom^{\bullet}(K(\bff \oplus \bgg), M) & \shdirlim{}\Hom^{\bullet}(K(\bff), M)  \\
\shdirlim{}\Hom^{\bullet}(K(\bgg), M)             & M  \\
};
\path[->, font=\scriptsize]
(m-1-1) edge node[above] {via $\bff \to \bff \oplus \bgg $} (m-1-2)
        edge node[left] {via $\bgg \to \bff \oplus \bgg$} (m-2-1)
(m-1-2) edge node[right] {via $p_{\bff}$} (m-2-2)
(m-2-1) edge node[below] {via $p_{\bgg}$} (m-2-2);
\end{tikzpicture}  
\]
that is clearly commutative.
\end{proof}

\begin{cosa} \textbf{The localization triangle associated to $\SGamma_Y$}.
Let $p_{\bff} \colon A \to K(\bff)$ be the morphism defined in \eqref{pseudoepi}. It is clear that it has a graded section so it is inserted in a so-called semi-split exact sequence of complexes
\begin{equation}\label{kaidce}
 0 \lto A \lto K(\bff) \lto C(\bff)[1] \lto 0.
\end{equation}

Recall the notation of brutal truncations, \ie for $M \in \CCC(A\md)$, define $\sigma^{\geq n} M$ as the subcomplex of the complex $M$ such that $(\sigma^{\geq n} M)^i = 0$  if $i<n$ and $(\sigma^{\geq n} M)^i = M^i$ if $i \geq n$. Also define $\sigma^{\leq n+1} M := M/\sigma^{\geq n} M$.

With this notation, $C(\bff)= \sigma^{\leq -1}K(\bff)[-1]$. The colimit in $n$ of the complexes  $C(\bff^n)$ taking appropriate powers of $\bff$ is identified with the \v{C}ech complex on the complementary open subset to the support of $\langle \bff \rangle$, but we will not make use this fact. 

From the sequence \eqref{kaidce}, by translation, we obtain a distinguished triangle in $\D(A\md)$
\[
C(\bff) \lto A \lto K(\bff) \overset{+}\lto .
\]
Applying the contravariant functor $\shdirlim{}\Hom^{\bullet}(-, M)$ as before we obtain the distinguished triangle
\[
\SGamma_Y(M) \lto M \lto \SL_Y(M) \overset{+}\lto 
\]
where we define
\begin{equation}\label{carloc}
 \SL_Y(M) = \dirlim{\bff \in \SI_Y}\!\Hom^{\bullet}(C(\bff), M)
\end{equation}
similarly to \eqref{cartors}. If follows from Proposition \ref{gamistor} that $\SL_Y$ is a localization functor in view of \cite[Proposition 1.6]{AJS}. 
\end{cosa}

\begin{thm} \label{mainloc}
 Let $A$ be a noetherian ring, $Y \subset \spec(A)$ a specialization closed subset and $\ell_Y$ the corresponding localization given by \cite{Nct}, then $\SL_Y = \ell_Y$.
\end{thm}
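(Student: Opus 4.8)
The plan is to identify $\SL_Y$ with $\ell_Y$ by computing what $\SGamma_Y$ (equivalently $\SL_Y$) does to the residue fields $\kappa(\ip)$ and matching the outcome against the characterisation \eqref{carlocY}. The first step would be to note that $\SL_Y$ is a \emph{smashing} localisation: each $\Hom^\bullet(K(\bff),-)$ commutes with coproducts because $K(\bff)$ is a perfect complex, and filtered colimits commute with coproducts, so by \eqref{cartors} the functor $\SGamma_Y$, and hence $\SL_Y$, preserves coproducts. Therefore $\ker\SL_Y$ is a localising subcategory of $\D(A)$; by \cite[Theorem 3.3]{Nct} it is $\loc_S$ for a unique $S\subseteq\spec(A)$, and a localisation functor is determined up to canonical isomorphism by its kernel. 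So it is enough to show that $\SL_Y(\kappa(\ip))=0$ exactly when $\ip\in Y$, for then $\ker\SL_Y=\loc_Y=\ker\ell_Y$. Through the triangle $\SGamma_Y(M)\xto{\zeta_Y}M\to\SL_Y(M)\xto{+}$ this reduces to two assertions: $\SGamma_Y(\kappa(\ip))=0$ when $\ip\notin Y$, and $\zeta_Y\colon\SGamma_Y(\kappa(\ip))\to\kappa(\ip)$ is an isomorphism when $\ip\in Y$.

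Next I would analyse a single Koszul term. For $\bff=(f_1,\dots,f_n)\in\SI_Y$, since $K(\bff)$ is a bounded complex of finitely generated free modules one has $\Hom^\bullet(K(\bff),\kappa(\ip))\cong\Hom^\bullet(K(\bff),A)\otimes_A\kappa(\ip)$; here $\Hom^\bullet(K(\bff),A)$ is the complex $\bigl(\bigwedge^\bullet(A^n)^\vee,\ \bff\wedge-\bigr)$, so $\Hom^\bullet(K(\bff),\kappa(\ip))$ is $\bigwedge^\bullet\kappa(\ip)^n$ with differential exterior multiplication by the reduction $\bar\bff$ of $\bff$, and the map to $\kappa(\ip)=\Hom^\bullet(A,\kappa(\ip))$ induced by \eqref{pseudoepi} is the projection onto the $\bigwedge^0$ summand. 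There is a clean dichotomy governed by $\supp(\bff)=V(\Img(\bff))$. If $\ip\notin\supp(\bff)$, i.e. $\Img(\bff)\not\subseteq\ip$, then $\bar\bff\neq0$ and exterior multiplication by a nonzero vector admits a contracting homotopy, so $\Hom^\bullet(K(\bff),\kappa(\ip))$ is acyclic. If $\ip\in\supp(\bff)$, i.e. $\Img(\bff)\subseteq\ip$, then $\bar\bff=0$, the differential vanishes, $H^j\Hom^\bullet(K(\bff),\kappa(\ip))=\bigwedge^j\kappa(\ip)^n$, and the map to $\kappa(\ip)$ is an isomorphism on $H^0$.

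Then I would pass to the filtered colimit \eqref{cartors}, using that cohomology commutes with it (filtered colimits of modules are exact and the $K(\bff)$ are $q$-projective). If $\ip\notin Y$, every $\bff\in\SI_Y$ has $\supp(\bff)\subseteq Y$, hence $\ip\notin\supp(\bff)$, every term is acyclic, so $\SGamma_Y(\kappa(\ip))=0$ and $\SL_Y(\kappa(\ip))\cong\kappa(\ip)\neq0$. If $\ip\in Y$, the crucial ingredient is the transition map obtained by raising powers: for $\bff$ with $\Img(\bff)\subseteq\ip$, the map $\mathrm{diag}(f_1,\dots,f_n)$ gives an arrow $\bff^{(2)}:=(f_1^2,\dots,f_n^2)\to\bff$ in $\SI_Y$ (same radical, hence same support), and on cohomology the induced map $H^j\Hom^\bullet(K(\bff),\kappa(\ip))\to H^j\Hom^\bullet(K(\bff^{(2)}),\kappa(\ip))$ is $\bigwedge^j$ of multiplication by the $f_i$, which is $0$ for $j\geq1$ because $f_i\equiv0\pmod{\ip}$ and the identity for $j=0$. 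Combined with the dichotomy (every $\bff$ either has $\Img(\bff)\subseteq\ip$ or contributes an acyclic term) this forces $H^j\SGamma_Y(\kappa(\ip))=0$ for $j\geq1$, and $H^0\SGamma_Y(\kappa(\ip))\to\kappa(\ip)$ to be an isomorphism: it is surjective because $\SI_Y$ contains the finite sequence of generators of the ideal $\ip$, whose support $V(\ip)$ lies in $Y$ as $Y$ is specialisation closed, and it is injective because at each term the map $H^0\to\kappa(\ip)$ is either zero on an acyclic term or an isomorphism. Hence $\zeta_Y$ is a quasi-isomorphism for $\ip\in Y$, so $\SL_Y(\kappa(\ip))=0$, and the two localisations agree.

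The main obstacle is the case $\ip\in Y$: a fixed term $\Hom^\bullet(K(\bff),\kappa(\ip))$ carries the unwanted higher cohomology $\bigwedge^{\geq1}\kappa(\ip)^n$, and one must see that it disappears only after passing to the colimit. The powering trick $\bff\rightsquigarrow\bff^{(2)}$ is what makes this work, precisely because the entries of $\bff$ become zero in $\kappa(\ip)$; the remaining ingredients (duality of $K(\bff)$, contractibility of the Koszul complex on a unit, exactness of filtered colimits, and Neeman's classification of localising subcategories) are standard.
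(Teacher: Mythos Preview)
Your proof is correct and follows essentially the same route as the paper: both reduce to checking the behaviour of $\SGamma_Y$ on residue fields via Neeman's classification, dispatch the case $\ip\notin Y$ by contractibility of the Koszul complex on a sequence containing a unit, and for $\ip\in Y$ compute the filtered colimit degree by degree, killing the spurious higher cohomology by exhibiting a transition map in $\SI_Y$ that vanishes modulo $\ip$ in positive degrees.

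The only substantive difference is the choice of that transition map. You use the squaring arrow $\bff^{(2)}\to\bff$ via $\mathrm{diag}(f_1,\dots,f_n)$, which works when $\Img(\bff)\subset\ip$, and handle the remaining $\bff$ separately as acyclic terms. The paper instead fixes one $\bhh\in\SI_Y$ with $\Img(\bhh)\subset\ip$ and uses the arrow $\bff\otimes\bhh\to\bff$ given by $e_i\otimes e_j\mapsto h_je_i$; this has the mild advantage of working uniformly for \emph{every} $\bff\in\SI_Y$, since the entries of $\bigwedge^{\geq 1}\varphi_{\bhh}$ are multiples of the $h_j\in\ip$ regardless of $\bff$, so no case split is needed. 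Your opening paragraph on smashing is correct but not strictly necessary, as the characterisation \eqref{caracyc} of $\gamma_Y$ already pins down the acyclization once you know its values on residue fields.
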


\begin{proof}
Proving that $\SL_Y  = \ell_Y$ is equivalent to see that $\SGamma_Y = \gamma_Y$. As remarked before, it is enough to check that $\SGamma_Y$ satisfies \eqref{caracyc}, \ie~ how $\SGamma_Y$ acts on residue fields.
 
 Let $\ip \in \spec(A)$. Let us look first at the case $\ip \notin Y$. This is equivalent to the fact that $\langle \bff \rangle \not\subset \ip$ for every $\bff \in \SI_Y$. We have to show that $\SGamma_Y( \kappa(\ip)) = 0$. Let $K^{\vee}(\bff) := \Hom^{\bullet}(K(\bff), A)$. The Koszul complex is auto-dual up to shift by \cite[Proposition 17.15.]{Eca}, namely, we have that $K^{\vee}(\bff) \cong K(\bff)[-n]$. We will use this fact without further notice. It follows that to see that
 \[
 \dirlim{\bff \in \SI_Y}\Hom^{\bullet}(K(\bff),  \kappa(\ip)) = 0
 \]
it suffices to show that $\Hom^{\bullet}(K(\bff),  \kappa(\ip)) =  K^{\vee}(\bff) \otimes \kappa(\ip)= 0$, for every $\bff$ such that $\Img(\bff) \not\subset \ip$. But we may represent $\bff = (f_1, \dots, f_n)$, as $\bff = f_1 \oplus \dots \oplus f_n$, then $K(\bff) = K(f_1) \otimes \dots \otimes K(f_n)$. Also $K^{\vee}(f_i) = K(f_i)[-1]$ for all $i \in \{1, \dots, n\}$. With all this in mind, the fact that $\langle \bff \rangle \not\subset \ip$ implies that there is an $i \in \{1, \dots, n\}$ such that $f_i \notin \ip$. 
 
We are now bound to prove that $K(f_i) \otimes \kappa(\ip) = 0$. By \cite[Lemma 17.13.]{Eca} the self map $\mu_{f_i} \colon K(f_i) \to K(f_i)$ defined by multiplication by $f_i$ is homotopic to 0, and therefore so is $\mu_{f_i}\otimes \id  \colon K(f_i) \otimes \kappa(\ip) \to K(f_i) \otimes \kappa(\ip)$. But this map is identified with $\mu_{f_i} \colon (K(f_i) \otimes A/\ip))_{\ip} \to (K(f_i) \otimes A/\ip))_{\ip}$ that is an isomorphism with inverse $\mu_{1/f_i}$ that exists because $f_i \notin \ip$. Thus the conclusion follows.

Let us now treat the case $\ip \in Y$, in other words, there is an $\bhh \in \SI_Y$ with $\bhh = (h_1, \dots, h_m)$ such that $\Img(\bhh) \subset \ip$. We have to show that $\SGamma_Y(\kappa(\ip)) = \kappa(\ip)$. We have the following chain of isomorphisms
\begin{align*}
\dirlim{\bff \in \SI_Y}\Hom^{\bullet}(K(\bff), \kappa(\ip)) 
           & \cong \dirlim{\bff \in \SI_Y} (K^{\vee}(\bff) \otimes \kappa(\ip))
                     \tag{$K(\bff)$ is perfect}\\
           & \cong \dirlim{\bff \in \SI_Y} (K^{\vee}(\bff) \otimes A/\ip)_{\ip} \\
           & \cong \dirlim{\bff \in \SI_Y} (K(\bff) \otimes A/\ip)_{\ip}[-n].
                     \tag{selfduality of $K(\bff)$}
\end{align*}
Let us compute the homologies of this last complex. By exactness of filtered direct limits, it is enough to compute the limits at each degree.

To compute at the zero level, we notice that $ H^0(K(\bff)) = A/\langle \bff \rangle$ for $\bff \in \SI_Y$, so $H^0(K(\bff) \otimes  \kappa(\ip)) = \kappa(\ip)$, therefore 
\[
\h^0\big(\!\!\dirlim{\bff \in \SI_Y}\Hom^{\bullet}(K(\bff), \kappa(\ip))\big) = \kappa(\ip).
\]
Now let $i \in \NN$ with $i \in \{1, \dots, n\}$. Then $K^{-i}(\bff) = \bigwedge^{i}A^n$. Let $\varphi_{\bhh} \colon A^n \otimes A^m \to A^n$ defined by $\varphi_{\bhh}(e_i \otimes e_j) = h_j e_i$, where $m$ denotes the length of the sequence $\bhh$. It is clear that for every $\bff \in \SI_Y$, it holds that $\bff \circ \varphi_{\bhh} = \bff \otimes \bhh$. The corresponding map in $\SI_Y$ is denoted $\overline{\varphi_{\bhh}} \colon \bff \otimes \bhh \to  \bff$. It induces a map 
\[
K(\overline{\varphi}_{\bhh}) \colon K(\bff \otimes \bhh ) \to K(\bff)
\]
that, after tensoring by $A/\ip$, at each level (identifying $A^n \otimes A^m$ with $A^{n m}$) is
\[
\bigwedge^{i}\varphi_{\bhh} \otimes \id \colon \bigwedge^{i}A^{n m} \otimes A/\ip 
                                  \lto \bigwedge^{i}A^n \otimes A/\ip
\]
with $i > 0$. But $\Img(\bhh) \subset \ip$ implies that the transition maps of the diagram $\bigwedge^{i}\varphi_{\bhh} \otimes \id$ whose entries are multiples of the components of $\bhh$, act as zero. In conclusion
\[
\h^i\big(\!\!\dirlim{\bff \in \SI_Y}\Hom^{\bullet}(K(\bff), \kappa(\ip))\big) = 0
\]
for every $i \neq 0$, and this finishes the proof.
\end{proof}

\begin{cosa}\textbf{Smashing localization.}\label{smash}
 According to \cite[Corollary 3.4.]{Ntc} among the localizations in $\D(A\md)$, the smashing localizations correspond to specialization closed subsets $Y \subset \spec(A)$. We remind the reader that a localization is smashing if the localization functor commutes with coproducts. With our description, it is clear that $\SL_Y = \ell_Y$ commutes with coproducts. A feature of this description is that only compact objects in $\D(A\md)$ (\ie~perfect complexes) are used to compute localizations and acyclizations.
\end{cosa}

\begin{cosa}\textbf{An alternative approach.}
 A different method from \ref{defdelsys} in order to describe $\ell_Y$ can be given in terms of the ideals defining the closed subsets whose union equals the specialization closed subset $Y$. Let $\SJ_Y$ be the set of ideals of $A$ whose support is contained in $Y$. This is a poset so it can be viewed as a small category. It is also easy to see that it is cofiltered. Let the functors $\SGamma'_Y$ and $\SL'_Y$ be defined by the following
\begin{align}
 \SGamma'_Y(M) &= \dirlim{\ia \in \SJ_Y}\R\Hom^{\bullet}(A/\ia, M) \label{cartorsbis}\\
 \SL'_Y(M)     &= \dirlim{\ia \in \SJ_Y}\R\Hom^{\bullet}(\ia, M). \label{carlocbis}
\end{align}
analogues to \eqref{cartors} and \eqref{carloc}, respectively. As before, there is no need to derive the colimits because the systems are filtered. To handle the homs one may replace $A/\ia$ and $\ia$ by $q$-projective resolutions. Alternatively, one might use a $q$-injective resolution for $M$.

By arguing as in Proposition \ref{gamistor} one can check that $\SGamma'_Y$ is an acyclization functor. Also, using arguments close to the proof of Theorem \ref{mainloc}, one can show that $\SL'_Y = \ell_Y$. The index category is simpler but the objects involved, $A/\ia $, are not perfect complexes in general. That is why we choose the present construction, in line with the considerations in \ref{smash}.

The underived version of this formula goes back to Gabriel. Formula \eqref{carlocbis} goes sometimes under the name ``Deligne's formula'' because this author uses it in a fundamental way in the appendix of \cite{RD}.
\end{cosa}

\section{Derivators and group cohomology}\label{seis}

 In this last paragraph we are going to retrieve some results on group cohomology using the language of derivators. We begin by recalling some basic definitions. Fix an underlying commutative ring $A$. In the classical case $A = \ZZ$, but here everything works the same with this generality. As we are interested in $A$-linear representations, our framework will be the derivator $\PDD_{\!\!A\md}$.

\begin{cosa} \textbf{$G$-modules and the classifying category of a group}.
 Let $G$ be a group, an $A$-linear representation of $G$ is an $A$-module $M$ together with a group homomorphism
\begin{equation}\label{act}
 \lambda \colon G \lto \aut_A(M)
\end{equation}
 Notice that this definition is equivalent to the more familiar point of view of an action
 \[
 G \times M \lto M
 \]
 imposing linearity conditions on the action. 
 
 A group $G$ has associated a category $\SB G$ with only one element $\star$ and such that $\Hom_{\SB G}(\star,\star) = G$. For an $A$-linear representation of $G$, $M$, we will denote 
 \[
 \widetilde{M} \colon \SB G \lto A\md
 \]
 the functor defined by the representation $(M, \lambda)$. From now on we will refer to a representation $(M, \lambda)$ by its underlying module $M$ and leave the homomorphisms implicit.
\end{cosa}

\begin{cosa}\textbf{Invariants and coinvariants}.
 Let $M$ be an $A$-linear representation of $G$. We define the $A$-module of invariants as
 \[
 M^G = \{m \in M \, / \, gm = m, \, \forall g \in G\}
 \]
and the $A$-module of coinvariants as
\[
M_G = \scalebox{1.2}{$\frac{M}{\langle gm - m  \, / \, g \in G, \, m \in M \rangle}$}\,.
\]
It is a nice exercise to check that
\begin{equation} \label{coinvcolim}
 M^G = \invlim{\SB G}\widetilde{M} \qquad M_G = \dirlim{\SB G}\widetilde{M}
\end{equation}
\end{cosa}

\begin{cosa} \textbf{Complexes of representations}.
 A complex of representations may be viewed as a functor
 \[
 \widetilde{M} \colon \SB G \lto \CCC(A\md)
 \]
 which corresponds to a group homomorphism
\begin{equation*}
 G \lto \aut_{\CCC(A\md)}(M)
\end{equation*}
with $M \in \CCC(A\md)$. Notice that we may interpret $\widetilde{M} \in \CCC(A\md^{\SB G})$. In other words, an $A$-linear representation of $G$ (a module or, more generally, a complex) is just an element of $\PDC_{\!\!A\md}(\SB G)$.
\end{cosa}

\begin{cosa} \textbf{Derived representations}.
We define an $A$-linear \emph{derived representation} of $G$ as an element of $\PDD_{\!\!A\md}(\SB G)$. Every derived representation may be identified with a complex of representations.
\end{cosa}

\begin{cosa} \label{abssu} \textbf{The absolute setup}.
Let $c$ be the canonical functor $c \colon \SB G \to \es$. We recall from \ref{funnynot2} the chain of $\Delta$-functorial adjunctions
\[
\begin{tikzpicture}
      \node (G) {$\PDD_{\!\!A\md}(\SB G)$};  
      \node[node distance=3cm, right of = G] (H) {$\PDD_{\!\!A\md}(\es)$};
      \draw[->, bend left=30] (G) to node [above] {\footnotesize $\LL\shdirlim{}$} (H);
      \draw[->, bend right=30] (G) to node [below] {\footnotesize $\R\shinvlim{}$} (H);
      \draw[<-,] (G) to node [above] {\footnotesize $c^*$} (H);
\end{tikzpicture}
\]
where as before we denote $\LL\shdirlim{} = \LL{}c_!$ and $\R\shinvlim{} = \R{}c_*$.
\end{cosa}

\begin{prop} \label{homlim}
Let $M$ be an $A$-linear representation of $G$. We have canonical isomorphisms
\begin{align}
 \h_n(G,M) &\cong \h^{-n} \LL\!\!\dirlim{\SB G}\widetilde{M} \label{hog}\\
 \h^n(G,M) &\cong \h^{n} \R\!\!\invlim{\SB G}\widetilde{M} \label{cohog}
\end{align}
 for every $n \in \ZZ$.
\end{prop}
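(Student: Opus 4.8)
The plan is to identify the homotopy colimit and limit over $\SB G$ with the classical bar-resolution computations of group homology and cohomology. The key observation is that $\LL\shdirlim$ and $\R\shinvlim$ over $\SB G$ are, by Proposition \ref{der4} and the definition of derived Kan extensions, computed by genuinely deriving the ordinary colimit $\shdirlim_{\SB G} = (-)_G$ and limit $\shinvlim_{\SB G} = (-)^G$ (using the identifications in \eqref{coinvcolim}). So the content of the statement reduces to recognizing that these derived functors, applied to $\widetilde M$, recover $\h_*(G,M)$ and $\h^*(G,M)$.

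First I would fix a $q$-projective resolution $P \iso \widetilde M$ in $\PDC_{\!\!A}(\SB G) = \CCC(A\md^{\SB G})$; then $\LL\shdirlim_{\SB G}\widetilde M$ is represented by $(P)_G$ and $\R\shinvlim_{\SB G}\widetilde M$ by $(Q)^G$ for a $q$-injective resolution $Q$. The crucial point is that the category $A\md^{\SB G}$ is nothing but the category of $A[G]$-modules, and under this identification the functor $(-)_G$ is $A \otimes_{A[G]} (-)$ while $(-)^G$ is $\Hom_{A[G]}(A, -)$. Hence, for a module $M$ concentrated in degree $0$, the derived functors in question are exactly $\tor^{A[G]}_*(A, M)$ and $\ext_{A[G]}^*(A, M)$, which are by definition $\h_*(G,M)$ and $\h^*(G,M)$. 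The sign/degree bookkeeping — homological versus cohomological indexing — accounts for the $\h^{-n}$ on the right of \eqref{hog}, since a $q$-projective resolution of a module sits in nonpositive cohomological degrees.

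Concretely I would argue: take the standard free resolution of $A$ as a trivial $A[G]$-module (the bar resolution $B_\bullet G$), regarded cohomologically as a complex in nonpositive degrees; tensoring $\widetilde M$ with a $q$-projective replacement, or equivalently observing that $B_\bullet G \otimes_A \widetilde M$ is a $q$-projective resolution of $\widetilde M$ in $\CCC(A\md^{\SB G})$ when $\widetilde M$ is a complex of $A$-flat (e.g. projective) modules, and in general after a further resolution, one computes $\LL\shdirlim_{\SB G}\widetilde M \iso (B_\bullet G \otimes_{A[G]} \widetilde M)$, whose homology in degree $-n$ is $\h_n(G,M)$. The cohomological statement is dual, using $q$-injective resolutions and $\Hom_{A[G]}(B_\bullet G, \widetilde M)$, whose cohomology in degree $n$ is $\h^n(G,M)$. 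Naturality of all the identifications gives that the isomorphisms are canonical.

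The main obstacle, such as it is, is bookkeeping rather than conceptual: one must be careful that the paper's conventions place complexes with cohomological (upper) indexing and that a projective resolution of a module therefore lives in degrees $\le 0$, which is precisely what produces the shift $\h^{-n}$ in \eqref{hog}; and one should note that for an unbounded \emph{complex} of representations $\widetilde M$ the plain tensor $B_\bullet G \otimes_A \widetilde M$ need not already be $q$-projective, so strictly one invokes the existence of $q$-projective resolutions in $\CCC(A\md^{\SB G})$ (available since $A\md^{\SB G}$ is Grothendieck with a projective generator, as used throughout) together with the fact that $\tor$ and $\ext$ over $A[G]$ may be computed from either variable. Once these conventions are pinned down the proof is a direct translation.
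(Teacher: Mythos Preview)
Your proposal is correct and follows essentially the same route as the paper: both arguments rest on the identification \eqref{coinvcolim} of (co)invariants with the (co)limit over $\SB G$, so that $\LL\shdirlim_{\SB G}$ and $\R\shinvlim_{\SB G}$ are precisely the left and right derived functors of $(-)_G$ and $(-)^G$, which is the definition of group (co)homology. The paper's proof stops there in two sentences; your explicit identification with $\tor^{A[G]}$ and $\ext_{A[G]}$ via the bar resolution is a correct unpacking of the same statement but is not needed for the argument.
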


\begin{proof}
 Let us prove \eqref{hog}. The functor $\h_n(G,M)$ is the derived functor of the functor of coinvariants $M \rightsquigarrow M_G$, therefore both functors agree at level zero in view of \eqref{coinvcolim} so their derived functors are isomorphic. Similarly for \eqref{cohog} because in this case $\h^n(G,M)$ is the derived functor of the functor of invariants $M \rightsquigarrow M^G$.
\end{proof}

\begin{cosa} \textbf{Induced representations}.
Let $\phi \colon H \to G$ be any group homomorphism. Classical Frobenius reciprocity, extended to the complex context, asserts the existence of adjoints
\[
\begin{tikzpicture}
      \node (G) {$\CCC(A\md^{\SB G})$};  
      \node[node distance=4cm, right of = G] (H) {$\CCC(A\md^{\SB H})$};
      \draw[<-, bend left=30] (G) to node [above] {\footnotesize $\mathrm{Ind}^G_H$} (H);
      \draw[<-, bend right=30] (G) to node [below] {\footnotesize $\mathrm{CoInd}^G_H$} (H);
      \draw[->] (G) to node [above] {\footnotesize $\phi^\#$} (H);
\end{tikzpicture}
\]
where $\phi^\#$ is the usual restriction (or forgetful) functor (as in \cite[Definition 6.7.1]{W}) and $\mathrm{Ind}^G_H$ and $\textrm{CoInd}^G_H$ are usually called induction and coinduction functors, respectively.

Associated to $\phi \colon H \to G$, we have the functor $\SB\phi \colon \SB H \to \SB G$. With this notation, $\phi^\#$ is identified (at the level of complexes) with
\[
\SB\phi^* \colon \CCC(A\md^{\SB G}) \lto \CCC(A\md^{\SB H}).
\]
\end{cosa}

\begin{prop} \label{derfrobrep}
 (Derived Frobenius reciprocity). Let $\phi \colon H \to G$ be a group homomorphism. In the derivator $\PDD_{\!\!A\md}$ there is a chain of adjunctions
 \[
\begin{tikzpicture}
      \node (G) {$\PDD_{\!\!A\md}(\SB G)$};  
      \node[node distance=3cm, right of = G] (H) {$\PDD_{\!\!A\md}(\SB H)$};
      \draw[<-, bend left=30] (G) to node [above] {\footnotesize $\LL\SB\phi_!$} (H);
      \draw[<-, bend right=30] (G) to node [below] {\footnotesize $\R\SB\phi_*$} (H);
      \draw[->] (G) to node [above] {\footnotesize $\SB\phi^*$} (H);
\end{tikzpicture}
\]
where $\SB\phi^* = \phi^\#$, $\LL\SB\phi_! = \LL\mathrm{Ind}^G_H$ and $\R\SB\phi_* = \R\mathrm{CoInd}^G_H$.
\end{prop}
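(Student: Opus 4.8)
The plan is to read this statement off directly from axiom (Der3). Applying Proposition \ref{der3} to the functor $\SB\phi \colon \SB H \to \SB G$ of small categories induced by $\phi$ gives, with no further effort, a chain of $\Delta$-adjunctions $\LL\SB\phi_! \dashv \SB\phi^* \dashv \R\SB\phi_*$ between $\PDD_{\!\!A}(\SB H)$ and $\PDD_{\!\!A}(\SB G)$. So the only thing left is to identify the three functors with restriction, induction and coinduction of complexes of representations. For the middle one this is immediate and was already recorded just before the statement: $\SB\phi^*$ is precomposition with $\SB\phi$, which is exactly the restriction functor $\phi^\#$.

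Next I would identify the two outer functors at the \emph{underived} level. By \ref{expdesadj} the functor $\SB\phi^*$ on categories of complexes carries a left adjoint $\SB\phi_!$ and a right adjoint $\SB\phi_*$, computed as left and right Kan extensions along $\SB\phi$. On the other hand, Frobenius reciprocity extended to complexes (the chain of adjoints $\mathrm{Ind}^G_H \dashv \phi^\# \dashv \mathrm{CoInd}^G_H$ recalled above) exhibits $\mathrm{Ind}^G_H$ as a left adjoint and $\mathrm{CoInd}^G_H$ as a right adjoint of $\phi^\# = \SB\phi^*$. Uniqueness of adjoints then furnishes canonical natural isomorphisms $\SB\phi_! \cong \mathrm{Ind}^G_H$ and $\SB\phi_* \cong \mathrm{CoInd}^G_H$ of functors $\CCC(A\md^{\SB H}) \to \CCC(A\md^{\SB G})$. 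If one prefers an explicit verification to invoking uniqueness, one can instead compute the Kan extensions along $\SB\phi$ through the slice category $\SB\phi/\star$, whose set of objects is $G$ with its natural left $H$-action, and recover $A[G]\otimes_{A[H]}(-)$ and $\Hom_{A[H]}(A[G],-)$ on the nose.

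Finally I would pass to derived functors. The derived functors $\LL\SB\phi_!$ and $\R\SB\phi_*$ appearing in Proposition \ref{der3} are computed through $q$-projective and $q$-injective resolutions in $\PDC_{\!\!A}(\SB H)$, whose existence is guaranteed by the material of Section \ref{dos}; since the underived functors agree with $\mathrm{Ind}^G_H$ and $\mathrm{CoInd}^G_H$, the same resolutions compute $\LL\mathrm{Ind}^G_H$ and $\R\mathrm{CoInd}^G_H$, so these derived functors coincide as well. Substituting the identifications into the chain of Proposition \ref{der3} yields the asserted adjunctions. I do not expect a genuine obstacle here: all of the substance lives in (Der3), and the remainder is bookkeeping. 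The one point that merits a word of care is that $\mathrm{Ind}^G_H$ and $\mathrm{CoInd}^G_H$ need not be exact, so deriving them is essential — but this is precisely what the derivator structure of Sections \ref{dos}--\ref{tres} already provides.
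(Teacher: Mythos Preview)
Your proposal is correct and follows essentially the same route as the paper: invoke (Der3) for the functor $\SB\phi$ to get the chain of adjunctions, observe that $\SB\phi^* = \phi^\#$ is exact and hence descends to derived categories, and then use uniqueness of adjoints to identify $\LL\SB\phi_!$ and $\R\SB\phi_*$ with the derived induction and coinduction. The paper's proof is simply a two-line compression of what you wrote; your added detail on the underived identification via Kan extensions and the slice $\SB\phi/\star$ is extra bookkeeping, not a different argument.
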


\begin{proof}
 Notice that $\SB\phi^*$ is an exact functor, therefore it induces a functor between derived categories. The rest is a consequence of Der3 (Proposition \ref{der3}) and the uniqueness of adjoints.
\end{proof}

\begin{cosa} \textbf{Derived Shapiro's Lemma}.
 We set the following notation. Let $H$ be a subgroup of a group $G$. Denote by $\iota \colon H \inc G$ the inclusion homomorphism. Also, let $c_G \colon \SB G \to \es$ and $c_H \colon \SB H \to \es$ be the canonical functors. 
\end{cosa}

\begin{prop} \label{DerSha}
 Let $\widetilde{M} \in \PDD_{\!\!A\md}(\SB H)$ be a derived representation. We have natural isomorphisms
\begin{align}
 \LL\!\!\dirlim{\SB G} \SB\iota_!\widetilde{M}  &\cong \LL\!\!\dirlim{\SB H}\widetilde{M} \label{HomSha}\\
 \R \!\!\invlim{\SB G} \SB\iota_*\widetilde{M}  &\cong \R\!\!\invlim{\SB H}\widetilde{M}  \label{CohSha}
\end{align}
\end{prop}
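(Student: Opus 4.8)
The plan is to realize both isomorphisms as instances of the Beck--Chevalley morphisms of Proposition \ref{der4}, applied to the canonical functors and the functor $\SB\iota \colon \SB H \to \SB G$ induced by the inclusion $\iota \colon H \inc G$. First I would set up the relevant square: since $c_G \circ \SB\iota = c_H$ (both $\SB G$ and $\SB H$ have a single object, and composing with the projection to $\es$ is forced), we obtain a strictly commutative square
\[
\begin{tikzpicture}[baseline=(current bounding box.center)]
\matrix (m) [matrix of math nodes, row sep=3em, column sep=3em, text height=1.5ex, text depth=0.25ex]{
\SB H  & \es \\
\SB G  & \es \\
};
\path[->,font=\scriptsize,>=angle 90]
(m-1-1) edge node[above] {$c_H$} (m-1-2)
        edge node[left]  {$\SB\iota$} (m-2-1)
(m-1-2) edge node[right] {$\iid$} (m-2-2)
(m-2-1) edge node[below] {$c_G$} (m-2-2);
\end{tikzpicture}
\]
For \eqref{HomSha} I would observe that this is precisely the comma-square of Proposition \ref{der4} for the functor $u = c_G \colon \SB G \to \es$ and the unique object $\star \in \es$: the slice category $\SB G/\star$ is canonically isomorphic to $\SB G$ itself (every object of $\es$ is terminal, so the structural arrows carry no data), and under this identification $\pi_\star$ becomes $\iid_{\SB G}$ and $p$ becomes $c_G$. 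But I need the square with $\SB\iota$ in the left column, not $\iid_{\SB G}$; so instead I would apply the homotopy-exactness criterion directly. The cleaner route is: the left-hand adjunction in \ref{abssu} gives $\LL\shdirlim_{\SB G} = \LL (c_G)_!$, and by the 2-functoriality of $(-)_!$ (composition of left Kan extensions, which holds on the derived level since $\SB\iota^*$ preserves $q$-projectives by Lemma \ref{qconserv} applied to a suitable slice — or simply because $\LL(c_G)_! \circ \LL\SB\iota_! \cong \LL(c_G \circ \SB\iota)_!$ by uniqueness of adjoints, being left adjoint to $\SB\iota^* c_G^* = c_H^*$) one gets
\[
\LL\!\!\dirlim{\SB G}\SB\iota_!\widetilde M \;=\; \LL(c_G)_!\,\LL\SB\iota_!\,\widetilde M \;\cong\; \LL(c_G\SB\iota)_!\,\widetilde M \;=\; \LL(c_H)_!\,\widetilde M \;=\; \LL\!\!\dirlim{\SB H}\widetilde M,
\]
which is exactly \eqref{HomSha}. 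Here I would invoke Proposition \ref{der3} (and the $\Delta$-functoriality recorded there) to know that $\LL\SB\iota_! \dashv \SB\iota^*$ and $\LL(c_G)_!\dashv c_G^*$ exist, and then the composite $\LL(c_G)_!\circ\LL\SB\iota_!$ is left adjoint to $c_G^*\circ\SB\iota^* = (\SB\iota\circ c_G^{\vee})^*$... more precisely to $\SB\iota^*c_G^*=(c_G\SB\iota)^*=c_H^*$, forcing the isomorphism with $\LL(c_H)_!$ by uniqueness of left adjoints.

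For \eqref{CohSha} the argument is formally dual: from the right-hand adjunction $\R\shinvlim_{\SB G} = \R(c_G)_*$ and the fact that $\R(c_G)_*\circ\R\SB\iota_*$ is right adjoint to $\SB\iota^*c_G^* = c_H^*$, uniqueness of right adjoints gives $\R(c_G)_*\,\R\SB\iota_* \cong \R(c_H)_*$, hence
\[
\R\!\!\invlim{\SB G}\SB\iota_*\widetilde M \;=\; \R(c_G)_*\,\R\SB\iota_*\,\widetilde M \;\cong\; \R(c_H)_*\,\widetilde M \;=\; \R\!\!\invlim{\SB H}\widetilde M .
\]
I expect the main obstacle to be purely bookkeeping: making sure the composition-of-derived-functors isomorphism $\LL(c_G)_!\circ\LL\SB\iota_!\cong\LL(c_G\SB\iota)_!$ is legitimate on the nose. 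The safe way to guarantee this is either (a) to note that $\SB\iota^*$ sends $q$-projective objects of $\PDC_A(\SB G)$ to $q$-projective objects of $\PDC_A(\SB H)$ — which one can check because $\SB\iota^*$ restricted to a slice is, up to the cofinality argument of Lemma \ref{qconserv}, a product of copies of the underlying complex, explicitly $\SB\iota^*\widetilde M$ is, as a complex of $A$-modules, just $M$ with the restricted $H$-action and $q$-projectivity over $\SB H$ reduces to $q$-projectivity of the underlying complex — so that $\LL(c_G)_!(\LL\SB\iota_!P) = (c_G)_!\SB\iota_!P = (c_G\SB\iota)_!P$ for $P$ a $q$-projective resolution of $\widetilde M$; or (b) to bypass resolutions entirely and argue only with adjoint uniqueness as above, which I find preferable since it needs no preservation statement at all. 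I would write the proof via route (b), remarking in one line that the composites of the derived functors are again adjoint pairs (their adjointness being inherited from Proposition \ref{der3} by composing units and counits), and that $c_G\circ\SB\iota = c_H$ on the nose.
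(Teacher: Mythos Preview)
Your route (b) is correct: since $\LL(c_G)_! \dashv c_G^*$ and $\LL\SB\iota_! \dashv \SB\iota^*$ by Proposition~\ref{der3}, the composite $\LL(c_G)_!\circ\LL\SB\iota_!$ is left adjoint to $\SB\iota^* c_G^* = (c_G\SB\iota)^* = c_H^*$, hence naturally isomorphic to $\LL(c_H)_!$ by uniqueness of adjoints; the dual argument gives the second isomorphism. This is, however, a genuinely different route from the paper's. The paper instead records the underived identity $c_{G!}\,\SB\iota_! = c_{H!}$ (composition of Kan extensions) and then argues that $\SB\iota_!$ preserves $q$-projectives and $\SB\iota_*$ preserves $q$-injectives, because $\SB\iota^*$ is exact and sits as the \emph{right} adjoint of the former and the \emph{left} adjoint of the latter; hence a single resolution computes both sides at once. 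Amusingly, your adjoint-uniqueness argument is precisely the strategy the paper deploys later for the derived Lyndon--Hochschild--Serre isomorphism (Proposition~\ref{derLHS}), so you have in effect anticipated that proof. Your approach is slicker and needs no preservation statement; the paper's is closer to the classical resolution-level argument and makes explicit why $\SB\iota_!$ and $\SB\iota_*$ require no derived correction. One small correction to your aside in route (a): the preservation statement needed to splice $\LL(c_G)_!$ with $\LL\SB\iota_!$ at the level of resolutions is that $\SB\iota_!$ (not $\SB\iota^*$) takes $q$-projectives to $q$-projectives, and this follows directly from exactness of its right adjoint $\SB\iota^*$, not from a slice/cofinality computation.
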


\begin{proof}
Notice that $(-)^*$ makes part of a 2-functor, therefore the adjoint constructions $\LL(-)_!$ and $\R(-)_*$ are 2-functors, so we have
\begin{align*}
\LL\!\!\dirlim{\SB G} \LL\SB\iota_!\widetilde{M} &\cong \LL\!\!\dirlim{\SB H}\widetilde{M}\\
\R \!\!\invlim{\SB G} \R\SB\iota_*\widetilde{M}  &\cong \R\!\!\invlim{\SB H}\widetilde{M}  
\end{align*}
Moreover, $\SB\iota_!$ and $\SB\iota_*$ are exact functors because $\SB \iota \colon \SB H \to \SB G$ is both a sieve and a cosieve and arguing as in \cite[Corollary 3.8]{gr13} we see that both $\SB\iota_!$  and  $\SB\iota_*$ have adjoints on both sides, therefore they are exact functors and there is no need to derive them.
\end{proof}

\begin{cor}
Let $M$ be an $A$-linear representation of $H$. There are canonical isomorphisms for every $n \in \ZZ$
\begin{align*}
 \h_n(G,\mathrm{Ind}^G_H(M))    &\cong \h_n(H,M) \\
 \h^n(G,\mathrm{CoInd}^G_H(M))  &\cong \h^n(H,M).
\end{align*}
\end{cor}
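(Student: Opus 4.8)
The plan is to deduce this Corollary directly from Proposition \ref{DerSha} (derived Shapiro's Lemma) combined with the computation of group (co)homology as (co)homology of derived (co)limits established in Proposition \ref{homlim}. First I would record the identification $\mathrm{Ind}^G_H = \SB\iota_!$ and $\mathrm{CoInd}^G_H = \SB\iota_*$ at the level of complexes, and note (as in Proposition \ref{derfrobrep}) that the derived versions are $\LL\mathrm{Ind}^G_H = \LL\SB\iota_!$ and $\R\mathrm{CoInd}^G_H = \R\SB\iota_*$; in particular, since $\SB\iota_!$ preserves $q$-projectives and $\SB\iota_*$ preserves $q$-injectives (the key point already used in the proof of \ref{DerSha}), applying these functors to a representation $M$ viewed in $\PDD_{\!\!A}(\SB H)$ computes the honest induced/coinduced representation in $\PDD_{\!\!A}(\SB G)$.

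Next I would apply Proposition \ref{homlim} to the group $G$ and the representation $\mathrm{Ind}^G_H(M)$: for every $n \in \ZZ$,
\[
\h_n(G, \mathrm{Ind}^G_H(M)) \cong \h^{-n} \LL\!\!\dirlim{\SB G} \widetilde{\mathrm{Ind}^G_H(M)} \cong \h^{-n} \LL\!\!\dirlim{\SB G} \SB\iota_! \widetilde{M},
\]
where the last isomorphism uses the identification of $\mathrm{Ind}^G_H$ with $\SB\iota_!$ together with the $q$-projective preservation just mentioned (so that $\LL\SB\iota_!\widetilde{M}$ is represented by $\SB\iota_!$ applied to a $q$-projective resolution of $\widetilde{M}$). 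Now the isomorphism \eqref{HomSha} of Proposition \ref{DerSha} gives $\LL\dirlim{\SB G} \SB\iota_!\widetilde{M} \cong \LL\dirlim{\SB H}\widetilde{M}$ in $\D(A)$, so taking $\h^{-n}$ and applying \eqref{hog} of Proposition \ref{homlim} in reverse yields $\h_n(G, \mathrm{Ind}^G_H(M)) \cong \h_n(H, M)$. The cohomological statement is entirely dual: apply \eqref{cohog} to $\h^n(G, \mathrm{CoInd}^G_H(M))$, rewrite $\R\mathrm{CoInd}^G_H = \R\SB\iota_*$ using the preservation of $q$-injectives, invoke \eqref{CohSha}, and conclude via \eqref{cohog} for $H$.

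I would then simply remark that all isomorphisms constructed are natural in $M$, being composites of natural isomorphisms. There is essentially no obstacle here: the Corollary is a formal consequence of results already proved, and the only thing to be slightly careful about is the bookkeeping that lets one pass between the derived functor $\LL\SB\iota_!$ (resp. $\R\SB\iota_*$) on $\PDD_{\!\!A}$ and the plain functor $\SB\iota_!$ (resp. $\SB\iota_*$) applied to a suitable resolution — which is exactly the content of the $q$-projective/$q$-injective preservation used in the proof of Proposition \ref{DerSha}. One may also phrase the whole argument more slickly by noting that the composite derivator functor $\LL{c_G}_! \circ \LL\SB\iota_! \cong \LL{c_H}_!$ by $2$-functoriality of $(-)_!$ (since $c_G \circ \SB\iota = c_H$), and dually for $(-)_*$, which makes the naturality transparent.
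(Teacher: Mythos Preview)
Your proposal is correct and follows essentially the same approach as the paper's proof: both combine the identifications $\mathrm{Ind}^G_H = \SB\iota_!$, $\mathrm{CoInd}^G_H = \SB\iota_*$ from Proposition~\ref{derfrobrep}, the derived Shapiro isomorphisms \eqref{HomSha} and \eqref{CohSha} of Proposition~\ref{DerSha}, and the computation of group (co)homology via derived (co)limits in Proposition~\ref{homlim}. Your write-up is more detailed---in particular you spell out the bookkeeping with $q$-projective/$q$-injective preservation and mention the alternative via $2$-functoriality---but the core argument is identical to the paper's two-line proof.
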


\begin{proof}
Consider the identifications $\SB\iota_! = \mathrm{Ind}^G_H$ and $\SB\iota_* = \mathrm{CoInd}^G_H$. Using these, in view of Proposition \ref{homlim}, combine Proposition \ref{derfrobrep} with \eqref{HomSha} for the first isomorphism, and with \eqref{CohSha} for the second.
\end{proof}

\begin{rem}
 Compare with \cite[\S 6.3]{W}.
\end{rem}

\begin{cosa} \textbf{The case of a normal subgroup.}
 We will see how to derive the classic Lyndon-Hochschild-Serre spectral sequence in our setup. For the rest of the section, we fix the following notation. Let $H$ be a normal subgroup of a group $G$ with $\iota \colon H \inc G$ the inclusion monomorphism. Let $m \colon G \to G/H$ be the canonical epimorphism. 
\end{cosa}

\begin{prop} \label{derLHS}
  (Derived Lyndon-Hochschild-Serre). In the previous setting, for a derived representation $\widetilde{M} \in \PDD_{\!\!\SA }(\SB G)$ there are isomorphisms of functors
\begin{align}
 \LL\!\!\dirlim{\SB \scalebox{0.7}{$\frac{G}{H}$}} \LL{}\SB m_!\widetilde{M}  
                      &\cong \LL\!\!\dirlim{\SB G}\widetilde{M} \label{HomLHS}\\
  \R\!\!\invlim{\SB \scalebox{0.7}{$\frac{G}{H}$}} \R{}\SB m_*\widetilde{M}  
                      &\cong \R\!\!\invlim{\SB G}\widetilde{M} \label{CohLHS}
\end{align}
\end{prop}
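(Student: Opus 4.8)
The plan is to read \eqref{HomLHS} and \eqref{CohLHS} as the derived incarnation of the pseudofunctoriality of $(-)_!$ and $(-)_*$ along a factorization of the structural functor $c_G\colon\SB G\to\es$. Since both $c_G$ and $c_{G/H}\circ\SB m$ are functors from $\SB G$ to the terminal category, they coincide: $c_G=c_{G/H}\circ\SB m$. Hence $c_G^*=\SB m^*\circ c_{G/H}^*$ strictly, and uniqueness of adjoints together with Proposition \ref{der3} yields canonical isomorphisms of underived functors $(c_{G/H})_!\circ\SB m_!\cong(c_G)_!$ and $(c_{G/H})_*\circ\SB m_*\cong(c_G)_*$. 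What remains is to lift these to the level of derived functors.

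The only non-formal point is that $\SB m_!$ preserves $q$-projective objects and, dually, $\SB m_*$ preserves $q$-injective objects; this goes exactly as in Lemma \ref{qconserv} and Proposition \ref{DerSha}. The functor $\SB m^*\colon\PDC_\SA(\SB(G/H))\to\PDC_\SA(\SB G)$ is precomposition with $\SB m$, hence exact. Being the right adjoint of $\SB m_!$, its exactness forces $\SB m_!$ to carry $q$-projectives to $q$-projectives: for a $q$-projective $P$ and an acyclic complex $E$ over $\SB(G/H)$ one has $\Hom^{\bullet}(\SB m_!P,E)\cong\Hom^{\bullet}(P,\SB m^*E)$, and the right-hand side is acyclic since $\SB m^*E$ is acyclic and $P$ is $q$-projective. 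Being also the left adjoint of $\SB m_*$, the same exactness makes $\SB m_*$ preserve $q$-injectives via the dual computation $\Hom^{\bullet}(E,\SB m_*I)\cong\Hom^{\bullet}(\SB m^*E,I)$.

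With this in hand the argument closes. For $\widetilde{M}\in\PDD_\SA(\SB G)$ choose a $q$-projective resolution $P\iso\widetilde{M}$ in $\PDC_\SA(\SB G)$; then $\LL\SB m_!\widetilde{M}\cong\SB m_!P$ is again $q$-projective, so $\LL(c_{G/H})_!$ may be computed on it without a further resolution, and by the underived isomorphism above
\[
\LL\!\!\dirlim{\SB(G/H)}\LL\SB m_!\widetilde{M}\ \cong\ (c_{G/H})_!\,\SB m_!\,P\ \cong\ (c_G)_!\,P\ \cong\ \LL\!\!\dirlim{\SB G}\widetilde{M},
\]
which is \eqref{HomLHS}. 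Dually, a $q$-injective resolution $\widetilde{M}\iso I$ gives $\R\SB m_*\widetilde{M}\cong\SB m_*I$, again $q$-injective, whence
\[
\R\!\!\invlim{\SB(G/H)}\R\SB m_*\widetilde{M}\ \cong\ (c_{G/H})_*\,\SB m_*\,I\ \cong\ (c_G)_*\,I\ \cong\ \R\!\!\invlim{\SB G}\widetilde{M},
\]
which is \eqref{CohLHS}. Every isomorphism used is canonical, hence natural in $\widetilde{M}$, so we obtain the asserted isomorphisms of functors. I do not expect any serious obstacle: the statement is the combination of the preservation lemma for $\SB m_!$ and $\SB m_*$ — a one-line adjunction argument — with the pseudofunctoriality of $(-)_!$ and $(-)_*$ already built into the derivator structure; the only mild care needed is in recording the naturality of the composite derived-functor isomorphisms.
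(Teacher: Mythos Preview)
Your proof is correct and rests on the same key observation as the paper's: the factorization $c_G = c_{G/H}\circ\SB m$, from which composition of the derived left (resp.\ right) adjoints yields the result. The paper dispatches the composition step in one line by invoking that $\LL(-)_!$ and $\R(-)_*$ are 2-functors (being adjoint to the strict 2-functor $(-)^*$), whereas you unpack this by proving directly that $\SB m_!$ preserves $q$-projectives and $\SB m_*$ preserves $q$-injectives via the exactness of $\SB m^*$, and then compute with explicit resolutions. Your route is more elementary and self-contained --- it does not presuppose the reader knows why adjoints of 2-functors are pseudofunctorial --- while the paper's is shorter and emphasizes that the statement is purely formal once the derivator structure is in place; both reach the same destination by the same underlying idea.
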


\begin{proof}
As in \ref{abssu}, denote by $c_G \colon \SB G \to \es$  and $c_{\scalebox{0.7}{$\frac{G}{H}$}} \colon \SB \scalebox{0.9}{$\frac{G}{H}$} \to \es$ the canonical functors. Let us consider also the functor $\SB m \colon \SB G \to \SB \scalebox{0.9}{$\frac{G}{H}$}$. It is clear that $c_{\scalebox{0.7}{$\frac{G}{H}$}} \circ \SB m = c_G$. As before, the fact that the $(-)^*$ makes part of a 2-functor, forces the adjoint constructions $\LL(-)_!$ and $\R(-)_*$ to be 2-functors, therefore we have
\begin{align*}
 \LL c_{{\scalebox{0.7}{$\frac{G}{H}$}} !} \LL{}\SB m_! &\cong \LL c_{G !} \\
  \R c_{{\scalebox{0.7}{$\frac{G}{H}$}} *} \R{}\SB m_*  &\cong \R c_{G *}
\end{align*}
and the results follow by applying these natural isomorphisms to $\widetilde{M}$. 
\end{proof}

For the derivation of the classic version of Lyndon-Hochschild-Serre theorem, we will decompose the argument into several steps. We will start with a couple of Lemmas.

\begin{lem}\label{preLHS}
Let $\SB G/\star_{G/H}$ and $\star_{G/H}\!\backslash \SB G$ be the slice and coslice categories, respectively, relative to the functor $\SB m \colon \SB G \to \SB \scalebox{0.9}{$\frac{G}{H}$}$. Let $\pi \colon \SB G/\star_{G/H} \to \SB G$ and $\varpi \colon \star_{G/H}\!\backslash \SB G \to \SB G$ denote the canonical functors as in \ref{expdesadj}. There are equivalences of categories
\begin{align*}
 \nu \colon  \SB H &\liso \SB G/\star_{G/H}\\
  \mu \colon  \SB H    &\liso \star_{G/H}\!\backslash \SB G
\end{align*}
 such that $\pi \circ \nu = \SB \iota$ and $\varpi \circ \mu = \SB \iota$, respectively.
\end{lem}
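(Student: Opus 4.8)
The plan is to make both comma categories completely explicit and then read off the equivalences. First I would unwind the definitions of \ref{expdesadj}. Since both $\SB G$ and $\SB(G/H)$ have a single object, an object of the slice category $\SB G/\star_{G/H}$ is a coset $\bar g \in G/H = \Hom_{\SB(G/H)}(\star_{G/H},\star_{G/H})$, and an arrow $\bar g \to \bar{g'}$ is an element $x \in G$ such that $\bar{g'}\,\overline{x} = \bar g$ in $G/H$, \ie such that $xH = g'^{-1}gH$; in particular $\Hom(\bar e,\bar e)$ is exactly $H$, with composition the multiplication of $G$. Dually, an object of the coslice $\star_{G/H}\backslash\SB G$ is again a coset $\bar g$, an arrow $\bar g\to\bar{g'}$ being an $x\in G$ with $\overline x\,\bar g=\bar{g'}$, \ie $xH=g'Hg^{-1}$, and here too $\Hom(\bar e,\bar e)=H$. (Normality of $H$ underlies the whole construction: it is precisely what makes $G/H$ a group, hence $\SB(G/H)$ and $\SB m$ available, and with it in hand all of the above is well posed.)

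Next I would define $\lambda\colon\SB H\to\SB G/\star_{G/H}$ by sending the unique object to $\bar e$ and a morphism $h\in H$ to $h\in\Hom(\bar e,\bar e)=H$; by the previous paragraph this is literally the identity homomorphism on the relevant endomorphism groups, hence a functor, and since $\pi$ forgets the coset datum one gets $\pi\circ\lambda=\SB\iota$ on the nose. Symmetrically, $\mu\colon\SB H\to\star_{G/H}\backslash\SB G$ sends the object to $\bar e$ and $h$ to $h$, and $\varpi\circ\mu=\SB\iota$.

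It remains to check that $\lambda$ --- and dually $\mu$ --- is an equivalence. Full faithfulness is immediate, since on endomorphism groups $\lambda$ is the identity $H=\Hom_{\SB H}(\star,\star)\to\Hom(\bar e,\bar e)=H$. For essential surjectivity I would first note that $\SB G/\star_{G/H}$ is a groupoid: in any comma category $(\SB m\downarrow\star_{G/H})$ a morphism coming from $x\in G$ has an inverse coming from $x^{-1}$, because $\SB G$ is a groupoid. It is moreover connected, since for each coset $\bar g$ the element $g^{-1}$ defines an arrow $\bar e\to\bar g$ (its defining condition reading $xH=g^{-1}H$), which is therefore an isomorphism; hence every object is isomorphic to $\lambda(\star)=\bar e$. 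The coslice case is entirely parallel, with $g$ in place of $g^{-1}$. I do not anticipate a genuine obstacle here; the only thing requiring care is the bookkeeping of the composition conventions in $\SB G$ and $\SB(G/H)$, so that the coset descriptions of the hom-sets and the equalities $\pi\circ\lambda=\SB\iota$ and $\varpi\circ\mu=\SB\iota$ come out on the nose rather than merely up to coherent isomorphism.
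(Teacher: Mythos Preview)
Your proof is correct and follows essentially the same approach as the paper's: both unwind the comma categories explicitly, define $\lambda$ and $\mu$ by sending the unique object to the identity coset and acting as the identity on endomorphisms, verify full faithfulness from the identification $\Hom(\bar e,\bar e)=H$, and obtain essential surjectivity from the fact that the slice/coslice is a connected groupoid. If anything, your description is slightly more precise than the paper's, which loosely says the objects of $\SB G/\star_{G/H}$ are ``the elements of $G$'' rather than cosets in $G/H$; your version is the accurate one, though the distinction is immaterial since all objects are isomorphic.
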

 
\begin{proof}
Let us start by describing the category $\SB G/\star_{G/H}$. Its objects are just the elements of $G$. The set of maps between $g, g' \in G$ are those $h \in H$ such that $hg' = g$ which is equivalent to  $g$ and $g'$ having the same class in $G/H$. In other words, $\Hom_{\SB G/\star_{G/H}}(g, g') = H = \aut_{\SB H}(\star)$. 

The functor $\nu$ takes $\star \in \SB H$ to the neutral element $1$ considered as an object in $\SB G/\star_{G/H}$. It follows that $\nu$ is fully faithful. But clearly all of the objects in $\SB G/\star_{G/H}$ are isomorphic, and this makes $\nu$ dense, whence an equivalence of categories as claimed. Finally the identification $\pi \circ \nu = \SB \iota$ is obvious at the level of objects and identifies the elements of $H$ as maps of $\SB H$ with themselves, viewed as elements of $G$, so as maps in $\SB G$.

For $\star_{G/H}\!\backslash \SB G$ everything works the same, the only difference is that the set of maps between $g, g' \in G$, considered as objects of this category, are those $h \in H$ such that $g' = gh$ and the rest of the arguments carry over, including the fact that $\varpi \circ \mu = \SB \iota$.
\end{proof}

\begin{lem}\label{compcoh}
 Let $M$ be an $A$-linear representation of $G$ and $\widetilde{M} \in \PDD_{\!\!\SA }(\SB G)$ its corresponding derived representation. We have the following isomorphisms
 \[
 \h_q(H, M) \cong \h^{-q} \LL{}\SB m_! \widetilde{M} \qquad
 \h^q(H, M) \cong \h^q \R{}\SB m_*     \widetilde{M}
 \]
 where on the left hand sides of each isomorphism we consider $M$ as an $A$-linear representation of $H$.
\end{lem}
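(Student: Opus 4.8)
The plan is to reduce the computation of $\LL\SB m_!\widetilde M$ and $\R\SB m_*\widetilde M$ at the unique object $\star_{G/H}$ of $\SB\frac{G}{H}$ to a computation over $\SB H$ by means of the pointwise formulas for derived Kan extensions (Der4, Proposition \ref{der4}), and then to invoke Proposition \ref{homlim}. First I would apply the remark following Proposition \ref{der4}: for the functor $\SB m\colon \SB G\to\SB\frac{G}{H}$ and the unique object $\star_{G/H}$, one has
\[
\bigl(\LL\SB m_!\widetilde M\bigr)_{\star_{G/H}}\;\cong\;\LL\!\!\dirlim{\SB G/\star_{G/H}}\pi^*\widetilde M,
\qquad
\bigl(\R\SB m_*\widetilde M\bigr)_{\star_{G/H}}\;\cong\;\R\!\!\invlim{\star_{G/H}\backslash\SB G}\varpi^*\widetilde M,
\]
where $\pi$ and $\varpi$ are the canonical functors from the slice and coslice categories. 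Since $\SB\frac{G}{H}$ has a single object, evaluating at $\star_{G/H}$ loses no information, so these isomorphisms compute $\LL\SB m_!\widetilde M$ and $\R\SB m_*\widetilde M$ completely (as derived representations of $G/H$, but here we only need their underlying complexes).

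Next I would feed in Lemma \ref{preLHS}: the equivalences $\lambda\colon\SB H\iso\SB G/\star_{G/H}$ and $\mu\colon\SB H\iso\star_{G/H}\backslash\SB G$ satisfy $\pi\circ\lambda=\SB\iota$ and $\varpi\circ\mu=\SB\iota$. Because an equivalence of index categories induces an equivalence on diagram categories that is compatible with $\shdirlim$, $\shinvlim$ and their derived functors (pullback along an equivalence preserves $q$-projectives and $q$-injectives, being part of an adjoint equivalence), we get
\[
\LL\!\!\dirlim{\SB G/\star_{G/H}}\pi^*\widetilde M\;\cong\;\LL\!\!\dirlim{\SB H}\lambda^*\pi^*\widetilde M\;=\;\LL\!\!\dirlim{\SB H}\SB\iota^*\widetilde M,
\]
and dually $\R\invlim{\star_{G/H}\backslash\SB G}\varpi^*\widetilde M\cong\R\invlim{\SB H}\SB\iota^*\widetilde M$. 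Now $\SB\iota^*\widetilde M$ is precisely $\widetilde M$ regarded as a derived representation of $H$, i.e. $M$ restricted to $H$. Finally, applying Proposition \ref{homlim} (with $H$ in place of $G$) to identify $\h^{-q}\LL\dirlim{\SB H}\SB\iota^*\widetilde M\cong\h_q(H,M)$ and $\h^{q}\R\invlim{\SB H}\SB\iota^*\widetilde M\cong\h^{q}(H,M)$ yields the stated isomorphisms.

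The only genuinely delicate point is the compatibility of derived co/limits with the equivalence of Lemma \ref{preLHS} — more precisely, that $\lambda^*$ carries a $q$-projective resolution to a $q$-projective resolution so that $\LL\dirlim{}\circ\lambda^*\cong\lambda^*\circ\LL\dirlim{}$ in the appropriate sense, and the dual statement for $\mu^*$ and $q$-injectives. This follows because $\lambda$ being an equivalence makes $\lambda^*$ exact with exact adjoint $\lambda_!\cong\lambda_*$, so $\lambda^*$ preserves $q$-projectives and $q$-injectives, exactly as in the proof of Proposition \ref{DerSha}; alternatively one can observe directly that $\LL\dirlim{\SB G/\star_{G/H}}=\LL(\SB m)_{!}$ evaluated pointwise and that a cofinality-type argument reduces it to $\LL\dirlim{\SB H}$. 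Everything else is bookkeeping with the pointwise formula of Proposition \ref{der4} and the identification $\SB\iota^*\widetilde M = M|_H$.
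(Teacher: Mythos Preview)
Your proposal is correct and follows essentially the same route as the paper's proof: both reduce $\LL\SB m_!\widetilde M$ and $\R\SB m_*\widetilde M$ to derived co/limits over the slice and coslice categories relative to $\SB m$, invoke Lemma \ref{preLHS} to replace these by $\SB H$ via $\pi\circ\lambda=\SB\iota$ and $\varpi\circ\mu=\SB\iota$, and then apply Proposition \ref{homlim}. If anything, you are slightly more explicit than the paper in citing Der4 (Proposition \ref{der4}) for the pointwise formula and in justifying that the equivalence $\lambda$ preserves $q$-projectives; the paper simply appeals to the definition in \ref{expdesadj} and remarks that an equivalence of diagram shapes does not alter the co/limit.
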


\begin{proof}
Denote by $c_H \colon \SB H \to \es$ the canonical functor. By Proposition \ref{homlim} and by definition, respectively, we have that
\begin{align*}
 \h_q(H, M) &\cong \h^{-q} \LL{}c_{H !} \SB \iota^* \widetilde{M} 
             = \h^{-q} \LL\!\!\dirlim{\SB H} \SB \iota^* \widetilde{M} \\
 \h^q(H, M) &\cong \h^q \R{}c_{H !} \SB \iota^* \widetilde{M}
             = \h^q \R\!\!\invlim{\SB H} \SB \iota^* \widetilde{M} 
\end{align*}
On the other hand
\begin{align*}
 \LL{}\SB m_! \widetilde{M} 
          &= \LL{}\!\!\!\!\dirlim{\SB G/\star_{G/H}} (\widetilde{M} \circ \pi) 
                  \tag{by definition, see \ref{expdesadj}} \\
          &\cong \LL{}\!\!\dirlim{\SB H} (\widetilde{M} \circ \pi \circ \nu) 
                  \tag{Lemma \ref{preLHS}} \\
          &= \LL{}\!\!\dirlim{\SB H} (\widetilde{M} \circ \SB \iota) 
           = \LL{}\!\!\dirlim{\SB H} \SB \iota^* \widetilde{M}
\end{align*}
Analogously,
\begin{align*}
 \R{}\SB m_* \widetilde{M} 
          &= \R{}\!\!\!\!\invlim{\star_{G/H}\backslash\!\SB G} (\widetilde{M} \circ \varpi) 
                  \tag{by definition, see \ref{expdesadj}} \\
          &\cong \R{}\!\!\invlim{\SB H} (\widetilde{M} \circ \varpi \circ \mu) 
                  \tag{Lemma \ref{preLHS}} \\
          &= \R{}\!\!\invlim{\SB H} (\widetilde{M} \circ \SB \iota) 
           = \R{}\!\!\invlim{\SB H} \SB \iota^* \widetilde{M} 
\end{align*}
When invoking Lemma \ref{preLHS}, notice that an isomorphism of shape of diagrams does not alter neither the colimit nor the limit.
\end{proof}

\begin{thm}\label{LHSclas}
(Lyndon-Hochschild-Serre spectral sequence).
 Let $M$ be an $A$-linear representation of $G$. We have converging  spectral sequences
\begin{align*}
 E^2_{p q} = \h_p(G/H, \h_q(H, M))    &\imp \h_{p+q}(G,M) \\
 E_2^{p q} = \h^p(G/H, \h^q(H, M))    &\imp \h^{p+q}(G,M)  
\end{align*}
\end{thm}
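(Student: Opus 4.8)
The plan is to derive the Lyndon-Hochschild-Serre spectral sequence as the Grothendieck spectral sequence for the composition of derived functors already assembled in Proposition \ref{derLHS}, computing its $E_2$-page via Lemma \ref{compcoh} and Proposition \ref{homlim}. First I would recall that for the homological statement we work with the composite $\LL\shdirlim_{\SB(G/H)} \circ \LL{}\SB m_!$, which by Proposition \ref{derLHS} is isomorphic to $\LL\shdirlim_{\SB G}$, and for the cohomological statement with $\R\shinvlim_{\SB(G/H)} \circ \R{}\SB m_*$, isomorphic to $\R\shinvlim_{\SB G}$. Since $\PDD_{\!\!A}(\es) = \D(A)$ is the derived category of a Grothendieck category with enough projectives, it carries the standard cohomological machinery; in particular a composition of derived functors between derived categories of module categories gives rise to a convergent spectral sequence (the hypercohomology / Grothendieck spectral sequence), and convergence is guaranteed because all the complexes in sight are bounded above (for the homological version) respectively below (for the cohomological version) when one starts from a single module $M$ placed in degree $0$, so the relevant double complexes live in a half-plane.

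Next I would identify the terms. For the homological spectral sequence, the outer functor contributes $\h_p(G/H, -) = \h^{-p}\LL\shdirlim_{\SB(G/H)}(-)$ by Proposition \ref{homlim}, applied to the object $\LL{}\SB m_!\widetilde{M} \in \PDD_{\!\!A}(\SB(G/H))$. The homology of that object in degree $-q$ is, by Lemma \ref{compcoh}, exactly $\h_q(H,M)$, viewed as a representation of $G/H$ (one must note here that the $G/H$-action on $\h_q(H,M)$ is precisely the one carried by $\LL{}\SB m_!\widetilde{M}$ as a diagram over $\SB(G/H)$, which is the standard conjugation action; this is built into the identification in Lemma \ref{compcoh}). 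Feeding the "layers" $\h_q(H,M)$ through $\h_p(G/H,-)$ and using the isomorphism of Proposition \ref{derLHS} to identify the abutment with $\h^{-(p+q)}\LL\shdirlim_{\SB G}\widetilde{M} = \h_{p+q}(G,M)$ (again Proposition \ref{homlim}) yields the first spectral sequence
\[
E^2_{pq} = \h_p(G/H, \h_q(H,M)) \imp \h_{p+q}(G,M).
\]
The cohomological case is entirely dual: the outer functor gives $\h^p(G/H,-) = \h^p\R\shinvlim_{\SB(G/H)}(-)$, the inner object $\R{}\SB m_*\widetilde{M}$ has cohomology $\h^q(H,M)$ in degree $q$ by Lemma \ref{compcoh}, and the abutment is $\h^{p+q}(G,M)$ by Propositions \ref{homlim} and \ref{derLHS}.

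The one point that needs genuine care — the main obstacle — is producing the spectral sequence itself from the composition of two derived functors in the setting of unbounded derived categories. The classical Grothendieck spectral sequence is stated for left/right exact functors on abelian categories with enough injectives/projectives; here $\LL{}\SB m_!$ and $\R{}\SB m_*$ are honest derived functors of $\SB m_!$, $\SB m_*$ (which exist by the discussion before Proposition \ref{der3}, using $q$-projective and $q$-injective resolutions in $\PDC_{\!\!A}(\SB G)$), so I would phrase the argument as: choose a $q$-projective resolution $P \iso \widetilde{M}$ in $\PDC_{\!\!A}(\SB G)$, form $\SB m_!(P)$, filter it (stupid/brutal filtration) to get the standard hyperhomology spectral sequence whose $E^1$ or $E^2$ terms are the homology of $\LL{}\SB m_!$ applied degreewise, then apply $\LL\shdirlim_{\SB(G/H)}$; because $M$ is concentrated in degree $0$ the complex $\SB m_!(P)$ is concentrated in non-positive degrees after resolution, so the spectral sequence lives in a single quadrant and converges. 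Equivalently one invokes that $\D(A)$ together with these derived functors satisfies the hypotheses of the standard spectral sequence of a composite of triangulated functors bounded on one side (as in \cite{yellow}). Once convergence is granted, the identification of the $E_2$-page and abutment is just the bookkeeping of the previous paragraph, and I would present only that bookkeeping in detail, citing the spectral-sequence construction rather than redoing it.
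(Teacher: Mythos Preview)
Your proposal is correct and follows essentially the same route as the paper: invoke the Grothendieck/hyperhomology spectral sequence for the composite of derived functors established in Proposition~\ref{derLHS}, then identify the $E_2$-terms via Lemma~\ref{compcoh} and Proposition~\ref{homlim}. The paper simply cites \cite[III, Theorem 7.7]{gman} (and its dual) for the spectral sequence rather than spelling out the resolution-and-filtration construction you sketch, but the logical structure is identical.
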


\begin{proof}
By applying \cite[III, Theorem 7.7]{gman} (and its dual), we obtain at once the convergence of the spectral sequences
\begin{align*}
 E^2_{p q} = 
           \h^{-p}(\LL\!\!\dirlim{\SB \scalebox{0.7}{$\frac{G}{H}$}} \!\!
           \h^{-q}(\LL{}\SB m_! \widetilde{M}))  
           & \imp \h^{-(p+q)}(\LL\!\!\dirlim{\SB G}\widetilde{M}) \\
 E_2^{p q} = 
           \h^p(\R\!\!\invlim{\SB \scalebox{0.7}{$\frac{G}{H}$}} \!\!
           \h^q(\R{}\SB m_*\widetilde{M}))    
           & \imp \h^{p+q}(\R\!\!\invlim{\SB G}\widetilde{M})  
\end{align*}
We conclude by applying to the isomorphisms \eqref{HomLHS} and \eqref{CohLHS}, the computation in Lemma \ref{compcoh} combined with the isomorphisms \eqref{hog} and \eqref{cohog}.
\end{proof}

\begin{rem}
 The last theorem, which is a cornerstone in the theory of co/ho\-mo\-logy of groups follows simply from 2-functoriality. In the classical references, like \cite[6.8.2]{W}, the fact that a $G$-representation induces an $H$-representation goes implicit in the notation. This fact is made explicit in our approach by the appearance of the functor $\SB \iota^*$ and the identification of co/homology with respect to $H$ with the functors $\SB m_!$ and $\SB m_*$, respectively, via Lemma \ref{compcoh}.
\end{rem}

\begin{small}

\begin{ack}
 We thank the anonymous referees for their attentive reading and for suggesting several useful improvements.
\end{ack}

\begin{fund}
 This work has been partially supported by Spain's MINECO research project MTM2017-89830-P and Xunta de Galicia's ED431C 2019/10 both with E.U.'s FEDER funds. The second author is also partially supported by MCINN research project PID2020-115155GB-I00 with E.U.'s FEDER funds and by a MECD contract FPU 18/01203.
\end{fund}

%
%

\end{small}



\end{document}